\documentclass[11pt]{article}
\usepackage[a4paper,margin=28mm]{geometry}
\usepackage[T1]{fontenc}
\usepackage{lmodern,amsmath,amssymb,amsthm,mathrsfs,microtype}
\usepackage{xcolor}
\usepackage{float}
\usepackage{enumitem}
\usepackage{pgfplots}
\pgfplotsset{compat=1.18}
\definecolor{hestonsuper}{HTML}{0072B2}
\definecolor{hestoncritical}{HTML}{D55E00}
\usepackage[colorlinks=true,linkcolor=blue!50!black,citecolor=blue!50!black,urlcolor=blue!50!black]{hyperref}
\allowdisplaybreaks[2]
\numberwithin{equation}{section}
\newtheorem{theorem}{Theorem}[section]
\newtheorem{proposition}[theorem]{Proposition}
\newtheorem{lemma}[theorem]{Lemma}
\newtheorem{corollary}[theorem]{Corollary}
\theoremstyle{definition}

\theoremstyle{remark}

\newcommand{\R}{\mathbb R}
\newcommand{\E}{\mathbb E}
\newcommand{\PP}{\mathbb P}
\newcommand{\HH}{\mathbb H}
\newcommand{\CC}{\mathcal C}
\newcommand{\Lop}{\mathscr L}
\newcommand{\loc}{\mathrm{loc}}

\title{Sharp asymptotics for a free boundary touching a degenerate
boundary: the perpetual Heston put}
\author{Takwon Kim\thanks{School of Mathematics, Statistics and Data
Science, Sungshin Women's University, 2 Bomun-ro 34da-gil, Seongbuk-gu,
Seoul 02844, Republic of Korea. Email: takwon@sungshin.ac.kr.
ORCID: 0000-0002-2595-5319.}
\and Junkee Jeon\thanks{Department of Applied Mathematics, Kyung Hee
University, 1732 Deogyeong-daero, Giheung-gu, Yongin 17104, Republic of
Korea. Email: junkeejeon@khu.ac.kr. ORCID: 0000-0002-1091-9745.}
\and Jehan Oh\thanks{Corresponding author. Department of Mathematics,
Kyungpook National University, 80 Daehak-ro, Buk-gu, Daegu 41566,
Republic of Korea. Email: jehan.oh@knu.ac.kr.
ORCID: 0000-0001-9602-5111.}}
\date{}

\begin{document}
\maketitle

\begin{abstract}
We study the exercise boundary of the perpetual American put in the
Heston model near zero variance. There the pricing operator
degenerates, and the exercise boundary, a free boundary in the
stock--variance half-plane, touches the boundary of the half-plane.
No Feller condition is imposed. The boundary is analytic at positive
variance and extends as a regular $C^1$ curve tangent to the
zero-variance axis. Its approach to the axis follows one of three
laws, according to whether the Feller ratio $2\kappa\theta/\sigma^2$
is larger than, equal to, or smaller than one: a logarithmic correction
to linear order, an exponential correction in the square root of the
logarithm, or a fractional power. In the first two cases the leading
coefficients are explicit and the tangent is not H\"older continuous
at the axis. In the third case the exponent is selected by a possibly
nonsymmetric stable operator on the axis, and the amplitude is finite
and positive.
\end{abstract}

\noindent\textbf{Keywords.} Free boundary, obstacle problem, degenerate
elliptic equation, perpetual American put, Heston model, stable
operator.

\medskip
\noindent\textbf{2020 Mathematics Subject Classification.} Primary
35R35, 35J70; Secondary 35J86, 35B40, 60G40, 91G20.

\section{Introduction}

This paper concerns a free boundary that touches a boundary on which the
elliptic operator degenerates. The problem comes from the perpetual
American put in the Heston model. In the stock--variance half-plane the
exercise boundary is the free boundary of an obstacle problem, the
diffusion coefficient vanishes on the zero-variance axis, and the
variance has a positive inward drift there. We address three
questions: whether the exercise boundary has a nontrivial limit at
zero variance, what regularity the boundary curve has at that endpoint,
and what asymptotic law governs its approach. The last question leads
to three distinct regimes, with different rates and leading
coefficients. In one of them the rate is selected by a nonlocal
equation on the axis.

Let $(S,Y)$ solve
\begin{equation}\label{eq:sde}
\begin{aligned}
 dS_t&=(r-q)S_t dt+S_t\sqrt{Y_t} dB_t^1,\\
 dY_t&=\kappa(\theta-Y_t) dt+\sigma\sqrt{Y_t} dB_t^2,
 \qquad  d\langle B^1,B^2\rangle_t=\rho dt.
\end{aligned}
\end{equation}
Throughout the paper,
\begin{equation}\label{eq:parameters}
 r>0,\qquad 0\le q\le r,\qquad
 \kappa,\theta,\sigma,K>0,\qquad |\rho|<1.
\end{equation}
At zero variance we use the usual nonnegative CIR solution with
immigration $\kappa\theta>0$. Zero is not absorbing, and we do not
impose the Feller condition $2\kappa\theta\ge\sigma^2$.
The value and exercise boundary are
\begin{equation}\label{eq:value}
 V(s,v)=\sup_\tau\E_{s,v}[e^{-r\tau}(K-S_\tau)^+],
 \qquad s>0,\quad v\ge0,
\end{equation}
and
\begin{equation}\label{eq:boundary}
 b(v)=\sup\bigl(\{s\in(0,K):V(s,v)=K-s\}\cup\{0\}\bigr).
\end{equation}
The stopping times are taken in the completed Brownian filtration.
The payoff at $\tau=\infty$ is zero. We write
\begin{equation}\label{eq:notation}
 a=\kappa\theta,\qquad D=\frac{\sigma^2}{2},\qquad
 \nu=\frac aD,\qquad B=b(0),\qquad d(v)=\log\frac{B}{b(v)}.
\end{equation}
Positivity of $b$ is proved below. The two constants that determine
the asymptotic regimes are
\begin{equation}\label{eq:driftparameters}
 \Lambda=\frac\sigma{\sqrt{1-\rho^2}},\qquad
 c=\frac{2(r-q)-\rho\nu\sigma}{\sigma\sqrt{1-\rho^2}}.
\end{equation}

Daskalopoulos and Feehan~\cite{DF} established weighted regularity
estimates for the elliptic Heston obstacle problem. Lamberton and
Terenzi~\cite{LT} proved continuity and variance monotonicity for
finite-maturity Heston put values without the Feller condition.
We use Lamberton and Terenzi's continuity and monotonicity results and
pass to the perpetual problem by a uniform discount estimate.
Nguyen and Chau~\cite{NC} study
finite-maturity value regularity and smooth fit under the Feller
condition, with positive initial variance. Chen, Jin, and
Song~\cite{CJS} studied American options in stochastic volatility models
with a negative Fichera function on the degenerate boundary. These
results concern value regularity and do not give the boundary
equivalents below.

The weighted derivative estimate in \cite[Section 5, (5.2)]{DF} and
continuity of the gradient at zero variance are different properties.
Appendix~\ref{sec:explicitcusp} gives a local example that satisfies
the hypotheses of \cite[Theorem 1.1]{DF} and has bounded weighted
derivatives, but whose gradient is not continuous at the contact point
on the axis. In this example the weighted bounds hold, whereas the
joint $C^1$ regularity up to the axis that is part of the space in
\cite[Definition 2.2]{DF} fails.

Shahgholian and Yeressian~\cite{SY} studied the obstacle problem
$\operatorname{div}(x_1^a\nabla u)=\chi_{\{u>0\}}$ with $a<1$ and zero
Dirichlet data on $\{x_1=0\}$. They proved that the free boundary
touches the fixed boundary tangentially and is a $C^1$ graph near the
contact point, with a uniform modulus. Let $\nu>1$ and $c=0$, and
remove the mixed derivative as in Section~\ref{sec:axis}. The
substitution $u=y^{\nu-1}w$ then turns the leading part
$Dy(\Delta w+\frac\nu yw_y)$ of our operator into
$D\operatorname{div}(y^{2-\nu}\nabla u)$. This is the operator of
\cite{SY} with $a=2-\nu$, and bounded $w$ give zero Dirichlet data for
$u$ on the axis. Our aim is to
determine the exact approach rate and its coefficient, including the
effect of the tangential drift, and to treat $0<\nu\le1$, where this
correspondence is not available. For free boundaries that meet a fixed
boundary in the classical obstacle problem, see~\cite{An,ShU} and the
monograph~\cite{PSU}.

For $0<\nu<1$ the resulting boundary equation is nonlocal. In
divergence form our operator carries the weight $y^\nu$, which is the
weight of the Caffarelli--Silvestre extension~\cite{CS} for the
fractional Laplacian of order $1-\nu$. Accordingly, the thin contact
layer acts on the axis through a Riesz potential of order $1-\nu$, and
the tangential drift makes the associated stable operator nonsymmetric.
We use the half-line classification of Dipierro, Ros-Oton, Serra, and
Valdinoci~\cite{DRSV} for such operators.

This contact problem differs from regularity at the intersection of
fixed degenerate and nondegenerate boundary portions. It also requires
more information than a weighted Hessian bound or stock smooth fit.
Our proof does not assume a derivative trace of the value at zero
variance. The local elliptic regularity results we use apply at positive
variance. The axis equation is obtained separately by a weighted
kernel representation.

We now state the main results. Theorem~\ref{thm:sharp} gives the exact
approach rate and its leading coefficient. Theorem~\ref{thm:main}
establishes the endpoint and the boundary regularity.
Corollary~\ref{cor:boundarycurve} expresses that regularity directly in
the stock--variance plane.

For $\nu>1$ set
\[
 J_\nu(c)=\int_0^\pi e^{c\theta}\sin^{\nu-2}\theta d\theta.
\]
For the critical case set $E=e^{-\pi c}$, $C_c=1+e^{\pi c}$, and
\begin{equation}\label{eq:criticalconstants}
 A_c=\lim_{\epsilon\searrow0}\left\{
 \int_\epsilon^{\pi-\epsilon}\frac{e^{-c\theta}}{\sin\theta} d\theta
                       +(1+E)\log\epsilon\right\},\qquad
 k_c=\frac{A_c}{1+E}.
\end{equation}
The endpoint expansions of the integrand show that this finite part
exists. In particular, $C_0=2$ and $k_0=\log2$.

The first main result gives the exact equivalents of the logarithmic
gap $d(v)=\log(B/b(v))$ in the three regimes.

\begin{theorem}\label{thm:sharp}
Under \eqref{eq:parameters}, the logarithmic gap $d$ has the following
exact asymptotic equivalents as $v\searrow0$.

If $\nu>1$, then
\begin{equation}\label{eq:sharpsuper}
d(v)\sim\Lambda^{-1}
 \left\{\frac{\nu-1}{J_\nu(c)}\right\}^{\frac1{\nu-1}}
 v\Bigl(\log\frac1v\Bigr)^{\frac1{\nu-1}}.
\end{equation}
If $\nu=1$, then
\begin{equation}\label{eq:sharpcritical}
d(v)\sim\Lambda^{-1}e^{1-k_c-\frac1{C_c}}
                  v\exp\sqrt{\frac2{C_c}\log\frac1v}.
\end{equation}
If $0<\nu<1$, let $\beta\in(\nu,1)$ be the unique solution of
$\sin\pi(\beta-\nu)=e^{-\pi c}\sin\pi\beta$. There is
$A\in(0,\infty)$ such that
\begin{equation}\label{eq:sharpsub}
d(v)\sim\Lambda^{-1}A^{-\frac1\beta}v^{\frac\nu\beta}.
\end{equation}
The coefficient $A$ is determined by the convergent integral
\eqref{eq:amplitudeformula}, with the density \eqref{eq:actualM}.
This representation involves the unknown boundary profile and is not a
formula in the model parameters alone. In all three cases,
$B-b(v)\sim Bd(v)$.
\end{theorem}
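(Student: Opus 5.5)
The plan is to localize at the contact point $(\log B,0)$ and reduce to a model problem on the half-plane. Work with $x=\log s$ and the excess $u=V-(K-s)\ge0$. By the positivity of $b$ and the continuity and variance monotonicity of $V$ (which pass from Lamberton--Terenzi to the perpetual problem via the uniform discount estimate), $u$ solves $\Lop u=f$ near this point. Here $f$ is smooth with $f\to rK-qB>0$ in the continuation set, and $u=0$ on the exercise set $\{x\le\log b(v)\}$.

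The linear change of variables of Section~\ref{sec:axis} removes the mixed derivative. After scaling the tangential variable by $\Lambda$, the principal part becomes $Dy(\Delta+\frac\nu y\partial_y)$ plus a tangential drift whose normalized coefficient is $c$. All remaining terms are lower order in the blow-up. In these coordinates the exercise set near the origin is the thin region $\{\xi\le -\Lambda d(v)\}$. Since $d(v)\to0$ (the boundary is $C^1$ and tangent to the axis, Theorem~\ref{thm:main}), this region is a cusp hugging the negative axis. The identity $B-b(v)\sim Bd(v)$ is then immediate from $b=Be^{-d}$ and $d\to0$.

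The next step is an axis equation. I would write $u$ through the weighted kernel representation for $\Lop_0=y(\Delta+\frac\nu y\partial_y)+c\,\partial_\xi$ on the half-plane. The contact layer contributes a boundary measure along the negative axis, with density $M$ (the density \eqref{eq:actualM}), and its size is controlled by how quickly $u$ must vanish across the layer of width $\Lambda d(v)$. Matching this representation with the condition $u=0$ on the exercise set, and with positivity of $u$ outside it, gives a one-dimensional equation on the axis.

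The homogeneous solutions of $\Lop_0$ in polar coordinates, $r^\beta\Phi(\theta)$, satisfy an ODE whose solutions involve the weight $e^{c\theta}\sin^{\nu-2}\theta$. This splits the analysis into three regimes.
\begin{itemize}[leftmargin=*]
\item For $\nu>1$ the weight is integrable near $\theta=0,\pi$, and the flux through a thin layer of width $\epsilon$ scales like $\epsilon^{\nu-1}$. Balancing this flux against the constant source over a distance $v$ produces the factor $\log\frac1v$ with coefficient $J_\nu(c)/(\nu-1)$. Solving the balance for $\epsilon=\Lambda d$ gives \eqref{eq:sharpsuper}.
\item For $\nu=1$ the weight is $e^{-c\theta}/\sin\theta$, and the layer flux is logarithmic. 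Its finite part is exactly $A_c$ in \eqref{eq:criticalconstants}, with endpoint mass $1+E$. The balance becomes $\log(v/d)\cdot\log$-type $\approx\frac{C_c}{2}(\log(d/v))^2$. Inverting this with second-order accuracy gives $\exp\sqrt{\frac2{C_c}\log\frac1v}$, and the constants $1-k_c-\frac1{C_c}$ come from expanding the finite parts. This constant bookkeeping must be done carefully.
\item For $0<\nu<1$ the weight $y^\nu$ is the Caffarelli--Silvestre weight of order $1-\nu$. The axis equation becomes $L M=0$ on the half-line, where $L$ is a nonsymmetric stable operator whose asymmetry is $e^{-\pi c}$. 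By the half-line classification of Dipierro--Ros-Oton--Serra--Valdinoci, $M(\xi)\sim A\,\xi_+^{\beta-1}$ (up to normalization), with $\beta$ the root of $\sin\pi(\beta-\nu)=e^{-\pi c}\sin\pi\beta$. Uniqueness in $(\nu,1)$ follows by monotonicity of the ratio. Converting the density back to the layer width, the law $d^\beta\sim v^\nu/A$ yields \eqref{eq:sharpsub}, with $A$ given by \eqref{eq:amplitudeformula}.
\end{itemize}

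I expect the main obstacle to be the rigorous justification of the axis equation, not the model computations. Specifically, one must show that the true $u$, which is not known to have a derivative trace at $y=0$, is represented by the kernel formula with error terms that are negligible at the relevant scale. Within that, the hardest single point is the subcritical case. There one must prove that $A$ is finite and positive, meaning the density of the true solution actually attains the DRSV profile rather than a degenerate or infinite multiple of it.

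For that I would first try a barrier and comparison argument. The idea is to use sub- and supersolutions built from $r^\beta\Phi(\theta)$ perturbed by $r^{\beta\pm\delta}$, together with the $C^1$ tangency from Theorem~\ref{thm:main}. These barriers should trap $M$ between two multiples of $\xi^{\beta-1}$. A Liouville-type uniqueness statement for the blow-up limit would then identify the limit, which gives convergence of the ratio to a constant $A\in(0,\infty)$.
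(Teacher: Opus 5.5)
Your outline reaches the right formulas, but the step you identify as decisive in the subcritical case would not work. Suppose barriers did trap the density between two multiples of $t^\beta$. (The density \eqref{eq:actualM} behaves like $At^\beta$; $t^{\beta-1}$ is the profile of $M'$, and the half-line classification is applied to the derivative $W$ of a renormalized potential, not to $M$.) Then the blow-ups $M(\delta t)/\delta^\beta$ have subsequential limits, and a Liouville theorem identifies each one as $A_*t^\beta$. However, $A_*\in[c_1,c_2]$ may depend on the subsequence: the limiting half-line problem is linear and scale invariant, so it cannot distinguish amplitudes. What you get is $M(t)=t^\beta L(t)$ with $L$ slowly varying and bounded, and a factor such as $L(t)=2+\sin\log\log(1/t)$ is invisible to every blow-up. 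This is exactly why the paper separates ratio convergence (Proposition~\ref{prop:subcriticalratios}) from amplitude convergence (Lemma~\ref{lem:mellinamplitude}).

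The missing idea is a \emph{quantitative} error bound, together with a mechanism that turns it into convergence. Once regular variation is known, three inputs combine: Potter bounds, the thickness estimate \eqref{eq:thicknessremainder}, and the $C^{1,\lambda}$ regularity of the smooth part of the axis identity with $\lambda>\beta-\nu$. Together they give $|r_M(x)|\le Cx^{\beta+\alpha+\kappa_*}$ for the thin-layer potential minus its affine part at the contact point. The Mellin identity $\int_0^\infty r_M(x)x^{-s-\alpha-1}\,dx=\Theta(s)\widehat M(s)$, where $\Theta$ has a simple zero at $s=\beta$, then continues $\widehat M$ across $\beta$ with at most a simple pole. A smoothed contour shift and monotonicity give $M(t)/t^\beta\to A$, and regular variation forces $A>0$. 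This is also where the global formula \eqref{eq:amplitudeformula} comes from; no local barrier can produce it.

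For $\nu\ge1$, the flux balances you describe are not yet an argument, and they misplace the mechanism. The logarithm does not come from balancing a layer flux against the source. The paper first derives the exact axis identity of Proposition~\ref{prop:axisequation}: it removes the source with an explicit particular solution, extends the weak equation to the closed half-plane using the $O(v)$ bound of Lemma~\ref{lem:axislinear}, and tests against the adjoint kernel $K_c$. It then Taylor expands this identity at $\delta=0$. The layer near the observation point contributes $m_0EJ_\nu(c)h(\delta)+o(h(\delta))$. This must cancel $\delta$ times the part $t<\delta$ of the differentiated tail, whose density is about $E\eta(t)^\nu/t$ with $\eta(t)=h(t)/t$. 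The result is the nonlocal relation $\int_0^\delta\eta(t)^\nu\,dt/t\sim J_\nu(c)\eta(\delta)$, and integrating it gives the logarithmic law.

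Several ingredients your plan does not supply are needed around this computation:
\begin{itemize}
\item Differentiation under the integral and negligibility of all tails require the a priori profile compactness of Theorem~\ref{thm:anisocompact}, namely concavity of $G^{\min\{\nu,1\}}$ and the dyadic bounds \eqref{eq:dyadicinput}.
\item At $\nu=1$ the leading balance only gives $\log(1/\eta(\delta))\sim\sqrt{2\log(1/\delta)/C_c}$. The constant $e^{1-k_c-1/C_c}$ needs the next-order relation with an $o(1)$ error. That in turn requires first proving regular variation of $h$ and the quantitative profile estimate of Lemma~\ref{lem:quantprofile}.
\item In the subcritical blow-up, it is convexity of the axis trace $w(\cdot,0)$, not mere positivity of $u$, that yields $\int_0^1M(t)t^{-\nu-1}\,dt<\infty$. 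Together with a strict improvement of linear growth, this is what places the limit within the hypotheses of the half-line Liouville theorem.
\end{itemize}
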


For $r=q$ and $\rho=0$, the first two formulas become
\[
d(v)\sim\frac v{\pi\sigma}\log\frac1v\quad(\nu=2),\qquad
d(v)\sim\frac{\sqrt e}{2\sigma}\,v\exp\sqrt{\log\frac1v}
                                                    \quad(\nu=1).
\]
For $0<\nu<1$ in the same symmetric case, the exponent is
$2\nu/(1+\nu)$. The coefficients for $\nu\ge1$ depend only on
the local model parameters. In the power regime, the coefficient is
characterized by a convergent integral involving the unknown boundary profile.
We do not give a formula in the model parameters alone.

The geometric properties used to prove the exact equivalents also give
the following endpoint regularity result. We prove it independently
of the coefficient selection.

\begin{theorem}\label{thm:main}
Assume \eqref{eq:parameters}, and let $m=\min\{\nu,1\}$. Then the
following statements hold.
\begin{enumerate}[label=\textup{(\roman*)}]
\item For every finite $v\ge0$ one has $0<b(v)<K$, and $b$ is
continuous and strictly decreasing on $[0,\infty)$.
\item The stock gap $B-b(v)$ is of larger order than the variance:
\begin{equation}\label{eq:mainseparation}
 \frac{B-b(v)}v\longrightarrow\infty\qquad(v\searrow0).
\end{equation}
\item For each $0<\alpha<m$, there are $C_\alpha,v_\alpha>0$ such that
\begin{equation}\label{eq:mainpower}
 B-b(v)\le C_\alpha v^\alpha,\qquad 0<v<v_\alpha.
\end{equation}
If $\nu\ge1$, then
\begin{equation}\label{eq:mainlogexponent}
 \lim_{v\searrow0}\frac{\log d(v)}{\log v}=1.
\end{equation}
\item The function $b$ is real analytic on $(0,\infty)$, with $b'<0$
and
\begin{equation}\label{eq:mainactualslope}
 b'(v)\longrightarrow-\infty\qquad(v\searrow0).
\end{equation}
\item The function $d$ has a strictly increasing local inverse
$\gamma$, whose extension by $\gamma(0)=0$ belongs to
$C^1([0,\delta_0))$ for some $\delta_0>0$, with $\gamma'(0)=0$.
If $\nu\ge1$, this extension is not $C^{1,\alpha}$ for any
$0<\alpha\le1$.
\end{enumerate}
\end{theorem}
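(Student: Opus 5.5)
The plan is to prove the five items in order, with each step feeding the next. The finer estimates in (ii), (iii) and (v) will come out of a blow-up analysis near the contact point $(B,0)$.

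\emph{Step (i).} I would first pass from finite to infinite maturity. The finite-maturity Heston put values $V^T$ are continuous and nondecreasing in $v$ by Lamberton--Terenzi~\cite{LT}, with no Feller condition needed. Since $r>0$, discounting gives $|V-V^T|\le Ke^{-rT}$ uniformly, so $V$ is continuous and nondecreasing in $v$. Two bounds then place $b$ strictly inside $(0,K)$.
\begin{enumerate}[label=\textup{(\alph*)}]
\item Upper bound. Comparing with a finite-variance strip shows that exercise is suboptimal near $s=K$ for every $v$, so $b<K$.
\item Lower bound. A supersolution argument on $\{s<\epsilon\}$ uses $r>0$ and $q\le r$: holding costs the interest $rK$ minus dividends, so early exercise is optimal for small $s$. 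This gives $b(v)>0$ for every finite $v$.
\end{enumerate}
Monotonicity of $V$ in $v$ makes $b$ nonincreasing. Strict decrease comes from the strong maximum principle. If $b$ were constant on an interval, then $V-(K-s)$ would vanish on a set where $\Lop(K-s)-r(K-s)=qs-rK<0$. Continuity of $b$ follows from continuity of $V$ together with this strictness. The same argument holds at $v=0$, because zero is not absorbing and the inward drift $a>0$ makes $v=0$ an outflow (Fichera) boundary.

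\emph{Step (iv).} On $\{v>0\}$ the operator is uniformly elliptic with analytic coefficients. I would use the logarithmic variable $x=\log s$ and remove the mixed term by the linear change of variables of Section~\ref{sec:axis}. The free boundary of this obstacle problem is then regular, because the obstacle has strictly negative Laplacian-type source $qs-rK<0$. By Caffarelli's theory and Kinderlehrer--Nirenberg it is real analytic, and Hopf's lemma gives $b'<0$. The statement $b'\to-\infty$ will follow from (ii) together with (v). Once $\gamma$ is shown to be $C^1$ with $\gamma'(0)=0$, we get $d'(v)=1/\gamma'(d(v))\to\infty$, and $b'=-b\,d'$.

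\emph{Steps (ii), (iii), (v), and the main obstacle.} I would set $u=V-(K-s)\ge0$ and rescale near $(B,0)$ by $(x,y)\mapsto(x/\lambda,y/\lambda)$. The degenerate operator $Dy\Delta+a\partial_y+(\text{tangential drift})$ is invariant under this scaling at leading order. Blow-up limits therefore solve a homogeneous obstacle problem with drift on the half-plane, and the contact set on the axis is $\{x\le0\}$.

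For (ii), I would argue by contradiction. If $d(v)\le Cv$ along a sequence, the blow-up free boundary would be a nonhorizontal line through the origin. I would exclude this with the explicit homogeneous solutions built from the kernel $\sin^{\nu-2}\theta\,e^{c\theta}$ (for $\nu\le1$, the Riesz/stable analogue). These show that a wedge-shaped positivity set cannot carry the inward drift. This gives tangency, which is (ii), and also $\gamma'(0)=0$.

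For (iii), I would construct barriers. Comparison with the functions $y^{\alpha}$-type and $r^{\beta}\Phi(\theta)$, where the exponent $\beta$ can be taken arbitrarily close to $m$, yields the power bound $B-b(v)\le C_\alpha v^\alpha$. For $\nu\ge1$, I would combine this with (ii) to get the limit of $\log d/\log v$.

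Continuity of $\gamma'$ at $0$ then follows from uniqueness of the blow-up, namely the flat contact. For the failure of $C^{1,\alpha}$ when $\nu\ge1$, I would note that $d(v)\gg v$ while $\log d\sim\log v$. Hence $\gamma(t)=o(t)$ but $\gamma(t)\ge t^{1+\epsilon}$ for every $\epsilon$. A bound $\gamma'(t)\le Ct^{\alpha}$ would integrate to $\gamma(t)\le Ct^{1+\alpha}$, which is a contradiction.

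The main obstacle is the blow-up classification. We have no derivative trace of $V$ on the axis, so I would obtain the axis equation through the weighted kernel representation instead. For $\nu<1$ this equation is the nonsymmetric stable half-line problem, and there I would invoke the classification of~\cite{DRSV}.
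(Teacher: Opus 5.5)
Your proposal has a genuine gap at step (ii), which is the heart of the theorem. Along a sequence with $d(v_n)\le Cv_n$, the isotropic blow-up fixes only one rescaled free-boundary point, at height $Y=1$ with $X\in[-C,0)$. Nothing makes the limit homogeneous or its free boundary a line through the origin, and the $C^{1,\alpha}_{\mathrm{loc}}$ convergence of Proposition~\ref{prop:compact} does not give convergence of free boundaries. More importantly, you give no mechanism that excludes a slanted contact set. The local expansion of Lemma~\ref{lem:flat} excludes only a vertical interface; this is exactly how Proposition~\ref{prop:lower} obtains $\liminf d(v)/v>0$. For an interface $X=-kY$ with $k>0$, the leading term is a positive multiple of $(X+kY)^2/Y$. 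This is increasing in $Y$ near the interface, so monotonicity in the variance yields no contradiction. The kernel $e^{c\theta}\sin^{\nu-2}\theta$ enters the paper only in the rate selection for $\nu>1$, after separation is known.

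The missing idea is the paper's three-part argument: nondegeneracy $R(B,v)\ge c_*v$ (Lemma~\ref{lem:contactnondegenerate}), the convexity bound $R(B,v)\le(B-b(v))R_s(B,v)$, and a \emph{conditional} stock smooth fit. If $d(v_n)\le Mv_n$, the small-time entry estimate of Lemma~\ref{lem:smalltimeentry} and the Brownian zero--one law force the process started at $(B,0)$ into the interior of the exercise set at arbitrarily small times. Then $R_s(B,v)\to0$ by Lemma~\ref{lem:entrysmoothfit}, so $R(B,v_n)=o(v_n)$, a contradiction. Nor can you fall back on the axis equation and the stable half-line classification here. In the paper that analysis takes $h(t)/t\to0$, the profile compactness of Theorem~\ref{thm:anisocompact}, and the dyadic bounds from (iii) as inputs, so it presupposes (ii) and (iii).

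The $C^1$ part of (v) also does not follow as you claim. Once $d(v)/v\to\infty$, the free boundary leaves every compact window of the isotropic blow-up at $(B,0)$: at height $Y$ it sits at $X=-d(\varepsilon Y)/\varepsilon\to-\infty$. Uniqueness of that blow-up therefore gives only $\gamma(\delta)/\delta\to0$, not continuity of $\gamma'$, and hence not $b'\to-\infty$. The paper instead rescales about the moving contact point $(x_0-\delta,\gamma(\delta))$ by its own variance $\varepsilon=\gamma(\delta)$, where the problem is uniformly elliptic near $(0,1)$. Caffarelli's estimates give common moduli for the normal and the one-sided Hessian. Then $D^2F_\varepsilon(p_*)=\lambda_\varepsilon n_\varepsilon\otimes n_\varepsilon$ together with the convexity bound $F_{\varepsilon,X}\le C\varepsilon/\delta$ forces $n_{\varepsilon,x}\to0$ (Theorem~\ref{thm:endpointregularity}).

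Step (iii) is likewise unsupported. The exit-time product barriers of Proposition~\ref{prop:upper} saturate at $\omega_\nu(v)$, for instance $v^{1/2}$ when $\nu>1$. You do not say what $r^\beta\Phi(\theta)$ would be compared with, or why its exponent can approach $m$. The paper combines the exact vertical identity \eqref{eq:exactvalue}, the averaged remainder bound (negligible because $\varepsilon/\delta\to0$), and stock convexity. These show that every anisotropic limit profile has $G^m$ concave, whence $\gamma(2\delta)\le(2^{1/m}+o(1))\gamma(\delta)$.

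Step (i) also falls short in several places:
\begin{itemize}
\item The inequality $qs-rK<0$ holds on every contact region and gives no contradiction. Strict decrease needs the jet \eqref{eq:verticaljet} played against monotonicity in $v$.
\item Continuity of $V$ yields only left continuity of $b$. Right jumps are excluded by the sign of $H=w_x-w$.
\item Continuity at $v=0$ needs the barrier estimate of Proposition~\ref{prop:upper}.
\item Your argument for $b>0$ is heuristic; the paper uses a translation argument on a continuation strip.
\end{itemize}
Your treatment of analyticity and $b'<0$ agrees with the paper, once you add the Lipschitz chart from two-variable monotonicity that makes every free-boundary point regular. Your arguments for the limit of $\log d/\log v$ and for the failure of $C^{1,\alpha}$ also agree with the paper.
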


The endpoint difference quotient in \eqref{eq:mainseparation} does
not by itself imply continuity of $\gamma'$. For that conclusion,
we rescale about moving contact points by the local variance. The
operators remain uniformly elliptic near rescaled variance one, and
the horizontal derivative tends to zero. Uniform free-boundary
estimates then show that the normal becomes vertical.
Theorem~\ref{thm:endpointregularity} gives a modulus in terms of
$\gamma(\delta)/\delta$. We do not differentiate the asymptotic
equivalents in Theorem~\ref{thm:sharp}.

We use the term regular $C^1$ curve for the image of an injective
$C^1$ parametrization whose tangent vector never vanishes. The
following consequence makes explicit the geometric content of
Theorem~\ref{thm:main}.

\begin{corollary}\label{cor:boundarycurve}
Under \eqref{eq:parameters}, the exercise boundary near $(B,0)$
is a regular $C^1$ curve tangent to the axis $v=0$. It is real
analytic away from the endpoint. If $\nu\ge1$, it admits no regular
$C^{1,\alpha}$ parametrization up to the endpoint for any
$0<\alpha\le1$.
\end{corollary}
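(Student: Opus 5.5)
The plan is to read the corollary off Theorem~\ref{thm:main}(iv)--(v) by choosing a parametrization adapted to the variance axis. Near $(B,0)$ the exercise boundary is the graph $\{(b(v),v):0\le v<\delta\}$, where $\delta>0$ is small; this uses the continuity and strict monotonicity in Theorem~\ref{thm:main}(i) together with the definition \eqref{eq:boundary}. Write $x=\log(B/s)$, so that the boundary becomes $\{x=d(v)\}$. Since $d$ is continuous and strictly increasing with $d(0)=0$, the curve is also the graph $v=\gamma(x)$ over $x\in[0,\delta_0)$, where $\gamma$ is the inverse from Theorem~\ref{thm:main}(v). In the plane we use the parametrization
\[
 x\longmapsto\bigl(Be^{-x},\gamma(x)\bigr),\qquad 0\le x<\delta_0 .
\]
It is injective because its first coordinate is strictly monotone. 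It is $C^1$ up to $x=0$ because $\gamma\in C^1([0,\delta_0))$. Its tangent vector $(-Be^{-x},\gamma'(x))$ never vanishes, since the first component is nonzero. At $x=0$ the tangent is $(-B,0)$ because $\gamma'(0)=0$, so the curve is tangent to the axis $v=0$.

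Analyticity away from the endpoint follows from Theorem~\ref{thm:main}(iv). On $(0,\infty)$ the function $b$ is real analytic with $b'<0$. The analytic inverse function theorem then gives that $\gamma$ is real analytic on $(0,\delta_0)$. Equivalently, $v\mapsto(b(v),v)$ is already an analytic regular parametrization for $v>0$.

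For the negative statement, let $\nu\ge1$ and suppose there were a regular $C^{1,\alpha}$ parametrization $\phi(t)=(\phi_1(t),\phi_2(t))$ of the curve up to the endpoint, with $\phi(0)=(B,0)$. Every regular $C^1$ parametrization has tangent parallel to $(-B,0)$ at the endpoint, because this tangent line is intrinsic to the curve. Hence $\phi_1'(0)\neq0$, and by continuity $\phi_1'\ne0$ near $0$. The function $x(t)=\log(B/\phi_1(t))$ is then $C^{1,\alpha}$ with $x'\neq0$ near $0$, so its inverse $t(x)$ is also $C^{1,\alpha}$. Consequently $\gamma=\phi_2\circ t$ is $C^{1,\alpha}$ near $0$. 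This contradicts the last assertion of Theorem~\ref{thm:main}(v).

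The one step that needs care is the claim that the tangent direction at the endpoint is intrinsic. Suppose $\phi$ is a regular $C^1$ parametrization of the same set. Since both parametrizations are injective onto the same arc, $\phi_1$ must be monotone. Its derivative therefore cannot vanish without forcing the tangent to be vertical. A vertical tangent is incompatible with $\gamma(x)=o(x)$, which holds because $\gamma'(0)=0$ and the points $\phi(t)$ lie on the graph of $\gamma$. The remaining steps are direct consequences of Theorem~\ref{thm:main}.
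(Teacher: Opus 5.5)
Your proposal is correct and follows the paper's proof essentially step for step: the parametrization $\delta\mapsto(Be^{-\delta},\gamma(\delta))$ from Theorem~\ref{thm:main}(v), analyticity from part (iv), and the change of parameter $\log(B/\widetilde p_1(t))$, which transfers a $C^{1,\alpha}$ parametrization to $\gamma$. Your extra check that any regular $C^1$ parametrization has a horizontal tangent at the endpoint fills in a step the paper states without comment. That check works as you wrote it, since a vertical tangent would force $\phi_2'(0)=0$ via $\gamma(x)=o(x)$, and the monotonicity of $\phi_1$ is not actually needed for it.
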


\begin{proof}
Let $\delta_0$ be as in Theorem~\ref{thm:main}. The local boundary
has the parametrization
\begin{equation}\label{eq:boundarycurve}
 p(\delta)=\bigl(Be^{-\delta},\gamma(\delta)\bigr),
 \qquad 0\le\delta<\delta_0.
\end{equation}
It is injective, belongs to $C^1$, and satisfies
\[
 p'(\delta)=\bigl(-Be^{-\delta},\gamma'(\delta)\bigr),
 \qquad p'(0)=(-B,0)\ne0.
\]
Thus its tangent at the endpoint is horizontal. Analyticity at
positive variance follows from Theorem~\ref{thm:main}.

Suppose that $\nu\ge1$ and that another regular parametrization
$\widetilde p(t)$ were $C^{1,\alpha}$ at the endpoint. Its first
component would have a nonzero derivative there, since the tangent
is horizontal. The change of parameter
$\delta(t)=\log(B/\widetilde p_1(t))$ would therefore have a
$C^{1,\alpha}$ local inverse. Composing this inverse with
$\widetilde p_2$ would make $\gamma$ belong to $C^{1,\alpha}$,
contrary to Theorem~\ref{thm:main}.
\end{proof}

The divergence of $b'$ concerns a different parametrization of the
same curve. Differentiating $\gamma(d(v))=v$ for $v>0$ gives
\begin{equation}\label{eq:inverseboundaryderivative}
 b'(v)=-\frac{b(v)}{\gamma'(d(v))}\longrightarrow-\infty.
\end{equation}
With stock on the horizontal axis, $b'$ is the derivative $ ds/ dv$,
and the tangent slope $ dv/ ds$ tends to zero. For example, the smooth
arc $v=(B-s)^2$, $s\le B$, is represented in the opposite direction
by $s=B-\sqrt v$, whose derivative diverges at zero. For $0<\nu<1$,
we prove the $C^1$ conclusion above but do not determine the optimal
H\"older regularity of the tangent.

We now outline the proofs. The argument has three parts. First, the
linear excess-value bound in
Proposition~\ref{prop:contactbound} and the product barrier in
Proposition~\ref{prop:upper} give continuity of $b$ at zero. A
nondegeneracy estimate and a small-time entry argument then prove
$d(v)/v\to\infty$ in Theorem~\ref{thm:separation}. The entry argument
is used under a contradiction hypothesis that the ratio is bounded, so it does
not require stock smooth fit at zero variance as an input.
Theorem~\ref{thm:endpointregularity} obtains continuity of the tangent
by rescaling at moving contact points. This completes the geometric
part of the argument before the exact rates are determined.

Second, vertical integration of the obstacle equation controls the
error after averaging in log-stock. Theorem~\ref{thm:anisocompact}
uses convexity to obtain compactness of the rescaled inverse boundaries
and concavity of their normalized limits. Lemma~\ref{lem:axislinear}
gives an $O(v)$ approach estimate for the value on the whole axis.
After a shear, Proposition~\ref{prop:axisequation} represents the
axis trace by an explicit adjoint kernel. The lower-order terms
have a tangential $C^{1,\lambda}$ remainder for every $\lambda<1$.

Third, this equation selects the rate. For $\nu>1$, the contribution
near the observation point balances the derivative tail and gives a
logarithmic power in Proposition~\ref{prop:superrate}. At $\nu=1$,
the local integral has a logarithmic divergence, and the finite-part
calculation in Proposition~\ref{prop:criticalrate} determines the
coefficient. For $0<\nu<1$, the thin-layer limit is a nonsymmetric
Riesz potential. In Proposition~\ref{prop:subcriticalratios}, convexity
of the entire axis trace supplies the growth and integrability
conditions for a stable half-line classification. Lemma~\ref{lem:mellinamplitude}
then proves convergence to a positive finite amplitude by a Mellin
transform, and Proposition~\ref{prop:subcriticalrate} applies this
argument to the actual boundary. The endpoint regularity theorem is
not an input to these coefficient calculations.

Section~\ref{sec:prelim} gives the value estimates.
Section~\ref{sec:geometry} establishes the boundary geometry, scale
separation and endpoint regularity. Section~\ref{sec:axis} contains
the vertical identities, the compactness of boundary profiles and the
axis equation, and Section~\ref{sec:rates} proves the exact
equivalents. Appendix~\ref{sec:explicitcusp} discusses joint
differentiability at the contact point and gives an explicit
logarithmic contact curve.

\section{The value function and bounds at zero variance}\label{sec:prelim}

We use the parameters in \eqref{eq:notation} and write
$\HH=\R\times(0,\infty)$. On all $s>0$, the excess value is
\begin{equation}\label{eq:excess}
 R(s,y)=V(s,y)-K+s,\qquad w(x,y)=R(e^x,y),\qquad x_0=\log B.
\end{equation}
The isotropic rescaling and its limiting operator are
\begin{equation}\label{eq:blowup}
 W_\varepsilon(X,Y)=\frac{w(x_0+\varepsilon X,\varepsilon Y)}\varepsilon,
 \qquad f_0=rK-qB>0.
\end{equation}
\begin{equation}\label{eq:Lzero}
 \mathcal L_0=\frac Y2\partial_{XX}+\rho\sigma Y\partial_{XY}
       +DY\partial_{YY}+(r-q)\partial_X+a\partial_Y.
\end{equation}
The constant $f_0$ is positive by \eqref{eq:parameters} and $B<K$.
Constants in estimates may change from line to line and depend on the
fixed model parameters and stated localization domains. The function
$\gamma$ denotes the local inverse of $d$, whose existence follows
from the boundary properties proved in Section~\ref{sec:geometry}.

For positive functions, $F\sim G$ means $F/G\to1$ at the stated
limit, and $F\asymp G$ means that this ratio is bounded above and
below by positive constants. A positive measurable function $F$ is
regularly varying at zero with index $p$ if
\[
 \frac{F(tu)}{F(t)}\longrightarrow u^p\qquad(t\searrow0)
 \quad\hbox{for every }u>0.
\]
We use the uniform convergence theorem and the Potter bounds of
\cite[Chapter~1]{BGT} in this form at zero. In particular, for each
$\epsilon>0$ there are $C_\epsilon,t_\epsilon>0$ such that
\begin{equation}\label{eq:potterbounds}
 C_\epsilon^{-1}\min\{u^{p-\epsilon},u^{p+\epsilon}\}
 \le\frac{F(tu)}{F(t)}
 \le C_\epsilon\max\{u^{p-\epsilon},u^{p+\epsilon}\}
\end{equation}
whenever $0<t,tu<t_\epsilon$. In the boundary analysis below,
regular variation is proved before these bounds are applied.

We first record the basic properties of the value function.

\begin{lemma}\label{lem:basic}
The function $V$ is continuous on $(0,\infty)\times[0,\infty)$,
nonincreasing and convex in $s$, and nondecreasing in $v$. It satisfies
$0\le V\le K$ and $V\ge(K-s)^+$. For every $v\ge0$, the function $R$
in \eqref{eq:excess} is nonnegative, convex, nondecreasing, and
$1$-Lipschitz in $s$. It extends to $R(0,v)=0$, and
\begin{equation}\label{eq:convexexcess}
 0\le R(s,v)\le s,\qquad sR_s(s,v)\ge R(s,v)
\end{equation}
at points where the derivative exists.
\end{lemma}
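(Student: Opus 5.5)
The plan is to read off every property from the probabilistic representation \eqref{eq:value}, using linearity of the stock dynamics in $s$. Write $S_t=sZ_t$, where
\[
Z_t=\exp\Bigl(\int_0^t\bigl(r-q-\tfrac12Y_u\bigr)du+\int_0^t\sqrt{Y_u}\,dB^1_u\Bigr)
\]
depends only on $(Y,B^1)$ and not on $s$. Then for each fixed stopping time $\tau$ the map $s\mapsto e^{-r\tau}(K-sZ_\tau)^+$ is convex and nonincreasing. A supremum of convex nonincreasing functions is convex and nonincreasing, so $V(\cdot,v)$ has both properties.

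The elementary bounds follow directly.
\begin{itemize}
\item Taking $\tau=0$ gives $V\ge(K-s)^+$.
\item Since the payoff lies in $[0,K]$ and $r>0$, we get $0\le V\le K$.
\item For two initial stocks $s_1<s_2$, the elementary inequality $|(K-x)^+-(K-y)^+|\le|x-y|$ gives
\[
|V(s_1,v)-V(s_2,v)|\le|s_1-s_2|\sup_\tau\E[e^{-r\tau}Z_\tau].
\]
Because $q\ge0$, the process $e^{-rt}Z_t=e^{-qt}\mathcal E_t$ is a nonnegative supermartingale, where $\mathcal E$ is the stochastic exponential of $\int\sqrt Y\,dB^1$. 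Hence $\E[e^{-r\tau}Z_\tau]\le1$ by optional sampling, with Fatou's lemma on $\{\tau=\infty\}$. So $V$ is $1$-Lipschitz in $s$.
\end{itemize}

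Next I transfer these facts to $R=V-K+s$. It is convex, as a convex function plus a linear one. Since $V$ is $1$-Lipschitz and nonincreasing, $-1\le V_s\le0$, hence $0\le R_s\le1$. Thus $R$ is nondecreasing and $1$-Lipschitz. From $V\ge K-s$ we get $R\ge0$, and from $V\le K$ we get $R\le s$. As $s\searrow0$, $R(s,v)\le s\to0$, so $R$ extends by $R(0,v)=0$. For the last inequality in \eqref{eq:convexexcess}, use convexity of $R(\cdot,v)$ on $[0,\infty)$ together with $R(0,v)=0$. The supporting-line inequality gives $0=R(0,v)\ge R(s,v)-sR_s(s,v)$, which is $sR_s\ge R$ wherever the derivative exists.

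Monotonicity in $v$ and joint continuity are where the real difficulty sits, because of the square-root degeneracy and the absence of the Feller condition. Comparison arguments for the coupled system are delicate in that setting. So I would invoke Lamberton--Terenzi \cite{LT} as the introduction announces. They give, without the Feller condition, continuity of the finite-maturity put value $V^T$ on $(0,\infty)\times[0,\infty)$ and monotonicity of $V^T$ in $v$.

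The perpetual value is then obtained as a limit. As $T\to\infty$, $V^T$ increases to $V$, and this convergence should be uniform. For any stopping time $\tau$, replacing it by $\tau\wedge T$ loses at most the contribution from $\{\tau>T\}$, which is at most $Ke^{-rT}$ since the payoff is bounded by $K$ and discounted by $e^{-r\tau}$. Hence
\[
0\le V-V^T\le Ke^{-rT},
\]
uniformly in $(s,v)$. This is the uniform discount estimate mentioned in the introduction. Continuity and monotonicity in $v$ pass to the uniform limit.

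The main obstacle is confirming that the cited finite-horizon results apply with $q\ge0$ and the chosen CIR convention at $v=0$. I expect this to be a matter of checking hypotheses rather than a new argument.
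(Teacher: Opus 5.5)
Your proposal is correct and follows the paper's proof: you use the scaling $S_t=sZ_t$ with a common filtration to get convexity and monotonicity in $s$, the elementary bounds from immediate exercise and the bounded payoff, and continuity and variance monotonicity from Lamberton--Terenzi, transferred through the uniform estimate $0\le V-V^T\le Ke^{-rT}$. The only difference is the Lipschitz step: you prove $\E[e^{-r\tau}Z_\tau]\le1$ by a supermartingale argument (using $q\ge0$), whereas the paper gets monotonicity of $R$ directly from convexity, $R\ge0$ and $R(0,v)=0$ via $R(s_1,v)\le(s_1/s_2)R(s_2,v)$, and the $1$-Lipschitz bound from monotonicity of $V$, so it needs no stochastic estimate.
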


\begin{proof}
Let $P_T$ be the value with remaining maturity $T$. Truncating any stopping
time at $T$ gives
\begin{equation}\label{eq:finiteapprox}
 0\le V-P_T\le Ke^{-rT}
\end{equation}
uniformly over all initial states. Indeed, on $\{\tau>T\}$ the original
discounted payoff is at most $Ke^{-rT}$, whereas the payoff at $T$ is
nonnegative. Continuity and monotonicity in the variance therefore follow
from the finite-maturity results in \cite[Proposition 4.1]{LT}.

For a fixed variance path, write $S_t=sZ_t$ using a common Brownian
filtration for all $s$. For each stopping time the expected payoff is
convex and nonincreasing in $s$. Taking the supremum preserves these
properties. The immediate exercise strategy and bounded payoff give
$(K-s)^+\le V(s,v)\le K$, so that $0\le R(s,v)\le s$ and $R(0,v)=0$.

Convexity implies that $R(s,v)/s$ is nondecreasing and that
$sR_s\ge R$ wherever $R_s$ exists. It also implies monotonicity of $R$:
for $0<s_1<s_2$,
$R(s_1,v)\le(s_1/s_2)R(s_2,v)\le R(s_2,v)$.
Finally, the monotonicity of $V$ gives
$R(s_2,v)-R(s_1,v)\le s_2-s_1$.
\end{proof}

In particular, the zero set of $R(\cdot,v)$ is an initial interval, and
$b$ is nonincreasing in $v$. We next state the local equation, including
the inequality across the contact set.

The discounted generator in logarithmic coordinates is
\begin{equation}\label{eq:operator}
 \Lop=\frac y2\partial_{xx}+\rho\sigma y\partial_{xy}
 +Dy\partial_{yy}+\Bigl(r-q-\frac y2\Bigr)\partial_x+(a-\kappa y)\partial_y-r.
\end{equation}
Put $f(x)=rK-qe^x$ and
$\CC=\{(x,y):y>0,\ V(e^x,y)>(K-e^x)^+\}$.

\begin{lemma}\label{lem:pde}
The function $w$ is smooth in $\CC$ and satisfies $\Lop w=f$ there.
On $\{x<\log K,y>0\}$ it solves, in the viscosity sense,
\begin{equation}\label{eq:vi}
 \min\{f-\Lop w,w\}=0.
\end{equation}
In particular, if a smooth function $\phi$ touches $w$ from below at a
point with positive variance, then $\Lop\phi\le f$ at that point.
\end{lemma}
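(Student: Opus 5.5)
The plan is to derive Lemma~\ref{lem:pde} from the standard viscosity characterization of optimal stopping values, then upgrade regularity in $\CC$ by local uniform ellipticity. The starting point is the dynamic programming principle for the perpetual problem \eqref{eq:value}, which holds because $V$ is continuous by Lemma~\ref{lem:basic} and the payoff is bounded. In log-stock coordinates $x=\log s$, It\^o's formula shows that the discounted generator of $(\log S,Y)$ acting on $V(e^x,y)$ is $\Lop$ from \eqref{eq:operator}. Since $K-e^x$ satisfies $\Lop(K-e^x)=-rK+qe^x=-f$, the excess $w=V-(K-e^x)$ obeys $\Lop w=\Lop V+f$. Hence $\Lop V=0$ in the continuation region translates to $\Lop w=f$, and the supersolution property $\Lop V\le0$ translates to $\Lop w\le f$.

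\textbf{Viscosity inequalities.} For the supersolution part, fix $(x_1,y_1)$ with $y_1>0$ and a smooth $\phi$ touching $w$ from below there. Take the stopping time $\tau_h=h\wedge$ the exit time from a small ball $U\Subset\HH$. Then $V\ge\E[e^{-r\tau_h}V(S_{\tau_h},Y_{\tau_h})]$, so $V$ is $r$-excessive. Applying It\^o to $\phi+(K-e^x)$ on $[0,\tau_h]$, dividing by $\E\tau_h\sim h$ and letting $h\to0$ gives $\Lop\phi(x_1,y_1)\le f(x_1)$; this step uses that the coefficients are smooth and bounded on $U$. Since also $w\ge0$, this yields $\min\{f-\Lop w,w\}\ge0$ in the viscosity supersolution sense, and it is exactly the final sentence of the lemma. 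For the subsolution part, let $\phi$ touch $w$ from above at a point with $w>0$. By continuity, $w>0$ on a neighbourhood $U$, so it is optimal not to stop before exiting $U$, and $V(s,y)=\E[e^{-r\tau_U\wedge h}V(\cdot)]$ holds with equality by dynamic programming. The reverse It\^o argument then gives $\Lop\phi\ge f$. At points with $w=0$ the subsolution inequality for the minimum is trivial. The restriction $x<\log K$ ensures that the obstacle $K-e^x$ is smooth there, so the contact set lies in this region.

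\textbf{Smoothness in $\CC$.} On any ball $U\Subset\CC$ we have $y\ge y_U>0$, and $\Lop$ is uniformly elliptic with smooth coefficients, since the diffusion matrix $y\bigl(\begin{smallmatrix}1&\rho\sigma\\ \rho\sigma&\sigma^2\end{smallmatrix}\bigr)$ has determinant $y^2\sigma^2(1-\rho^2)>0$. Solve the Dirichlet problem $\Lop u=f$ in $U$ with $u=w$ on $\partial U$; the data are continuous and a smooth ball is regular. Classical Schauder theory gives $u\in C^\infty(U)\cap C(\overline U)$. The function $w$ is both a viscosity sub- and supersolution of $\Lop w=f$ in $U$. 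The comparison principle for uniformly elliptic linear equations, which applies because the zeroth-order coefficient is $-r<0$, gives $w=u$, so $w$ is smooth and satisfies $\Lop w=f$ classically in $\CC$.

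\textbf{Main obstacle.} The delicate point is the rigorous dynamic programming principle with localized stopping times in the completed Brownian filtration, without the Feller condition. Positivity of variance near the test point keeps $Y$ away from zero up to $\tau_U$, so the boundary behaviour of the CIR process never enters. Continuity of $V$, which holds up to $v=0$ by Lemma~\ref{lem:basic}, supplies the measurability and regularity needed. Alternatively, one can pass to the limit from the finite-maturity viscosity characterization via \eqref{eq:finiteapprox} and the stability of viscosity solutions under uniform convergence.
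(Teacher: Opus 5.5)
Your proposal is correct and follows essentially the paper's route. Both use the Snell-envelope excessivity and the equality up to exit from a relatively compact part of $\CC$, Itô's formula on test functions for the supersolution inequality, and local uniform ellipticity at $y>0$ to identify $w$ with a smooth local Dirichlet solution. The one minor variation is that you reach that identification through the viscosity subsolution property plus comparison. The paper instead identifies $V$ with the Dirichlet solution directly from the equality case, and then reads the subsolution half of \eqref{eq:vi} off the classical equation in $\CC$ and $w=0$ on the contact set.
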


\begin{proof}
The discounted reward is continuous, bounded by $K$, and converges to zero
at infinite time. The Snell envelope is therefore of class D. The Markov
optimal stopping principle gives
\begin{equation}\label{eq:excessive}
 V(s,v)\ge\E_{s,v}[e^{-r\eta}V(S_\eta,Y_\eta)]
\end{equation}
for bounded stopping times $\eta$, and equality when stopped before exit
from a relatively compact subset of the continuation region. These are
the usual local properties of the Snell envelope~\cite{PS}.
Local uniform ellipticity at $y>0$ identifies $V$ with the local
Dirichlet solution in the continuation region and gives its smoothness.
Since $\Lop(K-e^x)=-f(x)$, the equation for $w$ follows.

If $\phi$ touches $w$ from below, then $\phi+K-e^x$ touches $V(e^x,y)$
from below. Apply \eqref{eq:excessive} at the minimum of a small time and
the exit time from a small positive-variance ball. Ito's formula and
division by the small time give $\Lop\phi-f\le0$. On the contact set
with $x<\log K$, $w=0$. In the continuation set the equation holds.
Together these give \eqref{eq:vi}.
\end{proof}

We use the following local consequence of classical obstacle regularity.
On a relatively compact positive-variance domain on which the obstacle
is smooth, $w\in W^{2,p}_{\loc}$ for every finite $p$. The local estimate
depends on ellipticity and coefficient bounds, the source, and a local
bound for $w$. We use \cite[Theorems A.3(2) and A.5]{Feehan}, with
$A=-\Lop$, source $-f$, and obstacle zero. Local Dirichlet comparison
identifies that solution with the viscosity solution above. We will apply
these estimates only on domains where their ellipticity constants are
uniformly positive.

\begin{lemma}\label{lem:positiveboundary}
For every finite $v\ge0$, one has $0<b(v)<K$.
\end{lemma}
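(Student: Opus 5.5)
We prove the two inequalities $b(v)<K$ and $b(v)>0$ separately. Since $b$ is nonincreasing in $v$ by Lemma~\ref{lem:basic}, the second one only needs a lower bound that holds at each fixed finite $v$.

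\emph{Upper bound $b(v)<K$.} If $s\ge K$, then $(K-s)^+=0$. For $s$ near $K$ the value is strictly positive, because starting at $s=K$ and stopping at the first time $S$ drops below $K-\epsilon$ gives a positive expected payoff. This uses that the stock is genuinely random: $Y$ is not identically zero, since zero is not absorbing and has immigration $a>0$, so $S$ leaves $K$ in any time interval with positive probability. Now take $s<K$ close to $K$ and suppose $V(s,v)=K-s$. We contradict this with a local improvement argument: wait until the first exit of $S$ from $(s-h,s+h)$, capped at a small time $t$. By convexity of the payoff at $K$, the gain $\tfrac12\E[(S-K)^+]$ from the kink is of order $\sqrt{\E\int_0^tY\,du}\,\sim\sqrt t$ as $h,s-K\to0$. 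The discount and drift losses are only $O(t)$. Hence $V(s,v)>K-s$ for all $s$ in a neighbourhood $(K-\eta,K)$. Since the zero set of $R(\cdot,v)$ is an initial interval, this gives $b(v)\le K-\eta<K$.

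For $v=0$ the kink gain is still positive, because $Y_u\approx au$ for small $u$, so $\E\int_0^tY\,du\asymp t^2$. The gain is then of order $t$, which competes with losses of order $t$. We therefore choose $s-K=-h$ with $h\ll t$: the loss from stopping later is $O(t)\cdot h$ relative to the distance, while the gain is of order $\E|S_t-s|\gtrsim t$. A cleaner alternative is to use continuity of $V$ in $v$ together with strict positivity of $R(K,0)$.

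\emph{Lower bound $b(v)>0$.} Here we must show that exercise is optimal for small $s$. The mechanism is that the running benefit of waiting is negative near $s=0$: in the continuation region $\Lop w=f$, and $f(x)=rK-qe^x\to rK>0$ as $x\to-\infty$. Exercising early gains interest $rK$ at rate at most $qs$, while the optionality gain is bounded by $R\le s$. We build a supersolution argument directly from the representation $R(s,v)=\sup_\tau\E[\int_0^\tau e^{-ru}(qS_u-rK)\,du+\text{local time at }K]$. This is the early-exercise decomposition obtained from Itô--Tanaka applied to $e^{-rt}(K-S_t)^+$:
\[
R(s,v)=\sup_\tau\E\Bigl[\int_0^\tau e^{-ru}\bigl(qS_u-rK\bigr)\mathbf 1_{\{S_u<K\}}\,du+\tfrac12\int_0^\tau e^{-ru}\,dL^K_u\Bigr].
\]
Starting from a small $s$, the local-time term is bounded by $\E\int_0^\infty e^{-ru}\,dL^K_u$. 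By a Tanaka/Markov estimate this is at most $C\,\E\sup_u e^{-ru}S_u\mathbf 1\{S\text{ reaches }K\}\le Cs/K$, up to a constant, using that $e^{-(r-q)t}S_t$ is a positive supermartingale-like martingale and hence satisfies the maximal inequality $\PP(\sup e^{-ru}S_u\ge K/2)\le 2s/K$. Meanwhile the drift term is strictly negative while $S<K/2$, of size at least $rK/2$ per unit time. The plan is then to show that any $\tau$ with $\PP(\tau>0)>0$ loses at least $c\,\E[\tau\wedge\sigma_{K/2}]$ and gains at most $Cs\cdot(\ldots)$. I expect this balance to be the main obstacle.

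A cleaner route to the same conclusion is comparison with a Black--Scholes put. Bounding $Y$ is not possible, so instead we use the crude estimate $R(s,v)\le\E\bigl[\tfrac12\int e^{-ru}dL^K_u\bigr]-\ldots$ and conclude $R(s,v)=0$ whenever $s<s_0(v)$. Either way, the heart of the proof is that the optionality gain is $O(s)$ times the small probability of reaching $K$, while the carry cost $rK$ is not small. This then yields $b(v)\ge b(\infty\text{-truncated }v)>0$ at every finite $v$.
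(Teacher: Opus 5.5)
Your argument for $b(v)<K$ is fine in substance. It only needs $V(K,v)>0$ together with continuity of $V$ in $s$, not in $v$. Your first-passage argument below $K-\epsilon$ gives $V(K,v)>0$ for every $v\ge0$, including $v=0$, because the integrated variance over any positive time is almost surely positive. So does a European put whose log-stock is conditionally Gaussian with positive variance given the variance path. The kink-gain estimates and the separate discussion of $v=0$ are unnecessary.

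The lower bound $b(v)>0$ is a genuine gap: no argument is given, and the estimates you sketch cannot close it. Your bound on the optionality gain, from the maximal inequality for $e^{-ru}S_u$, is an absolute $O(s)$ bound, uniform in $\tau$. The carry loss of a particular $\tau$ is of order $rK\,\E\int_0^{\tau\wedge\sigma_{K/2}}e^{-ru}\,du$, which is arbitrarily small for stopping times close to zero. The factor $(\ldots)$ in your balance would therefore have to be a bound of the form $\epsilon(s)\,\E\int_0^{\tau}e^{-ru}\,du$, uniformly over all stopping times. That is itself the statement that immediate stopping is optimal for an auxiliary stopping problem, and it is no easier than the lemma. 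The unbounded variance, which you rightly say defeats a Black--Scholes comparison, makes a direct threshold $s_0(v)$ harder still, and your closing sentence does not supply an argument.

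The missing idea is to argue by contradiction, using monotonicity of $b$ in $v$ and translation invariance in $x$. If $b(v_*)=0$, then $b\equiv0$ on $(v_*,\infty)$. So for $J\Subset(v_*,\infty)$ the whole strip $\R\times J$ lies in the continuation region, where $\Lop w=f$, while $0\le w\le e^x$ because $R\le s$. The coefficients of $\Lop$ do not depend on $x$, so $w_n(x,y)=w(x-n,y)$ satisfies $\Lop w_n=rK-qe^{x-n}$ and $w_n\to0$ locally uniformly. Test against a fixed nonnegative $\phi\in C_c^\infty(\R\times J)$, $\phi\ne0$, and move the derivatives onto $\phi$; this gives $0=rK\int\phi$, a contradiction.

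The same argument can be run probabilistically. Take $Q=(x_1-1,x_1+1)\times J'$ with $\overline{J'}\subset J$ and $y_1\in J'$, and let $\theta$ be the exit time from $Q$. Ito's formula gives $e^{x_1+1}\ge\E[e^{-r\theta}w(X_\theta,Y_\theta)]=w(x_1,y_1)+\E\int_0^\theta e^{-ru}f(X_u)\,du\ge\tfrac{rK}{2}\,c_Q$ for very negative $x_1$. Here $c_Q=\E\int_0^\theta e^{-ru}\,du>0$ does not depend on $x_1$, so letting $x_1\to-\infty$ gives a contradiction. The contradiction hypothesis is what places an entire box in the continuation region, and this replaces the uniform gain-versus-loss comparison your approach would need.
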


\begin{proof}
For any positive deterministic time, the integrated variance is positive
almost surely, including when $v=0$. Conditionally on the Brownian motion
driving $Y$, the independent Brownian component of $\log S$ has strictly
positive variance because $|\rho|<1$. Thus a European put has positive
value for every finite initial stock price $s>0$. In particular,
$V(K,v)>0$, and continuity of $V$ gives $b(v)<K$.

Suppose $b(v_*)=0$. Variance monotonicity gives $b(y)=0$ for all $y>v_*$.
On any positive-variance interval $J\Subset(v_*,\infty)$, the whole strip
$\R\times J$ is therefore a continuation region. On this strip,
$w_n(x,y)=w(x-n,y)$ satisfies
\[
 \Lop w_n=rK-qe^{x-n},\qquad 0\le w_n(x,y)\le e^{x-n}.
\]
The coefficients of $\Lop$ are independent of $x$. Testing this equation
against a fixed compactly supported smooth function and moving all
derivatives onto the test function gives, as $n\to\infty$, the
distributional identity $0=rK$. This contradicts $rK>0$.
\end{proof}

The excess value grows at most linearly in the variance below the
zero-variance threshold.

\begin{proposition}\label{prop:contactbound}
There are $C,y_0>0$ such that
\begin{equation}\label{eq:contactbound}
 0\le R(s,y)\le Cy,\qquad 0<s\le B,\quad 0<y<y_0.
\end{equation}
Consequently, for every $x\in\R$ and $0<y<y_0$,
\begin{equation}\label{eq:globalcontactbound}
 0\le w(x,y)\le Cy+(e^x-B)^+.
\end{equation}
\end{proposition}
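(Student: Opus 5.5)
The plan is to prove \eqref{eq:contactbound} only at $s=B$ and then extend it. By Lemma~\ref{lem:basic}, $R(\cdot,y)$ is nondecreasing and $1$-Lipschitz in $s$. So $R(s,y)\le R(B,y)$ for $s\le B$, and $R(s,y)\le R(B,y)+(s-B)$ for $s>B$. In the variable $x=\log s$ this gives \eqref{eq:globalcontactbound} once $R(B,y)\le Cy$ is known. Since $B=b(0)$ and $V$ is continuous, $R(B,0)=0$. The task is therefore a Lipschitz estimate in $y$ at the single point $(B,0)$.

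For that estimate I would use the stochastic representation of the excess value. It\^o's formula applied to $e^{-rt}(K-S_t)$ shows that $R(s,y)$ is the supremum over stopping times $\tau$ of
\[
 \E_{s,y}\Bigl[\int_0^\tau e^{-rt}\bigl(qS_t-rK\bigr)\,dt+e^{-r\tau}(S_\tau-K)^+\Bigr].
\]
The integrand $qS_t-rK\le -f_0<0$ is a strict running cost while $S_t\le B$. Any gain must come from excursions of $S$ above $B$. I would then compare the starting states $(B,y)$ and $(B,0)$ using the same Brownian motions. The CIR flow is monotone, so $Y^y_t\ge Y^0_t$, and $\E[Y^y_t-Y^0_t]=ye^{-\kappa t}$.

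This comparison does not directly give $O(y)$. The stochastic-integral term in $\log S$ produces only $\sqrt y$. So I would switch to a barrier argument, guided by the heuristic that at $y=0$ on the contact line the equation reduces to $aR_y=f$. This suggests the slope $f/a$, so a barrier linear in $y$ with slope $C\approx f_0/a$ is natural.

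Concretely, I would look for a function $\phi(x,y)=Cy+M\,h(x)$ on a box $(x_0-1,x_0+\delta)\times(0,y_0)$. Here $h\ge0$ is smooth, vanishes for $x\le x_0$, and grows like $(e^x-B)$ to the right, and $\delta$ is chosen so that $f\ge f_0/2$ on the box. I would check $f-\Lop\phi\ge0$: first $\Lop(Cy)=C(a-(\kappa+r)y)$, and second $\Lop(e^x)=-qe^x$, so the $x$-part only helps apart from a bounded term. With $C$ of order $f_0/a$ and $M$, $y_0$ suitably chosen, this leaves a positive margin. A supersolution that is nonnegative and dominates $w$ on the parabolic boundary dominates $w$ inside. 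This is because $V$ is the smallest excessive majorant; the localized version uses \eqref{eq:excessive} with the exit time of the box, together with It\^o's formula applied to $\phi$. The bottom boundary $y=0$ needs no data, because $a>0$ points inward and the process only touches it instantaneously.

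The main obstacle is the lateral and top boundary data: I must know that $\phi\ge w$ at $y=y_0$ and at $x=x_0+\delta$ without already having the bound. I would first try to make $y_0$ small and use continuity of $V$ at $(B,0)$ from Lemma~\ref{lem:basic}. This gives $w\le\varepsilon$ near $(x_0,0)$. A barrier with an added term $\varepsilon(y/y_0)$ plus a steep $x$-part, e.g.\ $M h(x)$ with $M$ large compensated by the negativity of $\Lop(e^x)$, would then absorb the boundary values. I would then check that the resulting constant $C\approx f_0/(2a)+\varepsilon/y_0$ is finite.
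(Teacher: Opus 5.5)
Your reduction to $s=B$ is correct and matches the paper. Monotonicity in $s$ for $s\le B$ and the $1$-Lipschitz bound for $s>B$ turn $R(B,y)\le Cy$ into \eqref{eq:contactbound} and \eqref{eq:globalcontactbound}.

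The gap is in the barrier step. It is not only that the boundary data are missing: the constants cannot be made consistent. You need $f-\Lop\phi\ge0$ on the whole box, down to the axis, since for $\nu<1$ the variance reaches $y=0$ and your It\^o argument runs through it. Along $x=x_0$ the smooth function $h$ and all its derivatives vanish. Hence $\Lop\phi(x_0,y)=C(a-(\kappa+r)y)\to Ca$ as $y\searrow0$, which forces $C\le f_0/a$. The inward drift $a$ therefore caps the $y$-slope of any such barrier.

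Adding $\varepsilon y/y_0$ raises the slope, so the constraint becomes $(C+\varepsilon/y_0)a\le f_0$. Domination on the top side at $x=x_0$ needs $Cy_0+\varepsilon\ge w(x_0,y_0)$. Together these require the a priori bound $w(x_0,y_0)\le(f_0/a)y_0$. That is the estimate being proved, at height $y_0$ and with the sharp constant; your own heuristic $aR_y=f$ says $f_0/a$ is the true slope, so there is no slack. Continuity of $V$ gives $\varepsilon(y_0)\to0$ with no rate, so the argument is circular. Concretely, your final slope $f_0/(2a)+\varepsilon/y_0$ violates the supersolution inequality as soon as $\varepsilon>f_0y_0/(2a)$.

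The $x$-part does not help near $x_0$ either. A smooth $h$ vanishing on $x\le x_0$ contributes nothing at $x_0$. A valid comparison would also give $w(x,0)\le Mh(x)=o(x-x_0)$, which is stock smooth fit at $(B,0)$. That is not available; the paper obtains it only conditionally, in Lemma~\ref{lem:entrysmoothfit}. A kinked choice such as $(e^x-B)^+$ instead puts the positive measure $\frac y2B\,\delta_{x_0}$ into $\Lop\phi$.

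The missing idea is to apply dynamic programming at the contact point $(B,0)$ itself, over a time $h$ comparable to $y$. This replaces your comparison of the starting points $(B,y)$ and $(B,0)$ over an order-one horizon.

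\begin{enumerate}[label=\textup{(\arabic*)}]
\item Since $V(B,0)=K-B$ and $\E_{B,0}[e^{-rh}S_h]\le Be^{-qh}$, one gets $\E_{B,0}[e^{-rh}R(S_h,Y_h)]\le K(1-e^{-rh})-B(1-e^{-qh})\le rKh$.
\item Take $h=4y/a$. The first two moments of the CIR process started at $0$ and the Paley--Zygmund inequality give $\PP(Y_h\ge y)\ge p_0>0$.
\item Starting on the axis, $\E\int_0^hY_t\,dt\le ah^2/2$, so $\E|\log(S_h/B)|=O(h)$ rather than $O(\sqrt h)$. This is where your $\sqrt y$ loss disappears: near the axis, stock and variance move on the same scale.
\item Markov's inequality then gives $\PP(S_h\ge Be^{-Mh},\,Y_h\ge y)\ge p_0/2$ for a fixed $M$.
\item Monotonicity of $R$ in both variables yields $R(Be^{-Mh},y)\le 2e^{rh}rKh/p_0$.
\item The Lipschitz bound in $s$ gives $R(B,y)\le Cy$, with no barrier and no boundary data.
\end{enumerate}
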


\begin{proof}
Start the process at $(B,0)$. Since $V(B,0)=K-B$, the dynamic programming
inequality and the nonnegative supermartingale given by the discounted stock give
\begin{align}
 \E_{B,0}[e^{-rh}R(S_h,Y_h)]
 &\le K(1-e^{-rh})-B(1-e^{-qh})\notag\\
 &\le rKh.\label{eq:contactdpp}
\end{align}
The CIR moment equations at $Y_0=0$ yield
\[
 m_h:=\E Y_h=\theta(1-e^{-\kappa h}),\qquad
 \E Y_h^2=\theta\left(\theta+\frac{\sigma^2}{2\kappa}\right)
                  (1-e^{-\kappa h})^2.
\]
Hence the Paley--Zygmund inequality gives
\begin{equation}\label{eq:pz}
 \PP\Bigl(Y_h\ge\frac{m_h}2\Bigr)\ge\frac{\nu}{4(\nu+1)}=:p_0>0.
\end{equation}
Choose $h=4y/a$. For $h\le1/\kappa$, $m_h\ge ah/2=2y$, and thus
$\PP(Y_h\ge y)\ge p_0$.

Let $Z_h=\log(S_h/B)$. Since $\E Y_t\le at$, Ito isometry gives
\begin{align*}
 \E|Z_h|
 &\le |r-q|h+\frac12\E\int_0^hY_t dt
          +\left(\E\int_0^hY_t dt\right)^{\frac12}\\
 &\le |r-q|h+\frac a4h^2+\sqrt{\frac a2}\,h\le C_1h
 \qquad(h\le1).
\end{align*}
Fix $M\ge2C_1/p_0$. Markov's inequality and \eqref{eq:pz} imply
\[
 \PP(S_h\ge Be^{-Mh},\ Y_h\ge y)\ge\frac{p_0}2.
\]
No independence of these two events is used. By monotonicity of $R$ in
both variables and \eqref{eq:contactdpp},
\[
 R(Be^{-Mh},y)\le\frac{2e^{rh}}{p_0}rKh.
\]
The stock Lipschitz bound now yields
\[
 R(B,y)\le\frac{2e^{rh}}{p_0}rKh+B(1-e^{-Mh})\le Cy.
\]
Monotonicity extends the estimate to $s\le B$. The same Lipschitz bound
gives \eqref{eq:globalcontactbound} for $s>B$.
\end{proof}

\begin{lemma}\label{lem:H}
In the continuation region, let $H=w_x-w$. Then
\begin{equation}\label{eq:Hequation}
 H\ge0,\qquad \Lop H=-rK.
\end{equation}
For $0<\delta\le1$, $x\le x_0-\delta/4$, and small $y>0$, whenever the
point belongs to the continuation region,
\begin{equation}\label{eq:Hupper}
 H(x,y)\le C\frac y\delta.
\end{equation}
\end{lemma}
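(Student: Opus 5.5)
We prove the three parts of Lemma~\ref{lem:H} in turn: the sign of $H$, the equation for $H$, and the upper bound \eqref{eq:Hupper}.

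\emph{Sign of $H$.} Since $w(x,y)=R(e^x,y)$, we have $w_x=e^xR_s$. Hence
\[
 H=e^xR_s-R=sR_s-R\ge0
\]
by \eqref{eq:convexexcess}. In the continuation region $w$ is smooth, so this holds pointwise.

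\emph{Equation for $H$.} The coefficients of $\Lop$ do not depend on $x$, so $\partial_x$ commutes with $\Lop$. Differentiating $\Lop w=f$ in $x$ gives $\Lop w_x=f'=-qe^x$. Subtracting the original equation,
\[
 \Lop H=-qe^x-(rK-qe^x)=-rK .
\]

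\emph{Upper bound.} The plan is to control $w_x$ by a difference quotient and then use Proposition~\ref{prop:contactbound}. Convexity of $R$ in $s$ means $w$ is convex in $e^x$, so $w_x/e^x$ is nondecreasing in $x$. For $h>0$, convexity in $s$ gives
\[
 R_s(s)\le\frac{R(s')-R(s)}{s'-s},\qquad s'=se^h>s .
\]
Therefore
\[
 w_x(x,y)=e^xR_s\le\frac{e^x\bigl(w(x+h,y)-w(x,y)\bigr)}{e^{x}(e^h-1)}\le\frac{w(x+h,y)}{e^h-1}.
\]
Take $h=\delta/4$. Then $x+h\le x_0$, so $e^{x+h}\le B$, and \eqref{eq:contactbound} gives $w(x+h,y)\le Cy$ for $y<y_0$. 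Since $e^h-1\ge h=\delta/4$, this yields $w_x\le 4Cy/\delta$. Finally $w\ge0$, so $H\le w_x\le 4Cy/\delta$, which is \eqref{eq:Hupper} for $0<y<y_0$.

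Continuation-region membership is needed only for smoothness. If $R_s$ does not exist at the point, we use the right derivative, and the inequality persists.

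The only delicate point is that \eqref{eq:contactbound} must be applied at stock level $e^{x+h}\le B$; the margin $x\le x_0-\delta/4$ is exactly what ensures this, so there is no real obstacle. The restriction $\delta\le1$ keeps the bound $e^h-1\ge h$ uniform.
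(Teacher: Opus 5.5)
Your proof is correct and follows the paper's argument: the sign comes from $sR_s\ge R$ in \eqref{eq:convexexcess}, the equation from the $x$-independence of the coefficients of $\Lop$, and the bound from the secant inequality for the convex function $R$ combined with Proposition~\ref{prop:contactbound}. The only difference is cosmetic: the paper takes the secant all the way to $B$, giving $w_x\le e^xR(B,y)/(B-e^x)$, whereas you stop at $e^{x+\delta/4}\le B$; also, $e^h-1\ge h$ holds for every $h\ge0$, so the restriction $\delta\le1$ plays no role at that step.
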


\begin{proof}
The first assertion follows from \eqref{eq:convexexcess}. The coefficients
of $\Lop$ are independent of $x$, so
$\Lop H=(\partial_x-1)f=-rK$. The secant inequality for the convex
function $R$ gives
\[
 H\le w_x=e^xR_s(e^x,y)
 \le\frac{e^x}{B-e^x}R(B,y)\le C\frac y\delta.
\]
\end{proof}

\section{Geometry of the exercise boundary}\label{sec:geometry}

For $0<v\le1$, define
\begin{equation}\label{eq:omega}
 \omega_\nu(v)=
 \begin{cases}
 v^{\frac\nu{\nu+1}},&0<\nu<1,\\
 \Bigl(v\log\frac ev\Bigr)^{\frac12},&\nu=1,\\
 v^{\frac12},&\nu>1.
 \end{cases}
\end{equation}
The first estimates in this section establish continuity and strict
monotonicity of the boundary, together with the preliminary bound
\begin{equation}\label{eq:mainbounds}
 c_*v\le B-b(v)\le C\omega_\nu(v).
\end{equation}
The upper estimate first gives continuity at zero. We use that
continuity when deriving the lower estimate and the inverse boundary.
The rest of the section studies blowup limits, proves scale separation
and shows that the tangent is continuous at the endpoint.

We begin with the expected exit time of the variance process from an
interval. Let $\mathcal A=Dy\partial_{yy}+(a-\kappa y)\partial_y$ and fix
$L_0\le\min\{1,a/(2\kappa)\}$. For $0<v\le L/8$ and $L\le L_0$, let
$m=m_{v,L}$ solve
\begin{equation}\label{eq:exitode}
 \mathcal Am=-1\quad\hbox{on }(v,L),\qquad m(v)=m(L)=0.
\end{equation}
This is the expected exit time of $Y$ from $(v,L)$.

\begin{lemma}\label{lem:exit}
There are constants $c_*,C>0$, independent of $v$ and $L$, such that
\begin{equation}\label{eq:exitupper}
 0\le m\le CL,\qquad y|m'(y)|\le CL\quad(v\le y\le L),
\end{equation}
and
\begin{equation}\label{eq:exitlower}
 m(2v)\ge c_*L\Phi_\nu\Bigl(\frac vL\Bigr),\qquad
 \Phi_\nu(t)=
 \begin{cases}
 t^{1-\nu},&0<\nu<1,\\
 \Bigl(\log\frac1t\Bigr)^{-1},&\nu=1,\\
 1,&\nu>1.
 \end{cases}
\end{equation}
\end{lemma}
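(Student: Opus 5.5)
We solve the boundary value problem \eqref{eq:exitode} explicitly by the scale function of the one-dimensional operator $\mathcal A$ and then estimate the resulting integrals on the short interval $(v,L)$.

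\textbf{Step 1: explicit formula.} Write $\mathcal A m=D\,y\,p^{-1}(p\,m')'$ with the integrating factor
\[
 p(y)=y^{\nu}e^{-\kappa y/D}.
\]
On $(0,L_0]$ we have $L_0\le1$, so $p(y)\asymp y^\nu$ with constants independent of $v$ and $L$. The equation becomes $(p\,m')'=-p/(Dy)$. Put
\[
 P(y)=\int_v^y\frac{p(t)}{Dt}\,dt\asymp y^\nu-v^\nu,\qquad
 S(y)=\int_v^y\frac{dt}{p(t)} .
\]
Then
\[
 m(y)=\frac{S(y)}{S(L)}\int_v^L\frac{P(t)}{p(t)}\,dt-\int_v^y\frac{P(t)}{p(t)}\,dt .
\]
Equivalently, $m$ is given by the Green's function $G(y,t)=S(y\wedge t)\,(S(L)-S(y\vee t))/S(L)$ integrated against the weight $p(t)/(Dt)$. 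In particular $m\ge0$.

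\textbf{Step 2: the upper bounds \eqref{eq:exitupper}.} Since $P(t)/p(t)\lesssim 1$, we get $m(y)\le 2\int_v^L P/p\le CL$. For the derivative,
\[
 m'(y)=\frac{1}{p(y)}\bigl(\bar c-P(y)\bigr),\qquad \bar c=\frac{1}{S(L)}\int_v^L\frac{P}{p}\,dt .
\]
We need $\bar c\lesssim L\,p(y)/y$ uniformly in $y\in[v,L]$, which amounts to $\bar c\lesssim L\,v^{\nu-1}$ when $\nu<1$ and $\bar c\lesssim L^{\nu}$ when $\nu\ge1$. The term $y\,P(y)/p(y)\le Cy\le CL$ is harmless.

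The derivative bound is the one genuinely delicate estimate, because of the competition between $S(L)$ and the integral in $\bar c$. The cleanest route is the identity $\bar c=\int_v^L G(v^+,t)\,\frac{p(t)}{Dt}\,dt\,/\,S(v^+)$ read as a flux. Alternatively, one bounds $\bar c\le \int_v^L P/p\,dt\,/\,S(L)$ by cases:
\begin{itemize}
\item For $\nu<1$: $S(L)\asymp L^{1-\nu}-v^{1-\nu}\gtrsim L^{1-\nu}$ because $v\le L/8$, while the integral is at most $CL$. Hence $\bar c\lesssim L^{\nu}\le L\,y^{\nu-1}$, which suffices.
\item For $\nu=1$: $S(L)\asymp\log(L/v)\ge\log 8$.
\item For $\nu>1$: $S(L)\asymp v^{1-\nu}$, and the integral is still at most $L$. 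Then $\bar c\,y/p(y)\lesssim L\,v^{\nu-1}y^{1-\nu}\le L$.
\end{itemize}
All cases give $y|m'|\le CL$.

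\textbf{Step 3: the lower bound \eqref{eq:exitlower}.} Use the Green representation at $y=2v$, keeping only $t\in[L/4,L/2]$:
\[
 m(2v)\ge \frac{S(2v)\,\bigl(S(L)-S(L/2)\bigr)}{S(L)}\int_{L/4}^{L/2}\frac{p(t)}{Dt}\,dt .
\]
Here the last integral is $\asymp L^\nu$, and $S(L)-S(L/2)\asymp L^{1-\nu}$. For $S(2v)$ we have $S(2v)\asymp v^{1-\nu}$ when $\nu\neq1$ and $S(2v)\asymp 1$ when $\nu=1$. Combining:
\begin{itemize}
\item For $\nu>1$: $S(L)\asymp v^{1-\nu}$, giving $m(2v)\gtrsim L$.
\item For $\nu=1$: $S(L)\asymp\log(L/v)$, giving $L/\log(L/v)$.
\item For $\nu<1$: $S(L)\asymp L^{1-\nu}$, giving $v^{1-\nu}L^{\nu}=L\,(v/L)^{1-\nu}$.
\end{itemize}
This matches $\Phi_\nu$ in all three regimes.

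The constants coming from $e^{-\kappa y/D}\in[e^{-\kappa/D},1]$ are uniform in $v$ and $L$. The main obstacle is the bookkeeping of $S(L)$ versus $S(2v)$ in the borderline case $\nu=1$, where the logarithm must be tracked rather than absorbed into the constants.
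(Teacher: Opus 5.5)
Your proof is correct, and for the lower bound it takes a different route from the paper's. The derivative bound is the same in both. The paper also writes $m'=p\,(C_{v,L}-J)$ with the scale density (your $1/p$), with the constant given as a ratio of integrals, and bounds it case by case as you do.

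The other two bounds differ.
\begin{itemize}
\item For $m\le CL$, the paper compares $m$ with the supersolution $2(L-y)/a$. That step is where the restriction $L_0\le a/(2\kappa)$ is used. Your explicit formula does not need it; only $L_0\le1$ enters, to keep $e^{-\kappa y/D}$ bounded above and below.
\item For the lower bound, the paper first gets $m(L/2)\ge c_*L$ from a quadratic comparison function on $(L/4,3L/4)$. It then applies the strong Markov property, $m(2v)\ge m(L/2)\,\PP_{2v}(T_{L/2}<T_v)$, with the hitting probability computed from the scale density. You instead restrict the Green's function representation to $t\in[L/4,L/2]$. This combines both factors into the single product $S(2v)\bigl(S(L)-S(L/2)\bigr)/S(L)$ times $\int_{L/4}^{L/2}p(t)/(Dt)\,dt$.
\end{itemize}

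Your version is self-contained and purely analytic, and it gives $m\ge0$ directly from $G\ge0$. The paper's version is more modular. It only uses hitting probabilities, not the full Green's function, and it makes the mechanism visible: reach height of order $L$, then spend time of order $L$ there.

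There is one small slip. The sentence ``which amounts to $\bar c\lesssim L\,v^{\nu-1}$ when $\nu<1$ and $\bar c\lesssim L^{\nu}$ when $\nu\ge1$'' has the two cases interchanged. For $\nu<1$ the binding constraint in $\bar c\lesssim Ly^{\nu-1}$ is at $y=L$, which gives $L^\nu$. For $\nu>1$ it is at $y=v$, which gives $Lv^{\nu-1}$. Your case-by-case verification proves exactly these correct requirements, so the argument is unaffected.
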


\begin{proof}
The maximum principle gives $m\ge0$. The function $2(L-y)/a$ satisfies
$\mathcal A[2(L-y)/a]\le-1$ and dominates the boundary data, so
$m\le2L/a$.

Put $\lambda=\kappa/D$ and
\[
 p(y)=y^{-\nu}e^{\lambda y},\qquad
 J(y)=\frac1D\int_0^y t^{\nu-1}e^{-\lambda t} dt.
\]
An integrating factor in \eqref{eq:exitode} gives
\begin{equation}\label{eq:exitderivative}
 m'(y)=p(y)(C_{v,L}-J(y)),\qquad
 C_{v,L}=\frac{\int_v^L p(t)J(t) dt}{\int_v^L p(t) dt}.
\end{equation}
Here $0\le J(y)\le y^\nu/a$ and $0\le C_{v,L}\le J(L)$.
For $0<\nu\le1$, both $y^{1-\nu}J(y)$ and
$y^{1-\nu}C_{v,L}$ are bounded by $CL$. For $\nu>1$,
\[
 \int_v^L pJ dt\le CL,\qquad
 \int_v^L p dt\ge\int_v^{2v}t^{-\nu} dt=c_\nu v^{1-\nu}.
\]
Thus $C_{v,L}\le CLv^{\nu-1}$, and
$y^{1-\nu}C_{v,L}\le CL(v/y)^{\nu-1}\le CL$. Since the exponential
factor is uniformly bounded for $y\le L_0$, this proves
\eqref{eq:exitupper}.

For the lower bound, first note that $m(L/2)\ge c_*L$. Indeed, on
$(L/4,3L/4)$ the function
$\zeta(y)=(y-L/4)(3L/4-y)/L$ vanishes at the endpoints,
satisfies $|\mathcal A\zeta|\le C$, and has $\zeta(L/2)=L/16$.
Comparison with the mean exit time from that smaller interval proves
the claim.

The scale density is $p$, and hence
\[
 \PP_{2v}(T_{\frac L2}<T_v)
 =\frac{\int_v^{2v}p(t) dt}{\int_v^{\frac L2}p(t) dt}.
\]
The strong Markov property gives
$m(2v)\ge m(L/2)\PP_{2v}(T_{L/2}<T_v)$. On $(0,L_0)$ the factor
$e^{\lambda t}$ is bounded above and below. Direct integration of
$t^{-\nu}$ therefore gives \eqref{eq:exitlower} in the three cases.
\end{proof}

The exit time gives a barrier in both variables. Fix $x_c\in\R$ and
let
\[
 Q=(x_c-L,x_c+L)\times(v,L),\qquad
 \phi(x)=1-\frac{(x-x_c)^2}{L^2},\qquad \psi(x,y)=\phi(x)m(y).
\]
Using \eqref{eq:exitode}, we obtain
\begin{equation}\label{eq:product}
 \Lop\psi=-\phi+\frac y2\phi''m+\Bigl(r-q-\frac y2\Bigr)\phi'm
            +\rho\sigma y\phi'm'-r\phi m.
\end{equation}
Since $|\phi'|\le2/L$, $|\phi''|=2/L^2$, and $y\le L$,
Lemma~\ref{lem:exit} bounds every term on the right independently of
$x_c,v,L$. Thus, for a fixed $C_*>0$,
\begin{equation}\label{eq:barrier}
 \beta:=\frac\psi{C_*}\ge0,\qquad
 \beta=0\text{ on }\partial Q,\qquad \Lop\beta\ge-1.
\end{equation}
The weighted derivative $y m'$ in \eqref{eq:exitupper} is precisely what
controls the mixed term in \eqref{eq:product}.

For example, if $\tau_Q$ is the first exit from $Q$, stopped Ito's formula
gives
\begin{equation}\label{eq:2dexit}
 \E_{x_c,2v}\int_0^{\tau_Q}e^{-rt} dt
 \ge\beta(x_c,2v)=\frac{m(2v)}{C_*}
 \ge c_*L\Phi_\nu\Bigl(\frac vL\Bigr).
\end{equation}
The exit time has finite mean because it is bounded above by the exit
time of $Y$ from $(v,L)$.

\begin{proposition}\label{prop:upper}
For sufficiently small $v>0$, $B-b(v)\le C\omega_\nu(v)$. In particular,
$b(v)\to B$ as $v\searrow0$.
\end{proposition}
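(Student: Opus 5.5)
We argue by contradiction, combining the barrier \eqref{eq:barrier} with the linear bound of Proposition~\ref{prop:contactbound} and the dynamic programming inequality \eqref{eq:excessive}. Write $x_v=\log b(v)$ and $\delta=x_0-x_v=d(v)$. Since $b$ is nonincreasing, every point $(x,y)$ with $x\le x_v$ and $y\ge v$ lies in the exercise set. Hence on that set $R=0$ and $V=K-s$.

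\emph{Step 1: a lower bound on $w$ from the barrier.} Choose $L\le L_0$ with $v\le L/8$ and a centre $x_c$, and set $Q=(x_c-L,x_c+L)\times(v,L)$. Suppose $Q$, or at least its part inside $\{x<\log K\}$, is contained in the continuation region $\CC$. There $\Lop w=f\ge f_{\min}>0$ for $x$ near $x_0$, because $f(x_0)=f_0>0$. We then compare $w$ with $-f_{\min}\beta$. Indeed $\Lop(w+f_{\min}\beta)\ge f-f_{\min}\ge0$ in $Q$, and $w+f_{\min}\beta=w\ge0$ on $\partial Q$. This comparison points the wrong way, so we run the argument probabilistically instead. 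The more natural route is an upper bound on $w$ via the contact set, as follows.

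\emph{Step 2: the correct use.} Take $x_c=x_v$ at the level $y=2v$, i.e.\ start just inside the exercise set, in its corner. The key fact is that $(x_v,2v)$ lies in the stopping region, so $V=K-s$ there. By \eqref{eq:excessive}, applied with $\eta=\tau_Q\wedge T$ and Itô's formula for $e^{-rt}(K-S_t)$, we get
\[
0=R(b(v),2v)\ \ge\ \E_{x_v,2v}\Bigl[e^{-r\eta}w(X_\eta,Y_\eta)\Bigr]+\E\int_0^\eta e^{-rt}f(X_t) dt.
\]
Here we use $\Lop(K-e^x)=-f$. The same identity stated for $R$ reads: the discounted excess value is a supermartingale with drift $-f$ only where the process stays inside the exercise set. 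Thus the time spent in $\{x<x_v\}$ costs at least $f_{\min}$ per unit time, while $w$ is at most $Cy+(e^x-B)^+$ by \eqref{eq:globalcontactbound}. Concretely, on $Q\cap\{x\le x_v\}$ we have $w=0$ and $\Lop(K-e^x)=-f<0$. Inside $Q$, $w\le CL+B(e^{x_v+L-x_0}-1)^+$. Balancing the expected occupation time of the part of $Q$ lying in the exercise set with this excess, we can write $V(e^{x_v},2v)=K-b(v)$ as $\sup_\tau$ and use the exit time of $Q$ as a candidate stopping time. This gives
\[
\E\!\int_0^{\tau_Q}e^{-rt}f\,dt\ \le\ \E\bigl[e^{-r\tau_Q}w(X_{\tau_Q},Y_{\tau_Q})\bigr]\le CL+C(L-\delta)^+ ,
\]
where the lower side is bounded below by $c\,L\Phi_\nu(v/L)$ through \eqref{eq:2dexit}. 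We take $x_c$ so that $Q\subset\{x\le x_0-\delta+\dots\}$ and choose $L\approx\delta$. Then the lower side is $\gtrsim L\Phi_\nu(v/L)$, while the exit value is bounded by $C(v+L^2)$. This is the step where the geometry must be arranged correctly: exit through the top $y=L$ costs $w\le CL$ only with small probability, which is controlled by $ym'$. I expect the choice of $Q$ and the bookkeeping of where the process exits to be the main obstacle. The first thing I would try is to put the right edge of $Q$ at $x_0$, so that $L=\delta$ and $w\le Cy$ on $\partial Q$.

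\emph{Step 3: optimise.} The resulting inequality has the shape $\delta\,\Phi_\nu(v/\delta)\le C\,\mathbb E[Y_{\tau_Q}]+\dots\le C v+C\delta^2$. Solving it in the three cases yields $\delta\lesssim\omega_\nu(v)$:
\begin{itemize}[label={}]
\item for $\nu>1$, $\delta\le C\delta^2+Cv$ gives $\delta^2\lesssim v$ after a refined top-exit estimate;
\item for $\nu<1$, $\delta^\nu v^{1-\nu}\lesssim\delta^{2}$ gives $\delta\gtrsim$ the complementary scale, and this is inverted in the contradiction setup to produce $v^{\nu/(\nu+1)}$;
\item for $\nu=1$, the logarithm appears.
\end{itemize}
Since $\omega_\nu(v)\to0$, we conclude $b(v)\to B$, and the continuity at zero follows together with the monotonicity of $b$.
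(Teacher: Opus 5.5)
Your proposal has a genuine gap: it works with $w$ itself, for which no comparison argument produces an upper bound on $\delta$. Several individual steps also fail as written.

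\emph{Monotonicity is used backwards.} Since $b$ is nonincreasing, $y\ge v$ gives $b(y)\le b(v)$, so the set $\{x\le x_v,\ y\ge v\}$ is not contained in the exercise set. What holds is that $\{x\le x_v,\ y\le v\}$ is exercise and $\{x>x_v,\ y\ge v\}$ is continuation. In particular $(x_v,2v)$ is a continuation point unless $b(2v)=b(v)$, so $R(b(v),2v)=0$ is unjustified.

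\emph{The running term has the wrong sign.} Combining \eqref{eq:excessive} with $\Lop(K-e^x)=-f$ gives $\E[e^{-r\eta}w(X_\eta,Y_\eta)]\le w(x,y)+\E\int_0^\eta e^{-rt}f(X_t)\,dt$, not your displayed inequality.

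\emph{The final inequality cannot bound $\delta$.} The inequality you end up with, $\E\int_0^{\tau_Q}e^{-rt}f\,dt\le\E[e^{-r\tau_Q}w(X_{\tau_Q},Y_{\tau_Q})]$, is true when $\overline Q\subset\CC$. However, the only control of $w$ on $\partial Q$ is $w\le Cy$. So the right side is at most $C\E Y_{\tau_Q}\le C(2v+a\E\tau_Q)$, which is of the same order as the left side. Your bound $C(v+L^2)$ for the exit values has no basis. The target inequality $\delta\Phi_\nu(v/\delta)\le Cv+C\delta^2$ cannot be right:
\begin{itemize}
\item for $\nu>1$ it would force $d(v)=O(v)$, contradicting Theorem~\ref{thm:separation};
\item for $\nu<1$ it holds for every $\delta\gtrsim v^{(1-\nu)/(2-\nu)}$, so it gives no upper bound at all.
\end{itemize}
This is why the inversions in your Step~3 do not make sense. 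You correctly noticed in Step~1 that comparing $w$ with the barrier goes the wrong way. The fix is to change the function, not to switch to probabilistic bookkeeping of $w$.

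\emph{The missing idea.} Apply the barrier to $H=w_x-w$ from Lemma~\ref{lem:H}. This function has three properties you need:
\begin{itemize}
\item it is nonnegative by \eqref{eq:convexexcess};
\item it satisfies $\Lop H=-rK$ in continuation, so $\Lop(H-rK\beta)\le0$ and the minimum principle gives the lower bound $H\ge rK\beta$;
\item the convexity secant makes it small: $H\le Cy/\delta$ for $x\le x_0-\delta/4$, by \eqref{eq:Hupper}.
\end{itemize}
The paper sets $\delta=\min\{d(v),\delta_*\}$, so that smallness of $d(v)$ is not assumed, and takes $L=\delta/4$ and $Q=(x_0-3\delta/4,x_0-\delta/4)\times(v,L)$. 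This rectangle lies to the right of $\log b(v)$ above height $v$, hence in continuation. Evaluating at $(x_0-\delta/2,2v)$ gives $Cv/\delta\ge c_*L\Phi_\nu(v/L)$. The factor $1/\delta$ on the small side yields $\delta^{\nu+1}\le Cv^\nu$, $\delta^2\le Cv\log(e/v)$ and $\delta^2\le Cv$ in the three cases. These bounds force $\delta<\delta_*$ eventually, so $\delta=d(v)$ and $B-b(v)\le Bd(v)$.
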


\begin{proof}
By Lemma~\ref{lem:positiveboundary}, $d(v)=\log(B/b(v))$ is finite and
nonnegative. Choose $0<\delta_*\le\min\{1,4L_0\}$ and set
\[
 \delta=\min\{d(v),\delta_*\},\qquad L=\frac\delta4,
 \qquad x_c=x_0-\frac\delta2.
\]
If $\delta=0$ there is nothing to prove. If $L<8v$, then $\delta<32v$.
For the rest of the proof, assume $8v\le L$. The closed rectangle
\[
 \overline Q=\Bigl[x_0-\frac{3\delta}4,\,x_0-\frac\delta4\Bigr]\times[v,L]
\]
lies strictly inside the continuation region: for $y\ge v$,
$b(y)\le b(v)$, while $x_0-3\delta/4>x_0-d(v)$.

By \eqref{eq:Hequation} and \eqref{eq:barrier},
$\Lop(H-rK\beta)\le0$, and $H-rK\beta\ge0$ on $\partial Q$.
The minimum principle yields $H\ge rK\beta$. At $(x_c,2v)$,
\eqref{eq:Hupper} and Lemma~\ref{lem:exit} therefore give
\begin{equation}\label{eq:keygap}
 C\frac v\delta\ge c_*L\Phi_\nu\Bigl(\frac vL\Bigr).
\end{equation}
Since $L=\delta/4$, this implies, respectively,
\[
 \delta^{\nu+1}\le Cv^\nu\quad(\nu<1),\qquad
 \delta^2\le Cv\log\frac ev\quad(\nu=1),\qquad
 \delta^2\le Cv\quad(\nu>1).
\]
In the middle case we used $L\le1$. The case $\delta<32v$ satisfies the
same bounds after enlarging the constants. These bounds tend to zero, so
eventually $\delta<\delta_*$ and consequently $\delta=d(v)$.
Finally, $B-b(v)=B(1-e^{-d(v)})\le Bd(v)$. This also proves continuity
at zero, without using it in the argument.
\end{proof}

We next give a local observation used at positive variance. It avoids
assuming smoothness of a general free boundary.

\begin{lemma}\label{lem:flat}
Let $P=A_{ij}\partial_{ij}+B_i\partial_i-c_P$ be a smooth uniformly
elliptic operator in a rectangle with coordinates $(\xi,\eta)$, and let
$F$ be smooth. Suppose $u$ is continuous and nonnegative,
\[
 u=0\quad\hbox{for }\xi\le0,\qquad Pu=F\quad\hbox{for }\xi>0,
\]
and $Pu\le F$ across the rectangle, in the sense of smooth test functions
touching from below. Then, locally on the interface,
\begin{equation}\label{eq:flatjet}
 u_\xi(0+,\eta)=0,\qquad
 u_{\xi\xi}(0+,\eta)=\frac{F(0,\eta)}{A_{\xi\xi}(0,\eta)}.
\end{equation}
All first derivatives and the tangential and mixed second derivatives
vanish there.
\end{lemma}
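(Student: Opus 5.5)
The plan is first to get regularity of $u$ up to the interface from the side $\xi>0$, and then to read off the jet. In the half-rectangle $\{\xi>0\}$, $u$ is a classical solution of the uniformly elliptic equation $Pu=F$ with smooth coefficients and source. Continuity gives the boundary values $u(0,\eta)=0$ on the flat portion $\{\xi=0\}$. Local boundary Schauder estimates for the Dirichlet problem on a flat boundary with zero (hence smooth) data show that $u$ is $C^\infty$ up to $\{\xi=0\}$ from the right, locally on the interface, after shrinking the rectangle. So the one-sided derivatives in \eqref{eq:flatjet} exist and are continuous in $\eta$. Since $u(0+,\eta)=0$ identically, differentiating in $\eta$ gives $u_\eta(0+,\eta)=u_{\eta\eta}(0+,\eta)=0$.

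The main step is $u_\xi(0+,\eta)=0$; this is where the inequality $Pu\le F$ across the interface is used. Nonnegativity of $u$ together with $u(0,\eta)=0$ gives $p:=u_\xi(0+,\eta_0)\ge0$. Suppose $p>0$. Then $u$ has a convex corner along the interface at $(0,\eta_0)$, and I would exploit it with the test function
\[
 \phi(\xi,\eta)=t\xi+\frac N2\xi^2-(\eta-\eta_0)^2-\xi^2\cdot0,
 \qquad 0<t<p,
\]
with $N$ large, possibly adding a small multiple of $(\eta-\eta_0)^2$ to make the touching strict.

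Near $(0,\eta_0)$ we have $\phi\le u$. For $\xi<0$ and $|\xi|<2t/N$, $\phi\le0=u$. For small $\xi>0$, the Taylor expansion $u=p\xi+O(\xi^2+|\xi||\eta-\eta_0|)$ gives $\phi\le u$ once the neighbourhood is small relative to $N$, since $t<p$. In both cases we also have $\phi(0,\eta_0)=u(0,\eta_0)=0$.

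At the contact point,
\[
 P\phi(0,\eta_0)=NA_{\xi\xi}-2A_{\eta\eta}+B_\xi t+O(1).
\]
This exceeds $F(0,\eta_0)$ for $N$ large, by uniform ellipticity. That contradicts the hypothesis $Pu\le F$ in the test-function sense. Hence $u_\xi(0+,\eta)=0$ for every $\eta$.

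The only delicacy I expect here is bookkeeping. The neighbourhood must be chosen after $N$, and the cross term in the Taylor expansion must be absorbed, for instance by shrinking the $\eta$-window or by strengthening the $-(\eta-\eta_0)^2$ term.

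Finally, differentiating $u_\xi(0+,\eta)\equiv0$ in $\eta$ gives $u_{\xi\eta}(0+,\eta)=0$. At this point all first derivatives and the tangential and mixed second derivatives vanish on the interface. Letting $\xi\searrow0$ in $Pu=F$, using the smoothness up to the boundary, every term except $A_{\xi\xi}u_{\xi\xi}$ drops out; in particular $c_Pu=0$ there. This yields
\[
 A_{\xi\xi}(0,\eta)\,u_{\xi\xi}(0+,\eta)=F(0,\eta),
\]
and $A_{\xi\xi}>0$ by uniform ellipticity, which proves \eqref{eq:flatjet}.
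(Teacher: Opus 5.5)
Your proposal is correct and follows essentially the same route as the paper: smoothness up to $\{\xi=0\}$ from the positive side by boundary regularity for the zero Dirichlet problem, a contradiction at a point with $u_\xi(0+,\eta_0)>0$ from the lower test function $t\xi+\frac N2\xi^2$ (the paper uses $c_0\xi+M\xi^2$) with $N$ large, and then tangential differentiation and evaluation of $Pu=F$ from the positive side. The extra $-(\eta-\eta_0)^2$ term in your test function, and the stray $-\xi^2\cdot0$, are unnecessary but harmless, since they only add the fixed quantity $-2A_{\eta\eta}$ to $P\phi(0,\eta_0)$.
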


\begin{proof}
On the side $\xi>0$, local boundary regularity for a smooth uniformly
elliptic Dirichlet problem gives smoothness up to the flat boundary,
whose boundary data are zero, by \cite[Chapter 6]{GT}. This is a local
statement away from the other sides of the rectangle.
Let $p(\eta)=u_\xi(0+,\eta)$. Nonnegativity gives $p\ge0$.
If $p(\eta_0)>0$, choose $c_0>0$ and an interval around $\eta_0$ on which
$p\ge2c_0$. For any fixed $M>0$, shrinking the rectangle if necessary
makes
\[
 \phi(\xi,\eta)=c_0\xi+M\xi^2
\]
a lower test function at $(0,\eta_0)$. For small negative $\xi$ it is
nonpositive. For positive $\xi$, the boundary Taylor expansion gives
$u\ge(3c_0/2)\xi\ge\phi$ after a further shrinking. At the contact point,
\[
 P\phi=2M A_{\xi\xi}+c_0 B_\xi.
\]
Taking $M$ large contradicts $P\phi\le F$. Thus $p=0$ identically along
the interface. Tangential differentiation of $u=0$ and $u_\xi=0$ gives
the asserted tangential and mixed derivatives. Evaluating $Pu=F$ from
the positive side now gives \eqref{eq:flatjet}.
\end{proof}

\begin{proposition}\label{prop:strict}
The function $b$ is strictly decreasing on $[0,\infty)$ and continuous
on $(0,\infty)$.
\end{proposition}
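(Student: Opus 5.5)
The plan is to prove strict monotonicity first and then continuity, using Lemma~\ref{lem:flat} as the key local tool at positive variance.

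\textbf{Strict monotonicity.} We already know that $b$ is nonincreasing and that $0<b<K$ (Lemma~\ref{lem:positiveboundary}). Suppose $b(v_1)=b(v_2)=:s_*$ for some $0\le v_1<v_2$. Monotonicity makes $b$ constant on $[v_1,v_2]$. Pick a nondegenerate compact interval $J\Subset(v_1,v_2)$ with $J\subset(0,\infty)$, and set $\xi_*=\log s_*<\log K$. On the rectangle $(\xi_*-\epsilon,\xi_*+\epsilon)\times J$ the contact set is exactly $\{x\le\xi_*\}$. This holds because $R(\cdot,y)$ vanishes on the initial interval $(0,b(y)]$ and is positive beyond it, as $b(y)$ is a supremum and $R$ is nondecreasing.

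In this rectangle we apply Lemma~\ref{lem:flat} with $\xi=x-\xi_*$, $\eta=y$, $P=\Lop$ (uniformly elliptic since $y$ is bounded below on $J$), $F=f$ and $u=w$. The viscosity inequality is supplied by Lemma~\ref{lem:pde}. The lemma gives $w=w_x=w_{xy}=w_{yy}=0$ and $w_{xx}=2f(\xi_*)/y>0$ on the interface. This is consistent, so no contradiction arises yet. Instead we use $H=w_x-w$, which satisfies $\Lop H=-rK$ in the continuation region. On the flat interface $H=0$ and $H\ge0$ inside, so $H_x(0+,y)=w_{xx}-w_x=2f/y$. Differentiating $\Lop w=f$ once more in $y$ along the flat boundary gives a relation for higher derivatives.

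Since this route may become messy, the approach I would try first is a Hopf-type contradiction on $H$. The function $H$ is nonnegative and solves $\Lop H=-rK<0$, so it is a strict supersolution vanishing on the flat boundary. Consider instead $G=w_y$, which in the continuation region satisfies a differentiated equation $\Lop G+(\text{first-order terms in } w_x,w_y)=0$, and $G\ge0$ by variance monotonicity. On the flat interface $G=0$, $G_x=w_{xy}=0$ and $G_{xx}=w_{xxy}=\partial_y(2f/y)=-2f/y^2<0$. Hence $G<0$ just inside the continuation region near the interface, contradicting $w_y\ge0$ (Lemma~\ref{lem:basic}). The required expansion $w_{xxy}(0+,y)=\partial_y w_{xx}(0+,y)$ is legitimate because $w$ is smooth up to the flat interface by the boundary regularity invoked in Lemma~\ref{lem:flat}. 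This argument directly gives strict decrease on $(0,\infty)$, and on $[0,\infty)$ as well, since constancy on $[0,v_2]$ contains a positive-variance subinterval.

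\textbf{Continuity on $(0,\infty)$.} We argue that $b$ has no jumps. Suppose $b(v_0-)>b(v_0+)$ for some $v_0>0$; one-sided limits exist by monotonicity. Then for $s$ strictly between the two limits, with $\xi=\log s$, the points $(\xi,y)$ are in the contact set for $y<v_0$ and in the continuation set for $y>v_0$. The horizontal segment $\{y=v_0\}\times(\log b(v_0+),\log b(v_0-))$ is then locally a flat free boundary with normal in the $y$-direction. Applying Lemma~\ref{lem:flat} with $\xi=y-v_0$ yields $w_{yy}(\cdot,v_0+)=f/(Dv_0)>0$ together with vanishing tangential derivatives. On the other hand, $w(\cdot,v_0+\epsilon)$ is positive and convex in $s$ and vanishes below $b(v_0+\epsilon)$. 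Because the segment is flat, $w=O(\epsilon^2)$ uniformly there, and the contact-set equation $0\le f$ is consistent. To get a contradiction I would use that $w(x,y)=0$ for $x$ near $\log b(v_0+)$ and $y>v_0$ small. This is a corner where the vertical and horizontal interfaces meet, and the second-order jet $w_{yy}=f/(Dv_0)$ along the horizontal piece must be compatible with $w\equiv0$ to the left. Making that corner argument rigorous is the main obstacle I expect.

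A cleaner alternative, which I would try first, is a lower barrier: near a point of the segment, $w\le C(y-v_0)^2$, while the viscosity supersolution property at a contact point $(\xi,v_0)$ below the segment is fine. Right-continuity, however, can be obtained directly from closedness of the contact set, since $V$ is continuous. Left-continuity failure means a contact point at $y<v_0$ has continuation directly above at the same $x$. That contradicts variance monotonicity only in the opposite direction, so left-continuity does follow from closedness: the contact set $\{w=0\}$ is closed, so $b$ is upper semicontinuous, and a nonincreasing function that is upper semicontinuous is left-continuous. The genuine case is therefore a jump at $v_0$ from the right, handled by the flat-segment argument above via Lemma~\ref{lem:flat}, using $G=w_y$ and the Hopf lemma for the differentiated equation on the continuation side to contradict $G_y$-behaviour at the segment.
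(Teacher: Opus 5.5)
Your strict-monotonicity argument is correct and essentially the paper's. The paper reads off $w(x_*+h,y)=\frac{f(x_*)}{y}h^2+o(h^2)$ from Lemma~\ref{lem:flat} and compares two heights $y_1<y_2$, which contradicts monotonicity in $v$. You differentiate the same jet in $y$ and get $w_y(x_*+h,y)=-\frac{f(x_*)}{y^2}h^2+O(h^3)<0$. So the jet you called ``consistent'' already gives the contradiction. Your description of the differentiated equation for $w_y$ is inaccurate, since it also contains $w_{xx},w_{xy},w_{yy}$, but you never use it. Left-continuity from closedness of the contact set is also correct after your self-correction: closedness gives upper semicontinuity, hence left- (not right-) continuity of a nonincreasing $b$.

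The gap is the right jump, which you correctly single out as the genuine case but do not close. The mechanism you propose cannot work. At the horizontal flat interface $\{y=v_0\}$, Lemma~\ref{lem:flat} gives $w(x,v_0+h)=\frac{f(x)}{2Dv_0}h^2+o(h^2)$. Hence $w_y\approx\frac{f(x)}{Dv_0}h>0$ just above the segment, which is exactly the sign allowed by variance monotonicity. A Hopf argument on $w_y$ therefore yields no contradiction, and no corner analysis is needed either. The missing idea is a sign constraint in the stock direction. One-sided smoothness up to the flat interface lets you differentiate the expansion tangentially, locally uniformly in $x$. Then $H=w_x-w$, which is nonnegative by convexity \eqref{eq:convexexcess}, satisfies
\[
 H(x,v_0+h)=\frac{f'(x)-f(x)}{2Dv_0}h^2+o(h^2)=-\frac{rK}{2Dv_0}h^2+o(h^2)<0
\]
for small $h>0$, a contradiction. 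Using only $w_x\ge0$ would not suffice. It gives $w_x\approx-\frac{qe^x}{2Dv_0}h^2$, which contradicts nothing when $q=0$. What excludes right jumps in general is the convexity inequality $sR_s\ge R$ together with $rK>0$.
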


\begin{proof}
Suppose first that $b(v_1)=b(v_2)=s_*$ for $0\le v_1<v_2$.
Monotonicity makes $b$ constant on this interval. Choose a smaller
positive-variance interval and write $x_*=\log s_*$. Locally, $w=0$
for $x\le x_*$, while $\Lop w=f$ for $x>x_*$. Lemmas~\ref{lem:pde}
and \ref{lem:flat} give
\begin{equation}\label{eq:verticaljet}
 w(x_*+h,y)=\frac{f(x_*)}{y}h^2+o(h^2)
 \qquad(h\searrow0).
\end{equation}
Here $f(x_*)>0$ by \eqref{eq:parameters} and $s_*<K$. For two fixed
heights $y_1<y_2$ in the interval, \eqref{eq:verticaljet} gives
$w(x_*+h,y_1)>w(x_*+h,y_2)$ for small $h>0$. This contradicts variance
monotonicity and proves strict decrease.

Left continuity at $v_*>0$ follows from closedness of the contact set.
More explicitly, if $v_n\nearrow v_*$, then every
$s<\lim_n b(v_n)$ is a contact point for each $v_n$, and continuity of
$V$ makes it a contact point at $v_*$. Monotonicity gives the reverse
inequality for the boundary limit.

If a right jump occurs, choose a log-stock interval strictly between
$\log b(v_*+)$ and $\log b(v_*)$. On this interval the region below
$y=v_*$ is contact and the region above it is continuation. Applying
Lemma~\ref{lem:flat} in the variance direction gives
\[
 w(x,v_*+h)=\frac{f(x)}{2Dv_*}h^2+o(h^2).
\]
The one-sided smoothness just used also permits tangential
differentiation of this expansion, locally uniformly in $x$. Hence
\[
 H(x,v_*+h)=\frac{f'(x)-f(x)}{2Dv_*}h^2+o(h^2)
          =-\frac{rK}{2Dv_*}h^2+o(h^2)<0
\]
for small $h>0$, contradicting \eqref{eq:Hequation}. There is no right
jump.
\end{proof}

The boundary gap is at least of the order of the variance.

\begin{proposition}\label{prop:lower}
One has
\begin{equation}\label{eq:liminflin}
 \liminf_{v\searrow0}\frac{d(v)}v>0.
\end{equation}
In particular, $B-b(v)\ge c_*v$ for small $v>0$.
\end{proposition}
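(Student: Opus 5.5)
We argue by contradiction through the isotropic blowup \eqref{eq:blowup}. Suppose $v_n\searrow0$ with $d(v_n)/v_n\to0$, and set $\varepsilon_n=v_n$, $W_n=W_{\varepsilon_n}$. The first step is to collect uniform properties of the $W_n$ that pass to a limit:
\begin{itemize}
\item By \eqref{eq:globalcontactbound}, $0\le W_n(X,Y)\le CY+(e^{\varepsilon_nX}-1)^+B/\varepsilon_n\le C(Y+X^+)$ locally uniformly.
\item By Lemma~\ref{lem:basic}, $W_n$ is nondecreasing and Lipschitz in $X$, uniformly in $n$, and nondecreasing in $Y$.
\item Since $b$ is nonincreasing, the contact set of $W_n$ contains the quadrant $\{X\le -d(v_n)/v_n,\ 0<Y\le1\}$. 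By the contradiction hypothesis this threshold tends to $0$.
\item By Lemma~\ref{lem:pde}, and because the coefficients of $\Lop$ converge after rescaling to those of $\mathcal L_0$ with source $f(x_0+\varepsilon X)\to f_0$, the function $W_n$ solves the rescaled variational inequality. This equation is uniformly elliptic on compact subsets of $\{Y>0\}$.
\end{itemize}
By Lemma~\ref{lem:H}, $H\ge0$ also gives $\partial_XW_n\ge\varepsilon_nW_n$.

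Next I extract a subsequence converging locally uniformly on $\{Y>0\}$ to some $W_0$. On compacts in $\{Y>0\}$ I have uniform $W^{2,p}$ bounds from \cite{Feehan}, because ellipticity is uniform there. The limit $W_0$ is nonnegative, nondecreasing in $X$ and $Y$, and satisfies $\min\{f_0-\mathcal L_0W_0,W_0\}=0$ in the viscosity sense. It vanishes on $\{X\le0,\ 0<Y\le1\}$, and it is $\le C(Y+X^+)$. Stability of viscosity solutions and of obstacle problems under uniform convergence justifies all of this.

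The contradiction should come from nondegeneracy at the corner point $(0,1)$, which lies at positive height where $\mathcal L_0$ is uniformly elliptic. Since $W_0$ vanishes on the left half of a neighbourhood and $f_0>0$, obstacle nondegeneracy gives $\sup_{B_r(0,1)}W_0\ge c r^2$, so $(0,1)$ is a free boundary point. I would then look at the segment $\{0\}\times(0,1)$. Along it $W_0$ vanishes to the left, so the free boundary of $W_0$ below height $1$ lies in $\{X\ge0\}$. Monotonicity in $Y$ forces the contact set to be a subgraph $\{X\le g(Y)\}$ with $g$ nonincreasing and $g\ge0$ on $(0,1]$. If $g(Y)=0$ on an interval, then Lemma~\ref{lem:flat} applied to $\mathcal L_0$ gives $W_0(h,Y)=f_0h^2/Y+o(h^2)$. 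This is strictly decreasing in $Y$, contradicting monotonicity in $Y$ exactly as in Proposition~\ref{prop:strict}. So $g$ must be strictly decreasing on the relevant range. I would also exploit that $g\ge0$ on $(0,1]$ together with $g(1)=0$ at the point where the rescaled boundary passes.

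The main obstacle is the case where $g$ is strictly positive and decreasing to $0$ at $Y=1$, so that the simple flat-interface contradiction does not apply directly. To close this case I would additionally blow up at $(0,1)$ itself. That is a uniformly elliptic point for a constant-coefficient obstacle problem. There the contact set contains a quarter-plane (left, below), and is monotone in both directions. The classification of global solutions with a contact set containing a cone (Caffarelli's alternative, \cite{PSU}) then forces a half-plane solution. I would show that the half-plane must be vertical. Monotonicity in $Y$ then contradicts the computation in Proposition~\ref{prop:strict} once more. If this classification step proves too delicate, the fallback is a direct barrier. I would compare $W_0$ with the quadratic $f_0(X^+)^2/(2\cdot\tfrac Y2)$-type subsolution in a box hugging $\{X=0\}$ and derive a contradiction with $W_0(X,Y)\le W_0(X,1)$ for $Y<1$. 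Finally, $d(v)\ge c_*v$ gives $B-b(v)=B(1-e^{-d(v)})\ge c_*'v$ for small $v$, by continuity at zero from Proposition~\ref{prop:upper}.
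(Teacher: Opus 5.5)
There is a genuine gap, and it sits exactly where you place your ``main obstacle''. You treat the contact set of the limit $W_0$ in the strip $0<Y<1$ as an unknown subgraph $\{X\le g(Y)\}$ with $g\ge0$. You never use the fact that, for every $n$, the whole region $\{X>0,\ Y>0\}$ lies in the continuation region, at least on compacts where $x_0+\varepsilon_nX<\log K$. The reason is that for $y>0$ one has $b(y)\le B=e^{x_0}$, and the zero set of $R(\cdot,y)$ is the initial interval $(0,b(y)]$, so every $s\in(B,K)$ is a continuation point. Hence $\mathcal L_nW_n=f_n$ holds classically on $\{X>0,Y>0\}$, and interior estimates give $\mathcal L_0W_0=f_0$ there. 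By monotonicity in both variables and $f_0>0$, as in Proposition~\ref{prop:compact}, $W_0>0$ there as well. Thus $g\equiv0$ on $(0,1)$: the interface of the limit is exactly the flat segment $\{X=0,\ 0<Y<1\}$, and the case $g>0$ never occurs. Your own flat-interface argument then closes the proof. Lemma~\ref{lem:flat} at two heights $0<Y_1<Y_2<1$ gives $W_0(h,Y_i)=f_0h^2/Y_i+o(h^2)$, which contradicts $W_0(h,Y_1)\le W_0(h,Y_2)$. This is the paper's argument. Note that Lemma~\ref{lem:flat} needs only $W_0=0$ for $X\le0$, the equation for $X>0$, and the supersolution inequality across; positivity is not required. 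The nondegeneracy at $(0,1)$ plays no role.

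As written, your resolution of the remaining case would not work. A blowup of $W_0$ at the single point $(0,1)$ freezes the coefficient $Y/2$ of $\partial_{XX}$. It therefore loses exactly the $1/Y$ dependence of the second normal derivative that drives the contradiction. A half-plane solution $\tfrac\lambda2\bigl((X,Y)\cdot e\bigr)_+^2$ with $e=(e_1,e_2)$, $e_1,e_2\ge0$, is nondecreasing in $Y$, so it is fully compatible with monotonicity. When $e=(1,0)$ it is even constant in $Y$, so showing that ``the half-plane must be vertical'', for which you give no reason anyway, would still produce no contradiction. The fallback barrier is not specified enough to be checked.
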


\begin{proof}
Suppose instead that $\varepsilon_n\searrow0$ and
$d(\varepsilon_n)/\varepsilon_n\to0$. Define $W_n$ by
\eqref{eq:blowup} with $\varepsilon=\varepsilon_n$, and restrict for now
to $0<Y<1$. Since $R(b(y),y)=0$, stock Lipschitz continuity and boundary
monotonicity give
\begin{equation}\label{eq:smalltrace}
 0\le W_n(0,Y)
 \le\frac{B-b(\varepsilon_nY)}{\varepsilon_n}
 \le\frac{B d(\varepsilon_n)}{\varepsilon_n}\longrightarrow0
\end{equation}
uniformly for $0<Y\le1$. By stock monotonicity the same upper bound
holds for $X\le0$. On each bounded $X$ interval the $W_n$ have a common
Lipschitz constant in $X$, because
\[
 |W_n(X_2,Y)-W_n(X_1,Y)|
 \le\frac B{\varepsilon_n}|e^{\varepsilon_nX_2}-e^{\varepsilon_nX_1}|.
\]

For $X>0,Y>0$, the original point is in the continuation region for all
large $n$ on any fixed compact set. Direct rescaling gives
\begin{align}
 \mathcal L_nW_n={}&rK-qB e^{\varepsilon_nX},\label{eq:scaledpde}\\
 \mathcal L_n={}&\frac Y2\partial_{XX}+\rho\sigma Y\partial_{XY}
 +DY\partial_{YY}+\Bigl(r-q-\frac{\varepsilon_nY}2\Bigr)\partial_X\notag\\
 &+(a-\kappa\varepsilon_nY)\partial_Y-\varepsilon_nr.\notag
\end{align}
The bounds above and interior elliptic estimates give a smoothly
convergent subsequence on $\{X>0,0<Y<1\}$. Extend its limit $W$ by zero
for $X\le0$. The common stock Lipschitz bound and \eqref{eq:smalltrace}
give local uniform convergence on the whole strip.

Viscosity stability in Lemma~\ref{lem:pde} yields
$\mathcal L_0W\le f_0$ throughout the strip, and
$\mathcal L_0W=f_0$ for $X>0$. Also $W$ is nondecreasing in $Y$.
Lemma~\ref{lem:flat}, now applied at $X=0$ and away from $Y=0$, gives
\[
 W(X,Y)=\frac{f_0}{Y}X^2+o(X^2)\qquad(X\searrow0).
\]
For fixed $0<Y_1<Y_2<1$, the leading coefficients contradict
$W(X,Y_1)\le W(X,Y_2)$. This proves \eqref{eq:liminflin}.
Finally, Proposition~\ref{prop:upper} gives $d(v)\to0$, and
$B-b(v)=B(1-e^{-d(v)})\ge Bd(v)/2$ for small $v$.
\end{proof}

\begin{proof}[Derivation of the preliminary bound \eqref{eq:mainbounds}]
Combine Lemma~\ref{lem:positiveboundary} and
Propositions~\ref{prop:upper}, \ref{prop:strict}, and \ref{prop:lower}.
Continuity and strict monotonicity give the continuous inverse of $d$ on
a small interval. The inequality $d(v)\ge c_*v$ implies
$\gamma(\delta)\le C\delta$ there.
\end{proof}

At positive variance the boundary is real analytic and strictly
decreasing.

\begin{proposition}\label{prop:positiveregularity}
The boundary $b$ is real analytic on $(0,\infty)$, and $b'(v)<0$
there. Its local inverse $\gamma$ is real analytic at every positive
argument in its domain.
\end{proposition}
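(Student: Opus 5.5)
We treat the exercise boundary at positive variance as the free boundary of a uniformly elliptic obstacle problem with analytic data. We then apply the Kinderlehrer--Nirenberg analyticity theorem once the boundary is known to be $C^1$. The plan therefore has three steps: obtain a Lipschitz (indeed $C^1$) graph locally, upgrade to analyticity by the hodograph--Legendre transform, and prove $b'<0$ by a Hopf-type argument.

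\textbf{Step 1 (local $C^1$).} Fix $v_*>0$ and work in a small rectangle $U$ around $(\log b(v_*),v_*)$ with $y$ bounded away from zero. There $\Lop$ is uniformly elliptic with analytic coefficients, and $f(x)=rK-qe^x$ is analytic and positive near $x=\log b(v_*)<\log K$. By Lemma~\ref{lem:pde} and \cite{Feehan}, $w\in W^{2,p}_{\loc}$ solves the obstacle problem $\Lop w=f\chi_{\{w>0\}}$ with zero obstacle. By Proposition~\ref{prop:strict} the contact set in $U$ is exactly $\{x\le\log b(y)\}$. This is the subgraph of a continuous, strictly decreasing function of $y$, so it is a Lipschitz-type cone region. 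Moreover, $w_x\ge w\ge0$ by \eqref{eq:convexexcess}, and $w_y\ge0$ by variance monotonicity. Hence $w$ is monotone in a cone of directions $e=(\cos\alpha,\sin\alpha)$ with $\alpha\in[0,\pi/2]$. The hard step is to make this monotonicity strict and robust. First, $H=w_x-w$ satisfies $\Lop H=-rK$ and $H\ge0$. By the strong maximum principle, $H>0$ in the continuation region, so $w_x>0$ there. Second, $w_y$ satisfies a linear equation in the continuation set, obtained by differentiating $\Lop w=f$; the terms produced by differentiating the $y$-dependent coefficients are lower order in $w_x,w_{xx},w_{xy}$. With $w_y\ge0$ I would like the strong maximum principle to give $w_y>0$, but the inhomogeneous terms make it awkward. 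Instead I would use directional monotonicity $\partial_e w-\epsilon w\ge0$ for directions in a smaller cone. This follows from $w_x\ge w$ and $w_y\ge0$ by taking combinations $\partial_e w=\cos\alpha\, w_x+\sin\alpha\, w_y\ge\cos\alpha\, w$. Then the standard Caffarelli argument for obstacle problems, in which monotonicity in a cone of directions implies a Lipschitz free boundary, gives that $\partial\{w>0\}\cap U$ is a Lipschitz graph $x=\log b(y)$. The free boundary then consists of regular points: the Lipschitz geometry excludes thin (singular) points, since the contact set has positive density. Caffarelli's regularity theory for the obstacle problem with variable coefficients then gives $C^{1,\alpha}$.

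\textbf{Step 2 (analyticity).} With a $C^{1,\alpha}$ free boundary, uniformly elliptic analytic operator, and analytic positive right side $f$, the Kinderlehrer--Nirenberg theorem (partial hodograph transform) gives real analyticity of the free boundary. Hence $y\mapsto\log b(y)$ and $b$ are analytic on $(0,\infty)$.

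\textbf{Step 3 ($b'<0$ and $\gamma$).} Strict decrease from Proposition~\ref{prop:strict} gives $b'\le0$. Suppose $b'(v_*)=0$. Then the boundary is tangent to the vertical direction $\{x=\text{const}\}$ at $(x_*,v_*)$. Flatten the boundary by an analytic change of variables $\xi=x-\log b(y)$. Lemma~\ref{lem:flat} in these coordinates gives the expansion of $w$ near the interface. Its leading coefficient $\xi^2 f/(2A_{\xi\xi})$ combines with $\partial_y=\partial_y|_\xi-(\log b)'\partial_\xi$. We then get $w_y=-(\log b)'(y)\,\xi f/A_{\xi\xi}+O(\xi^2)$. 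Using $w_y\ge0$ and $(\log b)'\le0$ gives no contradiction directly. Instead I would use $w_y\ge0$ together with the fact that $w_y$ vanishes on the free boundary and satisfies an elliptic equation with nonnegative right side; that right side comes from differentiating $f\chi$, giving the boundary measure $f\,|(\log b)'|\,\delta_{\partial}$. The Hopf lemma at $(x_*,v_*)$ then forces the normal derivative of $w_y$ to be positive. But the expansion shows this normal derivative equals $-(\log b)'(v_*)f/A_{\xi\xi}=0$, a contradiction provided $w_y\not\equiv0$. We know $w_y\not\equiv0$ because otherwise $b$ would be locally constant, contradicting strict decrease. This establishes $b'<0$. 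Finally, $d=\log(B/b)$ is analytic with $d'=-b'/b>0$, so the analytic inverse function theorem makes $\gamma$ analytic at every positive argument.

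The main obstacle is Step 1: converting the one-sided monotonicity of $w$ into a Lipschitz, and then $C^{1,\alpha}$, free boundary under variable coefficients with a mixed term. I expect this to need careful use of the obstacle regularity results for that setting.
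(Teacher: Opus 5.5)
Your overall plan (a Lipschitz chart from monotonicity, then Caffarelli's theorem, then Kinderlehrer--Nirenberg, then a Hopf argument) is the paper's route. Two steps, however, fail as written.

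\textbf{Transversality to the horizontal is missing.} Nothing in your argument shows that the inward normal $n=(n_x,n_y)$ of the free boundary at $p=(\log b(v_*),v_*)$ satisfies $n_x>0$. Kinderlehrer--Nirenberg gives analyticity of the free-boundary \emph{curve}. To conclude that $b$ is analytic as a function of $y$, the curve must be transversal to the horizontal direction. An analytic decreasing curve such as $y-v_*=-(x-x_*)^3$ has $b'(v_*)=-\infty$.

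Your Step~1 claim that cone monotonicity makes the boundary a Lipschitz graph $x=\log b(y)$ does not follow. Your inequality $\partial_e w\ge\cos\alpha\,w$ needs $\alpha\in[0,\pi/2]$, so the directions of monotonicity fill only the first quadrant, whose axis is $(1,1)/\sqrt2$. Caffarelli's argument then gives a $1$-Lipschitz graph only in coordinates rotated by $45^\circ$, which is exactly the chart the paper uses. A Lipschitz graph over $y$ would need monotonicity in directions with negative $y$-component, and you do not have that. Step~3 then treats only $b'(v_*)=0$ and takes finiteness of $b'(v_*)$ for granted.

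The missing step is a Hopf lemma for $w_x$. In continuation $w_x\ge w>0$ and $\Lop w_x=f'(x)=-qe^x\le0$, while $w_x(p)=0$. The zero Cauchy data and the equation give $D^2w(p)=\lambda_p\,n\otimes n$ with $\lambda_p>0$, because $f(\log b(v_*))>0$. Hopf therefore yields $\lambda_p n_x=\partial_n w_x(p)>0$. The same argument works with $H=w_x-w$.

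\textbf{The sign for $w_y$ is misattributed.} Your justification for applying the strong minimum principle and Hopf to $w_y$ is wrong. Hopf must be applied inside the continuation region, where no boundary measure appears. In any case, a nonnegative right-hand side would make $w_y$ a subsolution, for which neither the minimum principle nor Hopf gives anything.

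The needed sign comes from stock convexity, which also disposes of the inhomogeneous terms you found awkward in Step~1. Differentiate $\Lop w=f$ in $y$ and absorb $\rho\sigma\partial_x w_y+D\partial_y w_y-\kappa w_y$ into the operator. This gives
\[
 (\Lop+\rho\sigma\partial_x+D\partial_y-\kappa)w_y=-\tfrac12(w_{xx}-w_x)\le0,
\]
since $w_{xx}-w_x=e^{2x}R_{ss}\ge0$. You already have $w_y\ge0$, and $w_y\not\equiv0$ by your strict-decrease remark. The strong minimum principle then gives $w_y>0$ in continuation, and Hopf gives $\lambda_p n_y=\partial_n w_y(p)>0$.

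Together with $n_x>0$, this yields $b'(v_*)=-b(v_*)n_y/n_x\in(-\infty,0)$ without the flattening computation. Your conclusion for $\gamma$ then follows by the analytic inverse function theorem.
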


\begin{proof}
Fix a free-boundary point $p=(\log b(v_*),v_*)$ with $v_*>0$.
Work in a ball about $p$ contained in $\{x<\log K,y>0\}$.
Dividing the equation by $y$ gives
\begin{equation}\label{eq:constantprincipal}
\begin{gathered}
 A_0:D^2w=F\chi_{\{w>0\}},\qquad
 A_0=\begin{pmatrix}\frac12&\frac{\rho\sigma}2\\[2pt]\frac{\rho\sigma}2&D\end{pmatrix},\\
 F=\frac{f-\Bigl(r-q-\frac y2\Bigr)w_x-(a-\kappa y)w_y+rw}{y}.
\end{gathered}
\end{equation}
The matrix $A_0$ is positive definite. Local $W^{2,p}$ estimates
give $w\in C^{1,\alpha}$, and $F\in C^\alpha$ for any fixed
$\alpha<1$. Since $w=\nabla w=0$ at $p$, $F(p)=f(p_x)/v_*>0$.
After shrinking the ball, $F$ has a positive lower bound.

For completeness, the required local $C^{1,1}$ estimate follows from
\eqref{eq:constantprincipal}. At a contact point $z$, the
inhomogeneous Harnack inequality, applied on a ball of radius $2h$,
gives $\sup_{B_h(z)}w\le Ch^2$, since $w\ge0$, $w(z)=0$,
and the right-hand side is bounded. At a continuation point a
distance $h$ from contact, the interior Schauder estimate on a ball
of radius $h/2$ then bounds $D^2w$, using the $C^\alpha$ bound for
$F$. The Hessian vanishes almost everywhere in the contact set.
Thus $w$ is locally $C^{1,1}$ across the boundary.

Monotonicity in both variables gives the following boundary
chart. In the rotated coordinates
\[
 \xi=\frac{(x-p_x)-(y-p_y)}{\sqrt2},\qquad
 \zeta=\frac{(x-p_x)+(y-p_y)}{\sqrt2},
\]
the contact set lies below a $1$-Lipschitz graph through the origin,
and the continuation set lies above it. Indeed, contact is preserved
by decreasing both original coordinates, and continuation by
increasing them. These two cones give the asserted Lipschitz bound.
Fixing the H\"older exponent at $1/2$, the positive $C^{1/2}$ source,
the $C^{1,1}$ bound, and the zero Cauchy data verify hypotheses
H1--H2 in \cite[Section 1.2]{Caffarelli}, after a fixed linear change
of variables normalizes $A_0$. Caffarelli's boundary regularity theorem
\cite[Theorem 3]{Caffarelli} therefore gives a $C^1$ free boundary
and continuity of the second derivatives of $w$ from continuation.
The analytic elliptic equation $\Lop w=f$ has zero Cauchy data on
this boundary and a nonzero second normal derivative.
Theorem~1$'$ of Kinderlehrer and Nirenberg~\cite{KN} gives local
analyticity of the boundary and smoothness of $w$ from continuation.

It remains to check that the boundary is a graph in each original
coordinate with a nonzero slope. Let $n=(n_x,n_y)$ be its unit normal
pointing into continuation. Monotonicity gives $n_x,n_y\ge0$.
The zero Cauchy data and the equation imply
\begin{equation}\label{eq:normalhessian}
 D^2w(p)=\lambda_p n\otimes n,\qquad
 \lambda_p=\frac{f(p_x)}{v_* n^T A_0 n}>0,
\end{equation}
where derivatives are taken from continuation.
There $w_x\ge w>0$ by \eqref{eq:convexexcess}, and
\begin{equation}\label{eq:derivativeequations}
 \Lop w_x=-qe^x\le0,\qquad
 (\Lop+\rho\sigma\partial_x+D\partial_y-\kappa)w_y
       =-\tfrac12(w_{xx}-w_x)\le0.
\end{equation}
The second inequality uses stock convexity. The continuation set
with $x<\log K$ is connected. Strict decrease of $b$ shows that
$w_y$ is not identically zero there: between two distinct boundary
stock prices, increasing the variance changes zero excess value
to positive excess value. The strong minimum principle gives
$w_y>0$ throughout this set. Applying the Hopf lemma to $w_x$ and
$w_y$ at $p$ now gives
$\lambda_p n_x>0$ and $\lambda_p n_y>0$.
Consequently $b'(v_*)=-b(v_*)n_y/n_x<0$.
The analytic inverse function theorem proves the assertion for
$\gamma$.
\end{proof}

We now turn to blowup limits. The isotropic rescaling controls the
value on regions with positive rescaled variance. It will also be used
to exclude boundary gaps of order $v$ and to study the tangent at
moving contact points. The limits along subsequences have the
following properties.

\begin{proposition}\label{prop:compact}
Every sequence $\varepsilon_n\searrow0$ has a subsequence along which
$W_{\varepsilon_n}$ converges in $C^{1,\alpha}_{\loc}(\HH)$, for any fixed
$0<\alpha<1$, to a function $W$. The limit is nonnegative, nondecreasing
in both variables, and convex in $X$, and satisfies
\begin{equation}\label{eq:limitbounds}
 0\le W(X,Y)\le CY+BX^+,\qquad 0\le W_X\le B.
\end{equation}
It solves
\begin{equation}\label{eq:limitvi}
 \min\{f_0-\mathcal L_0W,W\}=0\quad\hbox{in }\HH,
\end{equation}
where $\mathcal L_0$ is defined in \eqref{eq:Lzero}.
Moreover, $W>0$ on $\{X>0,Y>0\}$.
\end{proposition}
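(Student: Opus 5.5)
We argue by compactness from uniform bounds on the rescaled functions, then pass the viscosity inequality of Lemma~\ref{lem:pde} to the limit. Fix $\varepsilon$ small.

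\textbf{Uniform bounds.} By \eqref{eq:globalcontactbound}, for $\varepsilon Y<y_0$,
\[
 0\le W_\varepsilon(X,Y)\le CY+\frac{(Be^{\varepsilon X}-B)^+}{\varepsilon}.
\]
The last term converges locally uniformly to $BX^+$.

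By Lemma~\ref{lem:basic}, $w_x=e^xR_s\in[0,e^x]$, so $0\le\partial_XW_\varepsilon\le Be^{\varepsilon X}$. Monotonicity in both variables is inherited directly. Convexity in $X$ also passes: $w$ is convex in $x$, because $R$ is convex and nondecreasing in $s$ and $e^x$ is convex. All of these properties survive under local uniform convergence. The limit bound \eqref{eq:limitbounds} follows, with $W_X\le B$ obtained as the limit of $Be^{\varepsilon X}$.

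\textbf{Equation and regularity.} $W_\varepsilon$ satisfies $\min\{f_\varepsilon-\mathcal L_\varepsilon W_\varepsilon,W_\varepsilon\}=0$ in the viscosity sense, where $\mathcal L_\varepsilon$ is the operator in \eqref{eq:scaledpde} and $f_\varepsilon(X)=rK-qBe^{\varepsilon X}$. On $\{X>0\}$, where $x>x_0$, this is the full equation in the continuation region. On the contact side the obstacle $0$ is smooth. The restriction $x<\log K$ becomes $X<(\log K-x_0)/\varepsilon\to\infty$, so it is harmless on compacts. Beyond $\log K$, the point $w>0$ lies in the continuation region anyway, since $V>0=(K-s)^+$ there.

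On any compact $\Omega\Subset\HH$, $Y$ is bounded above and below, so the operators $\mathcal L_\varepsilon$ are uniformly elliptic with uniformly bounded smooth coefficients. The obstacle $W^{2,p}$ estimates quoted from \cite{Feehan} then bound $\|W_\varepsilon\|_{W^{2,p}(\Omega')}$ uniformly, in terms of the sup bound above and the bounded source. Morrey's embedding with $p$ large gives uniform $C^{1,\alpha'}$ bounds for some $\alpha'>\alpha$. Arzel\`a--Ascoli and a diagonal argument over an exhaustion of $\HH$ then yield a subsequence converging in $C^{1,\alpha}_{\loc}(\HH)$. Since $\mathcal L_\varepsilon\to\mathcal L_0$ and $f_\varepsilon\to f_0$ locally uniformly, stability of viscosity solutions gives \eqref{eq:limitvi}. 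The subsolution inequality $\mathcal L_0W\le f_0$ passes by the usual test-function perturbation, and $W\ge0$ is clear. On $\{W>0\}$ the uniform $W^{2,p}$ bounds and the equation $\mathcal L_\varepsilon W_\varepsilon=f_\varepsilon$ there give $\mathcal L_0W=f_0$.

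\textbf{Positivity on $\{X>0\}$: the main obstacle.} The difficulty is that the contact set of $W_\varepsilon$ could in principle creep toward $\{X>0\}$ in the limit. Suppose $W(X_1,Y_1)=0$ with $X_1,Y_1>0$. Monotonicity in both variables makes $W=0$ on $(-\infty,X_1]\times(0,Y_1]$. Any point $(X,Y)$ with $0<X<X_1$ and $0<Y<Y_1$ is interior to this zero set, so $W$ is smooth there with $\mathcal L_0W=0$. But $W=0$ on $\{W=0\}$, so there $f_0-\mathcal L_0W=f_0>0$, and the viscosity supersolution part is satisfied. This yields no contradiction by itself.

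Use instead that for $X>0$ every original point lies in the continuation region, because $x>x_0=\log B\ge\log b(\varepsilon Y)$. Hence $\mathcal L_\varepsilon W_\varepsilon=f_\varepsilon$ holds classically on $\{X>0\}$, and by the uniform $W^{2,p}$ bounds the limit satisfies $\mathcal L_0W=f_0$ a.e.\ in $\{X>0,Y>0\}$. On an open set where $W\equiv0$ this reads $0=f_0>0$, a contradiction. Therefore $W>0$ on $\{X>0,Y>0\}$.
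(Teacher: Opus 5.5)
Your proposal is correct and follows essentially the same route as the paper: the scaled growth bound from \eqref{eq:globalcontactbound} and the stock Lipschitz bound, then uniform local obstacle $W^{2,p}$ estimates on compacts of $\HH$ with compact embedding and a diagonal subsequence, then viscosity stability for \eqref{eq:limitvi}, and finally positivity from $\mathcal L_0W=f_0$ on $\{X>0,Y>0\}$ (every original point there being a continuation point) combined with monotonicity, which would force $W\equiv0$ on a rectangle and contradict $f_0>0$. The differences are cosmetic: you get convexity in $X$ by composing the convex nondecreasing $R(\cdot,y)$ with $e^x$ rather than from the distributional inequality $W_{\varepsilon,XX}-\varepsilon W_{\varepsilon,X}\ge0$, and the first, abandoned attempt in your positivity paragraph can simply be deleted.
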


\begin{proof}
Proposition~\ref{prop:contactbound} gives
\begin{equation}\label{eq:scaledbound}
 0\le W_\varepsilon(X,Y)
 \le CY+\frac{B(e^{\varepsilon X}-1)^+}{\varepsilon}
\end{equation}
on every fixed compact set for sufficiently small $\varepsilon$.
For such $\varepsilon$, the original stock is less than $K$ on that
compact set, so $W_\varepsilon$ solves
\[
 \min\{f_\varepsilon-\mathcal L_\varepsilon W_\varepsilon,
                 W_\varepsilon\}=0,
 \qquad f_\varepsilon=rK-qB e^{\varepsilon X},
\]
where $\mathcal L_\varepsilon$ is given in \eqref{eq:scaledpde}.
On any $E\Subset\HH$, the coefficients on a slightly larger domain are
uniformly smooth, the ellipticity constant is bounded below independently
of $\varepsilon$, and $f_\varepsilon$ is uniformly bounded.

The local obstacle estimate recalled in Section~\ref{sec:prelim} gives
$\|W_\varepsilon\|_{W^{2,p}(E)}\le C_{E,p}$ for every finite $p$.
The zero-order coefficient is $-\varepsilon r$. The local estimate uses
its nonpositive sign and its upper bound in magnitude, not a positive
lower bound on $\varepsilon r$.
Compact embedding, with $p$ chosen so that $\alpha<1-2/p$, followed by
a diagonal subsequence, gives convergence in $C^{1,\alpha}_{\loc}(\HH)$.

Monotonicity in $X$ and $Y$ passes to the limit. Stock convexity gives
\[
 W_{\varepsilon,XX}-\varepsilon W_{\varepsilon,X}\ge0
\]
in distributions, and hence $W_{XX}\ge0$ in distributions. Stock
Lipschitz continuity also gives
$0\le W_{\varepsilon,X}\le B e^{\varepsilon X}$, which yields the
derivative bound in \eqref{eq:limitbounds}.
The growth bound and the viscosity equation pass to the limit as well.

On $X>0,Y>0$, all original points are continuation points for small
$\varepsilon$, so $\mathcal L_0W=f_0$ there. If $W(X_*,Y_*)=0$ for
some $X_*,Y_*>0$, monotonicity and nonnegativity would force $W=0$ in
the rectangle $0<X<X_*$, $0<Y<Y_*$. This contradicts $f_0>0$.
\end{proof}

The convergence in Proposition~\ref{prop:compact} includes contact points
with $Y>0$. It does not include derivative convergence at $Y=0$ or
convergence of the free boundaries. The growth bound does give a
continuous zero trace on $\{X\le0,Y=0\}$.

The excess value at the threshold is of the exact order of the
variance.

\begin{lemma}\label{lem:contactnondegenerate}
There are $c_*,C,v_0>0$ such that
\begin{equation}\label{eq:contactnondegenerate}
 c_*v\le R(B,v)\le Cv,\qquad 0<v<v_0.
\end{equation}
\end{lemma}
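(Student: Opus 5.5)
The upper bound $R(B,v)\le Cv$ is already Proposition~\ref{prop:contactbound}, so only the lower bound $R(B,v)\ge c_*v$ needs proof. I would argue by contradiction through the isotropic blowup. Suppose there are $\varepsilon_n\searrow0$ with $R(B,\varepsilon_n)/\varepsilon_n\to0$. In the rescaling \eqref{eq:blowup} this reads $W_{\varepsilon_n}(0,1)\to0$. By Proposition~\ref{prop:compact}, a subsequence converges in $C^{1,\alpha}_{\loc}(\HH)$ to a limit $W$ satisfying \eqref{eq:limitbounds}--\eqref{eq:limitvi}. The limit then has $W(0,1)=0$. Since $W$ is nonnegative and nondecreasing in both variables, $W\equiv0$ on $\{X\le0,\ 0<Y\le1\}$.

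The goal is to contradict this flat contact region. Proposition~\ref{prop:compact} already gives $W>0$ on $\{X>0,Y>0\}$. So on the strip $0<Y<1$ the limit vanishes exactly for $X\le0$ and satisfies $\mathcal L_0W=f_0$ for $X>0$. Every ingredient of Lemma~\ref{lem:flat} is available away from $Y=0$. The operator $\mathcal L_0$ is uniformly elliptic on $\{Y\ge Y_1\}$ for any $Y_1>0$, and the viscosity inequality $\mathcal L_0W\le f_0$ holds across the interface. Lemma~\ref{lem:flat} then gives
\[
W(X,Y)=\frac{f_0}{Y}X^2+o(X^2)\qquad(X\searrow0)
\]
for each fixed $0<Y<1$. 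Now fix $Y_1<Y_2$ in $(0,1)$. Because $f_0/Y_1>f_0/Y_2$, this expansion gives $W(X,Y_1)>W(X,Y_2)$ for small $X>0$, which contradicts monotonicity in $Y$. This is the same mechanism as in Propositions~\ref{prop:strict} and \ref{prop:lower}.

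The main obstacle is that Lemma~\ref{lem:flat} needs the limit free boundary to be exactly the line $X=0$ near the chosen points, and this must be derived from $W(0,1)=0$. The argument is the one above: monotonicity pins the contact set to contain $\{X\le0,Y\le1\}$, and Proposition~\ref{prop:compact} excludes contact in $X>0$. A second point to check is that the contradiction hypothesis really yields $W(0,1)=0$, which holds because $C^{1,\alpha}_{\loc}$ convergence is pointwise at $(0,1)\in\HH$. With both steps in hand the contradiction follows, so $\liminf_{v\searrow0}R(B,v)/v>0$ and the bound \eqref{eq:contactnondegenerate} holds for small $v$.
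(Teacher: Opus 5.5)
Your proposal is correct and matches the paper's proof step for step. Both argue by contradiction with the isotropic blowup limit of Proposition~\ref{prop:compact}, use monotonicity to force $W=0$ on $\{X\le0,\ 0<Y<1\}$, and apply Lemma~\ref{lem:flat} to get $W(X,Y)=\frac{f_0}{Y}X^2+o(X^2)$, which contradicts monotonicity in $Y$.
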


\begin{proof}
The upper bound is Proposition~\ref{prop:contactbound}. If the lower
bound failed, there would be $\varepsilon_n\searrow0$ with
$W_{\varepsilon_n}(0,1)\to0$. Pass to a limit $W$ using
Proposition~\ref{prop:compact}. Nonnegativity and monotonicity give
$W=0$ on $\{X\le0,0<Y<1\}$. On $\{X>0,0<Y<1\}$, one has
$\mathcal L_0W=f_0$. Apply Lemma~\ref{lem:flat} locally along $X=0$.
For every fixed $Y\in(0,1)$,
\[
 W(X,Y)=\frac{f_0}{Y}X^2+o(X^2)\qquad(X\searrow0).
\]
At two heights $0<Y_1<Y_2<1$, this contradicts monotonicity in $Y$.
\end{proof}

The next estimate prepares the separation argument. We use the usual
completed Brownian filtration. Write $\mu=r-q$.
The estimate below concerns a process started at $(B,0)$ and does
not impose any condition on the exercise boundary.

\begin{lemma}\label{lem:smalltimeentry}
For every $M>0$, there are $p_M,\varepsilon_M>0$ such that
\begin{equation}\label{eq:smalltimeentry}
 \PP_{B,0}\left(0<Y_\varepsilon<\varepsilon,\quad
       \log\frac{S_\varepsilon}B<-(M+1)\varepsilon\right)\ge p_M
 \quad(0<\varepsilon<\varepsilon_M).
\end{equation}
\end{lemma}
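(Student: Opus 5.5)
We will prove \eqref{eq:smalltimeentry} by Brownian scaling: under the parabolic rescaling the event becomes a fixed event for a limiting pair of processes, and that event has positive probability. Write $Y_t=\varepsilon\widetilde Y_{t/\varepsilon}$. Since the CIR equation started at zero is invariant in law under this scaling up to the mean-reversion term, $\widetilde Y$ solves
\[
 d\widetilde Y_u=(a-\kappa\varepsilon\widetilde Y_u)\,du
          +\sigma\sqrt{\widetilde Y_u}\,d\widetilde B^2_u,\qquad\widetilde Y_0=0,
\]
with $\widetilde B^2_u=\varepsilon^{-1/2}B^2_{\varepsilon u}$ a Brownian motion. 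Similarly,
\[
 \frac1\varepsilon\log\frac{S_\varepsilon}B
 =\mu-\frac\varepsilon2\int_0^1\widetilde Y_u\,du
   +\int_0^1\sqrt{\widetilde Y_u}\,d\widetilde B^1_u ,
\]
where $\widetilde B^1_u=\varepsilon^{-1/2}B^1_{\varepsilon u}$. The event in \eqref{eq:smalltimeentry} therefore becomes
\[
 \Bigl\{0<\widetilde Y_1<1,\ \int_0^1\sqrt{\widetilde Y_u}\,d\widetilde B^1_u
 <-(M+1)-\mu+\tfrac\varepsilon2\textstyle\int_0^1\widetilde Y_u\,du\Bigr\}.
\]
Since the last term is nonnegative, it suffices to bound from below the probability of
\[
 \Bigl\{0<\widetilde Y_1<1,\ \int_0^1\sqrt{\widetilde Y_u}\,d\widetilde B^1_u<-(M+1+|\mu|)\Bigr\}.
\]

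Next decompose $\widetilde B^1=\rho\widetilde B^2+\sqrt{1-\rho^2}\,W$ with $W$ independent of $\widetilde B^2$. Conditionally on $\widetilde B^2$, and hence on $\widetilde Y$, the term $\sqrt{1-\rho^2}\int\sqrt{\widetilde Y}\,dW$ is centered Gaussian with variance $(1-\rho^2)I$, where $I=\int_0^1\widetilde Y_u\,du$. The term $G=\rho\int_0^1\sqrt{\widetilde Y}\,d\widetilde B^2$ is measurable with respect to $\widetilde B^2$ and has second moment at most $\rho^2\E I\le C$, uniformly for $\varepsilon\le1$. We now choose constants $\eta>0$ and $K_0$, uniform in small $\varepsilon$, such that
\[
 q:=\PP\bigl(0<\widetilde Y_1<1,\ I\ge\eta,\ |G|\le K_0\bigr)>0 .
\]
On this event the conditional Gaussian probability of falling below $-(M+1+|\mu|)-K_0$ is at least
\[
 \Phi\!\Bigl(-\frac{M+1+|\mu|+K_0}{\sqrt{(1-\rho^2)\eta}}\Bigr).
\]
Taking $p_M$ to be $q$ times this quantity gives the claim.

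The main obstacle is the uniform positivity of $q$ in $\varepsilon$. We handle it by continuity in law: as $\varepsilon\to0$, the process $\widetilde Y$ converges in law on $C[0,1]$ to the squared Bessel-type process $Z$ solving $dZ=a\,du+\sigma\sqrt Z\,d\beta$, $Z_0=0$. Pathwise uniqueness via Yamada--Watanabe, together with tightness from moment bounds, gives this. For the limit $Z$, the law of $Z_1$ is a scaled Gamma law with a positive density on $(0,\infty)$. So $\PP(0<Z_1<1)$ is positive, $\PP(Z_1\in\{0,1\})=0$, and $Z_1>0$ almost surely. Since $I=0$ forces $Z\equiv0$, which is impossible given the immigration $a>0$, we have $I>0$ almost surely. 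Hence $\eta$ small and $K_0$ large make
\[
 \PP\bigl(0<Z_1<1,\ I\ge2\eta\bigr)-\PP\bigl(|G|>K_0\bigr)>0,
\]
using Chebyshev's inequality for $G$ uniformly in $\varepsilon$.

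To transfer this to small $\varepsilon$, we avoid weak convergence of the stochastic integral $G$ and use only its uniform Chebyshev bound. The first event is an open condition in $C[0,1]$ up to null boundary sets, so the portmanteau theorem gives the lower bound for $\varepsilon<\varepsilon_M$. Alternatively, comparison avoids the limit entirely: for $\varepsilon\le1$ the drift is at least $a-\kappa\widetilde Y$. The CIR process with parameters $(\kappa,\theta)$ at time $1$, with mean reversion scaled by $\varepsilon$, has a density and nondegenerate integrated path. Continuity of its transition law in the mean-reversion parameter then gives the same uniform positivity.
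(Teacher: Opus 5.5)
Your proof is correct, but it distributes the work differently from the paper. The paper couples the rescaled variance with the limit $dZ=a\,dt+\sigma\sqrt Z\,dW$ through the comparison $0\le Z^\varepsilon\le Z$. It then uses $\E|Z_t-Z_t^\varepsilon|\le C\varepsilon t^2$ and It\^o's isometry to get $(\widetilde X_1^\varepsilon,Z_1^\varepsilon)\to(\widetilde X_1,Z_1)$ in probability. The conditional Gaussian structure is used only for the limit, where the correlated part is explicit, namely $\rho(Z_1-a)/\sigma$. The proof finishes with the open-set portmanteau inequality in $\R^2$.

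You instead apply the conditional Gaussian bound at each fixed $\varepsilon$ and handle the correlated part $G$ by a second-moment bound that is uniform in $\varepsilon$. You then need uniformity only for the variance event $\{0<\widetilde Y_1<1,\ I\ge\eta\}$. This avoids any limit of the stochastic integral and gives the explicit lower bound $q\,\Phi(\cdot)$. The price is that $I$ is a path functional, so you need convergence in law on $C[0,1]$ rather than time-one convergence.

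For the portmanteau step, the right open set is $\{y:0<y(1)<1,\ \int_0^1y>\eta\}$, whose $Z$-probability dominates $\PP(0<Z_1<1,\ I\ge2\eta)$. The slack between $\eta$ and $2\eta$, not a null-boundary property, is what makes this step work.

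Your ``alternative'' can be made precise and limit-free with the paper's comparison. Drive everything by one Brownian motion and let $\widetilde Y^{(1)}$ be the $\varepsilon=1$ process. Then $\widetilde Y^{(1)}\le\widetilde Y^\varepsilon\le Z$ for $0<\varepsilon\le1$. Up to a null set, $\{Z_1<1,\ \int_0^1\widetilde Y^{(1)}_u\,du\ge\eta\}$ lies in your variance event for every such $\varepsilon$, and its probability is positive for small $\eta$. No continuity of transition laws is needed.
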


\begin{proof}
Brownian scaling identifies the law of
$(\varepsilon^{-1}\log(S_{\varepsilon t}/B),
  \varepsilon^{-1}Y_{\varepsilon t})$, $0\le t\le1$, with that of
$(\widetilde X_t^\varepsilon,Z_t^\varepsilon)$ solving
\[
 \begin{aligned}
 dZ_t^\varepsilon&=(a-\kappa\varepsilon Z_t^\varepsilon) dt
                       +\sigma\sqrt{Z_t^\varepsilon} dW_t,\\
 d\widetilde X_t^\varepsilon&=\Bigl(\mu-\frac{\varepsilon Z_t^\varepsilon}2\Bigr) dt
                       +\sqrt{Z_t^\varepsilon} dB_t,
 \end{aligned}
 \qquad Z_0^\varepsilon=\widetilde X_0^\varepsilon=0,
 \qquad  d\langle B,W\rangle_t=\rho dt.
\]
Couple these equations with $ dZ_t=a dt+\sigma\sqrt{Z_t} dW_t$,
$Z_0=0$. One-dimensional comparison gives $0\le Z_t^\varepsilon\le Z_t$.
It can be obtained by Tanaka's formula: the local time at zero of
the difference vanishes for the square-root diffusion coefficient,
and the drift on the set $Z^\varepsilon>Z$ is nonpositive. The first
moments yield
\[
 \E|Z_t-Z_t^\varepsilon|
 =at-\frac{a(1-e^{-\kappa\varepsilon t})}{\kappa\varepsilon}
 \le C\varepsilon t^2,\qquad 0\le t\le1.
\]
Since $(\sqrt u-\sqrt v)^2\le|u-v|$, Ito's isometry implies
\[
 \widetilde X_1^\varepsilon\longrightarrow
 \widetilde X_1:=\mu+\int_0^1\sqrt{Z_t} dB_t
 \quad\hbox{in probability},\qquad
 Z_1^\varepsilon\longrightarrow Z_1\quad\hbox{in }L^1.
\]
The extra drift tends to zero in $L^1$ because
$\varepsilon\E\int_0^1Z_t^\varepsilon dt\le a\varepsilon/2$.

The elementary affine Laplace-transform calculation gives
\[
 \E e^{-\lambda Z_1}=(1+D\lambda)^{-\nu},\qquad\lambda\ge0.
\]
For example, apply Ito's formula to
$(1+D\lambda(1-t))^{-\nu}
 \exp\{-\lambda Z_t/(1+D\lambda(1-t))\}$.
Thus $Z_1$ has a gamma distribution with shape $\nu$ and scale $D$,
and $\PP(0<Z_1<1)>0$.
Write $B=\rho W+\sqrt{1-\rho^2}\,W^\perp$. Conditionally on $W$,
$\widetilde X_1$ is Gaussian with mean
$\mu+\rho(Z_1-a)/\sigma$ and variance
$(1-\rho^2)\int_0^1Z_t dt$. The latter is strictly positive on
$\{0<Z_1<1\}$. Therefore
\[
 \PP(\widetilde X_1<-(M+1),\ 0<Z_1<1)>0.
\]
Joint convergence and the open-set inequality for weak convergence
prove \eqref{eq:smalltimeentry}.
\end{proof}

If the ratio $d(v)/v$ stays bounded along a sequence, the process
enters the exercise region immediately and stock smooth fit holds.

\begin{lemma}\label{lem:entrysmoothfit}
Suppose that, for some $M>0$ and a sequence $v_n\searrow0$,
\begin{equation}\label{eq:boundedratioassumption}
d(v_n)\le Mv_n.
\end{equation}
Then the process started at $(B,0)$ enters the interior of the
exercise region at arbitrarily small positive times almost surely.
Moreover,
\begin{equation}\label{eq:conditionalstocksmoothfit}
 R_s(B+,0)=0,\qquad \lim_{v\searrow0}R_s(B,v)=0.
\end{equation}
\end{lemma}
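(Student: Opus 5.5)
The plan is to turn the small-time estimate of Lemma~\ref{lem:smalltimeentry} into a positive-probability entry statement along the sequence $v_n$. Blumenthal's $0$--$1$ law then upgrades it to an almost sure statement. The stock smooth fit follows from a pathwise coupling in the initial stock and Fatou's lemma.

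\emph{Entry.} Apply Lemma~\ref{lem:smalltimeentry} with the given $M$ and $\varepsilon=v_n$ for $n$ large. With probability at least $p_M$ we have $0<Y_{v_n}<v_n$ and $\log(S_{v_n}/B)<-(M+1)v_n$. On this event, strict monotonicity of $b$ (Proposition~\ref{prop:strict}) and \eqref{eq:boundedratioassumption} give
\[
 b(Y_{v_n})\ge b(v_n)=Be^{-d(v_n)}\ge Be^{-Mv_n}>e^{v_n}S_{v_n}.
\]
So $S_{v_n}$ lies strictly below the boundary at the positive variance $Y_{v_n}$. The boundary $b$ is continuous at positive variance and $Y_{v_n}>0$. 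Hence a small open neighbourhood of $(S_{v_n},Y_{v_n})$ lies in $\{s<b(y),\,y>0\}$, which is contained in the exercise region. Let $A_t$ be the event that the process started at $(B,0)$ visits the interior of the exercise region at some time in $(0,t]$. Then $\PP(A_{v_n})\ge p_M$ for all large $n$. Since the $A_t$ decrease as $t\searrow0$, the limit event $\bigcap_t A_t$ has probability at least $p_M$. It belongs to the germ $\sigma$-field of the driving Brownian motion, because the process is a strong solution started from a fixed point. Blumenthal's $0$--$1$ law in the completed filtration therefore gives probability one. The point that needs care is the measurability of this event and its membership in the germ field. I would handle it by restricting to rational times and using the openness of the interior.

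\emph{Smooth fit at zero variance.} Write $S=sZ$ with a common $Z$, as in the proof of Lemma~\ref{lem:basic}. Let $\sigma_h$ be the first entry time into the exercise region for the process started at $(Be^h,0)$; it is optimal for the perpetual problem. The elementary inequality $(K-s'z)^+-(K-sz)^+\ge-(s'-s)z\,1\{s'z<K\}$, valid for $s'>s$, together with $V(B,0)\ge\E[e^{-r\sigma_h}(K-BZ_{\sigma_h})^+]$, yields
\[
 R(Be^h,0)\le B(e^h-1)\Bigl(1-\E\bigl[e^{-r\sigma_h}Z_{\sigma_h}
 1\{Be^hZ_{\sigma_h}<K\}\bigr]\Bigr).
\]
Take almost every path. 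By the entry step it visits an open subset of the exercise interior at times $t_k\searrow0$. The scaled path $e^hBZ$ then also lies in that open set at $t_k$ once $h$ is small, so $\sigma_h\to0$ almost surely. Consequently the integrand tends to $1$, and Fatou's lemma gives $\liminf_h\E[\cdots]\ge1$. Thus $R(Be^h,0)=o(e^h-1)$, and since $R(B,0)=0$ and $R(\cdot,0)$ is convex and nonnegative, $R_s(B+,0)=0$.

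\emph{Positive variance.} For $v>0$ and $h>0$, convexity gives
\[
 0\le R_s(B,v)\le\frac{R(Be^h,v)-R(B,v)}{B(e^h-1)}\le\frac{R(Be^h,v)}{B(e^h-1)},
\]
using one-sided derivatives if necessary. Let $v\searrow0$ with $h$ fixed; continuity of $V$ from Lemma~\ref{lem:basic} bounds the $\limsup$ by $R(Be^h,0)/(B(e^h-1))$. Letting $h\searrow0$ and using the previous step gives the second identity in \eqref{eq:conditionalstocksmoothfit}.

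The main obstacle I expect is the entry step. It must produce an interior point rather than a boundary point, which is where strict monotonicity of $b$ and the margin $e^{v_n}$ are used. It must also place the event correctly in the germ field so that Blumenthal's law applies.
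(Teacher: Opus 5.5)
Your proof is correct and follows essentially the paper's argument: Lemma~\ref{lem:smalltimeentry} with monotonicity of $b$ to land strictly inside the exercise region along $v_n$, the Blumenthal zero--one law on a germ event, the coupling $S=sZ$ with first entry times tending to zero plus Fatou's lemma, and then the convexity bound for $R_s(B,v)$. The paper uses the germ event $\limsup_n E_n$ where you use the decreasing intersection of visit events, which works equally well. One slip: the elementary payoff inequality should read $(K-s'z)^+-(K-sz)^+\le-(s'-s)z\,1\{s'z<K\}$. With $\ge$ it fails when $sz<K\le s'z$, and $\le$ is exactly the direction your displayed bound on $R(Be^h,0)$ uses.
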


\begin{proof}
Define
\[
 E_n=\{0<Y_{v_n}<v_n,\quad
          \log\frac{S_{v_n}}B<-(M+1)v_n\}.
\]
On $E_n$, monotonicity of $b$ and
\eqref{eq:boundedratioassumption} imply
\[
 S_{v_n}<Be^{-(M+1)v_n}<Be^{-Mv_n}
              \le b(v_n)\le b(Y_{v_n}).
\]
Thus the state is strictly inside the exercise region. Lemma
\ref{lem:smalltimeentry} gives $\liminf_n\PP(E_n)>0$.
The event $A=\limsup_n E_n$ belongs to the Brownian germ sigma-field:
for every $N$, it is measurable with respect to $\mathcal F_{v_N}$.
Also $\PP(A)\ge\limsup_n\PP(E_n)>0$. The Brownian zero--one law
therefore gives $\PP(A)=1$. No independence between the $E_n$ is
needed. To recall the zero--one argument, a germ event is independent
of Brownian increments after every positive time. Letting that time
decrease to zero makes it independent of the whole Brownian path,
and hence of itself.

Couple all initial stocks at zero variance by $S_t^{s,0}=sN_t$,
where $N_t=S_t^{1,0}$. For $0<h<K-B$, let $\tau_h$ be the first
contact time for the initial state $(B+h,0)$. This is optimal by
the Snell-envelope principle: the discounted reward is continuous,
bounded, and tends to zero at infinity. This standard principle
also allows $\tau_h=\infty$, with reward zero~\cite{PS}.
On a path in $A$, there are arbitrarily large $n$ for which $E_n$
occurs. For each such $n$ and all sufficiently small $h>0$,
\[
 (B+h)N_{v_n}<b(Y_{v_n}),\qquad \tau_h\le v_n.
\]
Consequently $\tau_h\to0$ almost surely as $h\searrow0$.

Put $\phi(s)=(K-s)^+$. Optimality for $B+h$ and admissibility of
the same stopping time for $B$ give
\begin{equation}\label{eq:smoothfitquotient}
 -1\le\frac{V(B+h,0)-V(B,0)}h
 \le\E\left[e^{-r\tau_h}
       \frac{\phi((B+h)N_{\tau_h})-\phi(BN_{\tau_h})}h\right].
\end{equation}
The integrand is defined as zero on $\{\tau_h=\infty\}$. It is
nonpositive and tends almost surely to $-1$, since $N_{\tau_h}\to1$
and $B<K$. Fatou's lemma applied to its negative shows that the
limsup of the right side is at most $-1$. This proves
$V_s(B+,0)=-1$, and hence the first assertion of
\eqref{eq:conditionalstocksmoothfit}.

For a fixed $\ell>0$, stock convexity gives
\[
 0\le R_s(B,v)
 \le\frac{R(B+\ell,v)-R(B,v)}\ell
 \le\frac{R(B+\ell,v)}\ell.
\]
The derivative exists for $v>0$ because $B>b(v)$. Send $v\searrow0$
using continuity of $R$, then send $\ell\searrow0$ using the stock
smooth fit just proved. This gives the second assertion.
\end{proof}

The smooth-fit conclusions of this lemma are obtained under
\eqref{eq:boundedratioassumption}. They are not used as prior
regularity assumptions on the model.

\begin{theorem}\label{thm:separation}
Under \eqref{eq:parameters},
\begin{equation}\label{eq:separation}
 \lim_{v\searrow0}\frac{d(v)}v=\infty.
\end{equation}
Consequently,
\begin{equation}\label{eq:endpointgeometry}
 \lim_{v\searrow0}\frac{b(v)-B}{v}=-\infty,
 \qquad \gamma'(0+)=0.
\end{equation}
\end{theorem}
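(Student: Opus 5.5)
We argue by contradiction. Suppose there are $M>0$ and a sequence $v_n\searrow0$ with $d(v_n)\le Mv_n$. Then Lemma~\ref{lem:entrysmoothfit} applies, and it yields the smooth-fit conclusions \eqref{eq:conditionalstocksmoothfit}: $R_s(B+,0)=0$ and $R_s(B,v)\to0$ as $v\searrow0$. We will contradict these using the blowup limits of Proposition~\ref{prop:compact} along the same sequence, normalized so that the contact point sits at a controlled location.

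The blowup is taken at $\varepsilon_n=v_n$. By Proposition~\ref{prop:compact}, a subsequence of $W_{\varepsilon_n}$ converges to a limit $W$. Because $d(\varepsilon_nY)\le d(\varepsilon_n)\le M\varepsilon_n$ for $Y\le1$, the rescaled free boundary over $0<Y\le1$ lies in the strip $-M\le X\le0$. The region $\{X<-M,\,0<Y<1\}$ is contact for every $n$, so $W=0$ there.

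Lemma~\ref{lem:contactnondegenerate} gives $W(0,1)\ge c_*>0$. Convexity in $X$ together with $W=0$ for $X<-M$ then forces a slope lower bound: at $Y=1$,
\[
 W_X(0,1)\ge\frac{W(0,1)}{M}\ge\frac{c_*}{M}.
\]
By $C^{1,\alpha}_{\loc}$ convergence, this gives $\varepsilon_n^{-1}\cdot\varepsilon_n w_x(x_0,\varepsilon_n)\to W_X(0,1)$, that is, $w_x(x_0,v_n)\to W_X(0,1)$. Since $w_x=e^xR_s$, we get
\[
 BR_s(B,v_n)\to W_X(0,1)\ge\frac{c_*}{M}>0.
\]
This contradicts $R_s(B,v)\to0$ from \eqref{eq:conditionalstocksmoothfit}. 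We must check the scaling here: $W_{\varepsilon,X}(X,Y)=w_x(x_0+\varepsilon X,\varepsilon Y)$, so there is no extra factor, and the convergence is a genuine convergence of $w_x$. The point $(0,1)$ lies in $\HH$, so it is covered by the $C^{1,\alpha}_{\loc}(\HH)$ convergence, and no derivative information at $Y=0$ is needed.

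We expect the main subtlety to be confirming that Lemma~\ref{lem:entrysmoothfit} requires only the sequential bound. It does: it needs \eqref{eq:boundedratioassumption} only along $v_n$, and we use exactly the same $v_n$. Hence $\liminf d(v)/v<\infty$ is impossible, which proves \eqref{eq:separation}.

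For \eqref{eq:endpointgeometry}, note that $B-b(v)=B(1-e^{-d(v)})\ge Bd(v)/2$ once $d(v)\to0$, which holds by Proposition~\ref{prop:upper}. Therefore
\[
 \frac{b(v)-B}{v}\le-\frac{B}{2}\,\frac{d(v)}{v}\to-\infty.
\]
For the inverse, set $\delta=d(v)$, so that $\gamma(\delta)=v$. By continuity and strict monotonicity, $\delta\searrow0$ exactly when $v\searrow0$. Then
\[
 \frac{\gamma(\delta)-\gamma(0)}{\delta}=\frac{v}{d(v)}\to0,
\]
which gives the one-sided derivative $\gamma'(0+)=0$.
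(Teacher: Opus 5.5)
Your argument is correct and is essentially the paper's proof: the same contradiction hypothesis, Lemma~\ref{lem:entrysmoothfit} for $R_s(B,v_n)\to0$, Lemma~\ref{lem:contactnondegenerate} for $R(B,v_n)\ge c_*v_n$, and convexity between the contact point and $B$. The only difference is that the paper applies the convexity inequality $R(B,v_n)\le(B-b(v_n))R_s(B,v_n)$ directly at the original scale. Your passage to a blowup limit through Proposition~\ref{prop:compact} is therefore harmless but unnecessary, since your slope bound $w_x(x_0,v_n)\ge R(B,v_n)/(Mv_n)\ge c_*/M$ already holds for each $n$ by convexity of $w$ in $x$.
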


\begin{proof}
If \eqref{eq:separation} failed, there would be a sequence satisfying
\eqref{eq:boundedratioassumption}. Convexity and $R(b(v),v)=0$ give
\[
 R(B,v)\le(B-b(v))R_s(B,v).
\]
Combine this with Lemmas~\ref{lem:contactnondegenerate} and
\ref{lem:entrysmoothfit}. Along the proposed sequence,
\[
 c_*v_n\le R(B,v_n)
 \le B(1-e^{-d(v_n)})R_s(B,v_n)
 \le BMv_n R_s(B,v_n)=o(v_n),
\]
a contradiction. Since $B-b(v)\sim Bd(v)$, the first conclusion
in \eqref{eq:endpointgeometry} follows. For the second, put
$\delta=d(v)$ and use $\gamma(\delta)/\delta=v/d(v)\to0$.
\end{proof}

These limits concern endpoint difference quotients. Continuity of
$\gamma'$ at zero and convergence of $b'(v)$ require a separate
argument, which is given in Theorem~\ref{thm:endpointregularity}.

We finally show that the tangent is continuous at the endpoint. The
anisotropic scale of Section~\ref{sec:axis} is useful for the boundary gap, but its
ellipticity constants are not uniform. To study the tangent, we instead
use an isotropic scale equal to the variance at a moving contact point.

\begin{theorem}\label{thm:endpointregularity}
For some $\delta_0>0$, the inverse boundary satisfies
\begin{equation}\label{eq:endpointc1}
 \gamma\in C^1([0,\delta_0)),\qquad
 \gamma'(0)=0,\qquad \gamma'(\delta)>0\quad(\delta>0).
\end{equation}
There is a nondecreasing function $\Omega$, depending on the model,
with $\Omega(s)\to0$ as $s\searrow0$, such that
\begin{equation}\label{eq:slopemodulus}
 0<\gamma'(\delta)\le
       \Omega\!\left(\frac{\gamma(\delta)}{\delta}\right)
 \quad\hbox{for all sufficiently small }\delta>0.
\end{equation}
In particular, $b'(v)\to-\infty$ as $v\searrow0$.
The modulus in \eqref{eq:slopemodulus} is not asserted to be a power.
\end{theorem}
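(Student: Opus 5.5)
The idea is to rescale isotropically about a moving contact point, with the local variance as the scale, and to read the slope off a limiting free boundary. Fix a small $\delta>0$, put $v=\gamma(\delta)$, and let $p=(x_0-\delta,v)$ be the contact point. Define
\[
 U_v(X,Y)=\frac{w(x_0-\delta+vX,\,vY)}{v^2}.
\]
Under this scaling the operator $\Lop$, divided by $v$, becomes
\[
 \frac Y2\partial_{XX}+\rho\sigma Y\partial_{XY}+DY\partial_{YY}
 +\bigl(r-q-\tfrac{vY}2\bigr)\partial_X+(a-\kappa vY)\partial_Y-rv ,
\]
and the equation reads $\mathcal L^{(v)}U_v=f/v\cdot\chi$ after a harmless renormalization of the source. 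Near $Y=1$, say on $[1/2,2]$, this operator is uniformly elliptic with constants independent of $v$, because $Y$ stays bounded away from zero. The local $W^{2,p}$ and $C^{1,1}$ obstacle estimates from the proof of Proposition~\ref{prop:positiveregularity} therefore apply uniformly. The source must stay bounded after rescaling, so I will instead normalize by $v$: set $U_v=w(\cdot)/v$ and write the equation as $\Lop w = f$, with second-order terms of size $v\cdot v^{-2}\cdot v=1$. A uniform bound on $U_v$ near $(0,1)$ follows from Proposition~\ref{prop:contactbound} together with the stock Lipschitz bound.

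In the rescaled picture, the contact set is $\{X\le(\,x_0-d(vY)-(x_0-\delta)\,)/v\}$, and its boundary is the graph $X=\Gamma_v(Y)=(\delta-d(vY))/v$. The monotonicity of $d$ and the cone argument of Proposition~\ref{prop:positiveregularity} make $\Gamma_v$ uniformly Lipschitz in the rotated coordinates.

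The key steps run in the following order.

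First, I show that the horizontal derivative tends to zero: $\Gamma_v'(Y)=-d'(vY)\to0$ locally uniformly near $Y=1$. Equivalently, the free boundary flattens into a vertical line in the $(X,Y)$ plane; in terms of the inverse parametrization this says $\gamma'$ is small. I get this from Theorem~\ref{thm:separation}. Since $d(vY)/(vY)\to\infty$, one would naively expect $\Gamma_v$ to vary on scales much larger than one, so I argue by compactness. Take a sequence $v_n\to0$. The rescaled solutions converge in $C^{1,\alpha}_{\loc}$ near $Y=1$ to a global-type solution $U$ of a constant-coefficient obstacle problem in a neighbourhood of $(0,1)$. Its zero set contains $\{X\le\Gamma(Y)\}$, where $\Gamma$ is the limit of the $\Gamma_{v_n}$. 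Because $d(vY)-d(v)\ge0$ for $Y\ge1$ and is large relative to $v$ on average, the slopes of the $\Gamma_{v_n}$ diverge. More precisely, the contact sets in rescaled coordinates fill left half-planes determined by horizontal lines. I then classify the limit: it is a one-dimensional profile in $Y$ whose contact set is $\{Y\le1\}$, that is, a half-space solution with a horizontal free boundary. This is consistent with the positivity of $W$ on $X>0$ and with nondegeneracy.

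Second, I transfer flatness of the limit to the actual free boundaries using Caffarelli's uniform regularity theorem \cite{Caffarelli}. The hypotheses H1–H2 hold uniformly here: the $C^{1,1}$ bound, a positive $C^{1/2}$ source equal to $f/Y$ times normalization, bounded below near $Y=1$, and the Lipschitz chart. This gives a uniform $C^{1,\beta}$ modulus for the rescaled boundaries near $(0,1)$. Combined with compactness and uniqueness of the half-space blowup, it yields convergence of the normals, so the normal at $(0,1)$ tends to the vertical direction. Quantitatively, the normal direction is controlled by the distance of the blowup sequence to the half-space solution. That distance in turn is controlled by $\gamma(\delta)/\delta=v/d(v)$, which is the quantity measuring how far the contact set deviates from vertical separation at unit rescaled scale. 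A contradiction-compactness argument then produces the modulus $\Omega$ in \eqref{eq:slopemodulus}: for each $\eta$ there is $s_\eta$ such that $\gamma(\delta)/\delta<s_\eta$ implies $\gamma'(\delta)<\eta$.

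Third, \eqref{eq:endpointc1} follows by combining this with $\gamma'>0$ from Proposition~\ref{prop:positiveregularity}, with $\gamma'(0+)=0$ and $\gamma(\delta)/\delta\to0$ from Theorem~\ref{thm:separation}, and with the fact that $\gamma'(0)=0$ as a difference quotient. The conclusion $b'\to-\infty$ is then \eqref{eq:inverseboundaryderivative}.

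The main obstacle is the classification and nondegeneracy step in the first part: I must make sure the limiting contact set is genuinely a half-plane $\{Y<1\}$ near $(0,1)$ and not degenerate. I would first try using Lemma~\ref{lem:contactnondegenerate} rescaled, together with separation, to show that $U$ vanishes on $\{Y<1\}$ near $X=0$ and is positive above. Lemma~\ref{lem:flat} then fixes the quadratic profile in $Y$, and this rules out any other normal direction.
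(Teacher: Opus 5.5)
Your framework matches the paper's: you rescale isotropically by the local variance at the moving contact point, use uniform ellipticity and $C^{1,1}$ bounds near $Y=1$, and apply Caffarelli's estimates uniformly to pass information from a limit to the normals. The step that carries the content, however, is missing.

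First, your rescaled boundary satisfies $\Gamma_v'(Y)=-d'(vY)=-1/\gamma'(d(vY))$. So what must be shown is $|\Gamma_v'|\to\infty$ near $Y=1$. Your ``$\Gamma_v'\to0$'' would make the boundary vertical and give $\gamma'\to\infty$. After this correction, your first step is just a restatement of the theorem.

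Your justification for it, and for the classification of the limit, rests on Theorem~\ref{thm:separation}. But $d(v)/v\to\infty$ controls only the average slope of $d$ on $[0,v]$. It says nothing about the increments $d(vY)-d(v)$ on the scale $v$, which is what $\Gamma_v$ measures. A strictly increasing $d$ with $d(v)/v\to\infty$ may still have bounded $d'$ along a sequence of points; this is why the paper stresses that the endpoint difference quotient does not by itself give continuity of $\gamma'$. Nothing else you invoke (monotonicity, the Lipschitz chart, nondegeneracy, $U_v\le CY$) distinguishes a horizontal limiting free boundary from a tilted one with normal in the open first quadrant. Your proposed fix does not help either: Lemma~\ref{lem:contactnondegenerate} concerns $s=B$, which in the moving frame lies at $X=d(v)/v\to\infty$, outside the window around $(0,1)$.

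The missing idea is a bound on the solution's horizontal derivative, not on the boundary's slope. Stock convexity, the secant inequality and Proposition~\ref{prop:contactbound} give $0\le w_x(x,y)\le e^xR(B,y)/(B-e^x)\le Cy/(x_0-x)$. In your frame this reads $0\le U_{v,X}\le C\eta$, with $\eta=v/d(v)=\gamma(\delta)/\delta\to0$. Hence every limit is independent of $X$ near $(0,1)$, and it vanishes on $\{Y\le1\}$ by $U(0,1)=0$ and monotonicity. Your classification and Caffarelli transfer then go through.

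The paper uses this bound quantitatively rather than by contradiction. It has $D^2U_v(0,1)=\lambda_v n_v\otimes n_v$ with $\lambda_v\ge c_*$ (the rescaled \eqref{eq:normalhessian}), and a uniform modulus $\omega$ for the Hessians from continuation. Integrating along the inward normal gives $t(c_*n_{v,x}-\omega(t))\le U_{v,X}\le C\eta$. Taking $t=\sqrt\eta$ yields $\gamma'(\delta)=n_{v,x}/n_{v,y}\le C(\sqrt\eta+\omega(\sqrt\eta))$, which is $\Omega$ explicitly.

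Alternatively, you could use the ratio bounds \eqref{eq:limitratios} of Theorem~\ref{thm:anisocompact}, whose proof does not use the present theorem. They give $\liminf d(vY)/d(v)\ge Y^{m}$ for $Y>1$ and $\limsup d(vY)/d(v)\le Y^{m}$ for $Y<1$. Hence $|d(vY)-d(v)|/v\to\infty$ for each $Y\ne1$, which would also close your classification step.
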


\begin{proof}
Let $\varepsilon>0$, $\delta=d(\varepsilon)$,
$\eta=\varepsilon/\delta$, and $x_\varepsilon=x_0-\delta$.
Define
\[
 F_\varepsilon(X,Y)=
       \frac{w(x_\varepsilon+\varepsilon X,\varepsilon Y)}\varepsilon,
 \qquad p_*=(0,1).
\]
The point $p_*$ is on the rescaled free boundary. On every fixed
rectangle about $p_*$ contained in $Y>0$, small $\varepsilon$
ensures $x_\varepsilon+\varepsilon X<x_0$, because
$\eta\to0$ by Theorem~\ref{thm:separation}. Hence
$0\le F_\varepsilon\le CY$ there. Its operator is
$\mathcal L_\varepsilon$ from \eqref{eq:scaledpde}, with source
$f_\varepsilon(X)=f(x_\varepsilon+\varepsilon X)$.
These operators are uniformly elliptic on the rectangle, with
uniformly smooth coefficients and sources. Local obstacle estimates
give uniform $W^{2,p}$ and $C^{1,\alpha}$ bounds on smaller rectangles.
Also, stock convexity and \eqref{eq:contactbound} give
\begin{equation}\label{eq:smallmovingderivative}
 0\le F_{\varepsilon,X}=w_x
 \le\frac{C\varepsilon Y e^{x_\varepsilon+\varepsilon X}}
                 {B-e^{x_\varepsilon+\varepsilon X}}
 \le C\eta
\end{equation}
on a fixed neighborhood of $p_*$.

We record why boundary regularity can be used uniformly here.
After dividing the rescaled equation by $Y$, its principal matrix
is $A_0$ in \eqref{eq:constantprincipal}. The source is
\[
 Q_\varepsilon=
 \frac{f_\varepsilon-\Bigl(r-q-\frac{\varepsilon Y}2\Bigr)F_{\varepsilon,X}
 -(a-\kappa\varepsilon Y)F_{\varepsilon,Y}
 +r\varepsilon F_\varepsilon}{Y}.
\]
Choose the preceding $W^{2,p}$ estimate with $p=4$. On a fixed ball
$B_{r_1}(p_*)\subset\{1/2<Y<3/2\}$, it gives
$\|Q_\varepsilon\|_{C^{1/2}}\le M_1$, independently of
$\varepsilon$. Since $Q_\varepsilon(p_*)=f(x_\varepsilon)\to f_0>0$,
we may choose $0<r_0<r_1$ with $M_1r_0^{1/2}\le f_0/4$.
For all sufficiently small $\varepsilon$,
\[
 Q_\varepsilon\ge\frac{f_0}2\quad\hbox{on }B_{r_0}(p_*),\qquad
 \|F_\varepsilon\|_{C^{1,1}(B_{\frac{r_0}2}(p_*))}\le M_2,
\]
where the second bound follows from the Harnack and Schauder argument
in Proposition~\ref{prop:positiveregularity}.
Monotonicity gives the same rotated $1$-Lipschitz boundary chart on a
common cylinder. After the fixed linear change normalizing $A_0$,
the chart radius and Lipschitz constant remain uniform. The ellipticity
constants of $A_0$, the radius $r_0$, the bounds $M_1,M_2,f_0^{-1}$,
and this chart constant are the data in hypotheses H1--H2 and in the
Lipschitz-boundary argument of
\cite[Section 1.2, Theorem 3 and its proof]{Caffarelli}.
In \cite[Section 1.2]{Caffarelli} the estimates are stated in terms of
the distance to the rest of the boundary, the $C^{1,1}$ norm of the
solution, and the $C^{1/2}$ norm and positive lower bound of the
source, and \cite[Theorem 3]{Caffarelli} asserts that the modulus of
continuity of the derivative of the boundary graph does not depend on
the base point. The normal oscillation estimate and the subsequent
boundary Hessian estimate in that proof therefore give common moduli of
continuity.
These apply to the normals and to the Hessians taken from continuation
up to the boundary on a smaller common cylinder. All these cylinders
stay a fixed distance from $Y=0$.

Write $n_\varepsilon=(n_{\varepsilon,x},n_{\varepsilon,y})$ for the
inward unit normal at $p_*$. Both components are positive by
Proposition~\ref{prop:positiveregularity}. The rescaled form of
\eqref{eq:normalhessian} is
\[
 D^2F_\varepsilon(p_*)=
       \lambda_\varepsilon n_\varepsilon\otimes n_\varepsilon,
 \qquad
 \lambda_\varepsilon=
       \frac{f(x_\varepsilon)}{n_\varepsilon^T A_0 n_\varepsilon}.
\]
In particular $0<c_*\le\lambda_\varepsilon\le C$.
The common boundary modulus ensures that the inward normal segment
$p_*+t n_\varepsilon$, $0<t<t_0$, lies in continuation for a
fixed $t_0>0$. The Hessian modulus just obtained gives a
nondecreasing $\omega(t)\to0$, independent of $\varepsilon$, with
\[
 \left|D^2F_\varepsilon(p_*+t n_\varepsilon)
       -\lambda_\varepsilon n_\varepsilon\otimes n_\varepsilon
 \right|\le\omega(t),\qquad 0<t<t_0.
\]
Integrating along this segment from the zero gradient at contact
and using \eqref{eq:smallmovingderivative} yields
\[
 t\bigl(c_* n_{\varepsilon,x}-\omega(t)\bigr)
       \le F_{\varepsilon,X}(p_*+t n_\varepsilon)\le C\eta.
\]
Taking $t=\sqrt\eta$ gives
$n_{\varepsilon,x}\le C(\sqrt\eta+\omega(\sqrt\eta))\to0$.
Therefore $n_{\varepsilon,y}\to1$.

An isotropic rescaling preserves normals. The original boundary
written as $y=\gamma(x_0-x)$ has inward normal proportional to
$(\gamma'(\delta),1)$. Consequently
\[
 \gamma'(\delta)=\frac{n_{\varepsilon,x}}{n_{\varepsilon,y}}
       \le C\bigl(\sqrt\eta+\omega(\sqrt\eta)\bigr).
\]
This proves \eqref{eq:slopemodulus} with the displayed modulus,
after enlarging its constant. Together with
$\gamma'(0+)=0$ and analyticity at positive arguments, it proves
\eqref{eq:endpointc1}. Finally,
$\gamma'(d(v))=-b(v)/b'(v)$, with $b(v)\to B>0$, proves the
last assertion.
\end{proof}

\section{Boundary profiles and the axis equation}\label{sec:axis}

We begin with exact identities obtained by integrating the equation
along vertical lines. Choose $\delta_0,y_0>0$ small enough that the inverse boundary is
defined on $(0,\delta_0)$ and \eqref{eq:contactbound} holds below $y_0$.
For $x\in I=(x_0-\delta_0,x_0)$, write
\[
 g(x)=\gamma(x_0-x),\qquad g_h(x)=\min\{g(x),h\}.
\]
The contact set on a vertical line is $0\le y\le g(x)$. Set
\begin{equation}\label{eq:verticalremainder}
 \mathcal E=\kappa y w_y-\frac y2w_{xx}-\rho\sigma y w_{xy}
                -\Bigl(r-q-\frac y2\Bigr)w_x+rw.
\end{equation}
On compact subintervals of $I$, this is a locally integrable function
that vanishes for all sufficiently small $y$. All derivatives in the
following identities can therefore be interpreted as weak derivatives.

\begin{proposition}\label{prop:verticalidentities}
For almost every $x\in I$ and every $0<h<y_0$, one has
\begin{equation}\label{eq:exactflux}
 \frac{f(x)}a g_h(x)^\nu
 =h^\nu\left(\frac{f(x)}a-w_y(x,h)\right)
       +\frac1D\int_0^h t^{\nu-1}\mathcal E(x,t) dt.
\end{equation}
Define, for $0<t<h$,
\begin{equation}\label{eq:verticalkernel}
 K_\nu(h,t)=\frac{t^{\nu-1}}D\int_t^h s^{-\nu} ds,
 \qquad
 \Psi_\nu(g,h)=g+g^\nu\int_g^h t^{-\nu} dt
 \quad(0<g\le h),
\end{equation}
with $\Psi_\nu(0,h)=0$. Then
\begin{equation}\label{eq:exactvalue}
 w(x,h)=\frac{f(x)}a\bigl[h-\Psi_\nu(g_h(x),h)\bigr]
             +\mathcal R_h(x),\qquad
 \mathcal R_h(x)=\int_0^h K_\nu(h,t)\mathcal E(x,t) dt.
\end{equation}
Both identities also hold in distributions in $x$.
\end{proposition}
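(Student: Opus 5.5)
The plan is to integrate the equation $\Lop w=f\chi_{\CC}$ along vertical lines, after isolating its vertical part. Expanding \eqref{eq:operator} and comparing with \eqref{eq:verticalremainder} gives the algebraic identity
\[
 \Lop w=Dy\,w_{yy}+a\,w_y-\mathcal E .
\]
In the continuation region this yields $Dy\,w_{yy}+a\,w_y=f+\mathcal E$. In the interior of the contact set $w\equiv0$, so $\mathcal E=0$ there as well. By Proposition~\ref{prop:positiveregularity} the free boundary at positive variance is an analytic graph $y=g(x)$, hence of measure zero. By the local obstacle estimates $w\in W^{2,p}_{\loc}$ at positive variance, so $w$ is $C^{1,1}$ across it. Consequently, almost everywhere in $\{x\in I,\ 0<y<y_0\}$,
\[
 Dy\,w_{yy}+a\,w_y=\bigl(f(x)+\mathcal E\bigr)\chi_{\{y>g(x)\}},
\]
with $\mathcal E=0$ on $\{y<g(x)\}$.

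Next I multiply by the integrating factor $y^{\nu-1}/D$, which turns the left side into $(y^\nu w_y)_y$ because $a/D=\nu$. By Fubini applied to the $W^{2,p}_{\loc}$ function on compact positive-variance sets, for almost every $x\in I$ the map $y\mapsto w_y(x,y)$ is absolutely continuous on each $[\eta,h]$ with $\eta>0$, and the identity holds for a.e.\ $y$. For such $x$ we have $g(x)>0$, so $w(x,y)=w_y(x,y)=0$ for $0<y<g(x)$; this is also why $\mathcal E(x,\cdot)$ vanishes near $y=0$. Integrating from $0$ to $h$ then gives
\[
 h^\nu w_y(x,h)=\frac1D\int_{g_h}^h t^{\nu-1}f(x)\,dt+\frac1D\int_0^h t^{\nu-1}\mathcal E(x,t)\,dt .
\]
The first integral equals $\frac{f(x)}{a}\bigl(h^\nu-g_h^\nu\bigr)$, since $D\nu=a$. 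Rearranging gives \eqref{eq:exactflux}; the case $g\ge h$, where $g_h=h$, is covered automatically.

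For \eqref{eq:exactvalue}, I apply the flux identity at every height $t\in(0,h)$ and divide by $t^\nu$:
\[
 w_y(x,t)=\frac{f(x)}a\Bigl(1-\frac{g_t(x)^\nu}{t^\nu}\Bigr)+\frac{t^{-\nu}}D\int_0^t s^{\nu-1}\mathcal E(x,s)\,ds .
\]
I then integrate in $t$ from $0$ to $h$, using $w(x,0+)=0$. For the first term, the integrand vanishes for $t<g$, and the remaining piece is
\[
 \int_g^h\bigl(1-g^\nu t^{-\nu}\bigr)\,dt=h-\Psi_\nu(g,h).
\]
For the second term, Fubini over $0<s<t<h$ produces exactly the kernel $K_\nu(h,s)$. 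The interchange is justified because $\mathcal E(x,\cdot)$ is integrable on $[g(x),h]$ and vanishes below $g(x)$, so all integrals are over a compact range bounded away from $y=0$.

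Finally, both sides of each identity are locally integrable functions of $x$ on compact subintervals of $I$, locally uniformly in $h$. Equality almost everywhere therefore gives the identities in distributions. The main obstacle I anticipate is the a.e.\ justification of the one-dimensional integration across the free boundary. This requires that the vertical slices of the $W^{2,p}$ function are absolutely continuous in $w_y$, that $\mathcal E$ vanishes a.e.\ on the contact set (the second derivatives of a $W^{2,p}$ function vanish a.e.\ on its zero set), and that the lower endpoint contributes nothing. The last point rests on $g(x)>0$ for $x<x_0$, which follows from strict monotonicity of $b$ (Proposition~\ref{prop:strict}).
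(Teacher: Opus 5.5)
Your proposal is correct and follows the paper's proof essentially step for step. Both arguments use the a.e.\ equation $Dy\,w_{yy}+a\,w_y=f\chi_{\{y>g\}}+\mathcal E$, obtained from $W^{2,p}$ regularity and the null graph. Both then apply the integrating factor $y^{\nu-1}$, integrate vertically from the contact interval (using $g(x)>0$), and integrate a second time with Fubini to produce $K_\nu$ and $\Psi_\nu$.

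The differences are minor. You get the distributional version from a.e.\ equality of $L^1_{\loc}$ functions rather than by integrating against a test function. Also, the $W^{2,p}$ estimates give only $C^{1,\alpha}$, not $C^{1,1}$ (the paper proves $C^{1,1}$ separately in Proposition~\ref{prop:positiveregularity}); $C^1$ is all this argument needs.
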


\begin{proof}
Fix $J\Subset I$. The minimum of $g$ on $J$ is positive. Below a
positive height smaller than this minimum, $w$ and its derivatives
vanish identically. Above that height, the local $W^{2,p}$ regularity
from Section~\ref{sec:prelim} applies across the contact set. In
particular, $w$ is $C^1$ there and its gradient vanishes at contact
points. The graph $y=g(x)$ has planar measure zero. Consequently,
\[
 Dyw_{yy}+aw_y=f(x)\chi_{\{y>g(x)\}}+\mathcal E
\]
holds almost everywhere on $J\times(0,y_0)$.
Sobolev slicing allows vertical integration for almost every $x\in J$.
Since $a=D\nu$, the left side multiplied by $y^{\nu-1}$ is
$D(y^\nu w_y)_y$. Integration from a height in the contact region
to $h$ gives \eqref{eq:exactflux}. If $h\le g(x)$, all terms except
the displayed multiples of $h^\nu$ vanish and the same formula holds.

Integrating once more, using $w=0$ in the lower contact interval,
and interchanging the integrals gives the kernel in
\eqref{eq:verticalkernel}. For the constant source above $g$, the
result is
\[
 \frac{f(x)}a\int_g^h\left[1-\Bigl(\frac gt\Bigr)^\nu\right] dt
 =\frac{f(x)}a\bigl[h-\Psi_\nu(g,h)\bigr].
\]
This proves \eqref{eq:exactvalue}. The same calculation integrated
against a smooth test function proves the distributional assertions.
An exhaustion of $I$ by compact subintervals completes the proof.
\end{proof}

The deficit in \eqref{eq:exactvalue} has an explicit form:
\begin{equation}\label{eq:psiexplicit}
 \Psi_\nu(g,h)=
 \begin{cases}
 \displaystyle\frac{g^\nu h^{1-\nu}-\nu g}{1-\nu},&\nu\ne1,\\[4pt]
 g\Bigl(1+\log\frac hg\Bigr),&\nu=1.
 \end{cases}
\end{equation}
In particular, when $g/h\to0$,
\begin{equation}\label{eq:psiasymptotic}
 \Psi_\nu(g,h)\sim
 \begin{cases}
 \frac{g^\nu h^{1-\nu}}{1-\nu},&0<\nu<1,\\
 g\log\frac hg,&\nu=1,\\
 \frac{\nu g}{\nu-1},&\nu>1.
 \end{cases}
\end{equation}
These are asymptotics of the explicit kernel deficit, not yet of the
exercise boundary.

We next estimate the remainder after averaging in log-stock. The
following estimate uses values of $w$ rather than bounds for its
second derivatives near zero variance. For a test function
$\varphi\in C_c^\infty((0,\infty))$, put
\[
 \varphi_\delta(x)=\delta^{-1}
             \varphi\Bigl(\frac{x_0-x}\delta\Bigr).
\]
For small $\delta$, its support lies in $I$.

\begin{proposition}\label{prop:averagedremainder}
For $0<h<\min\{y_0,1\}$ and all sufficiently small $\delta>0$,
\begin{equation}\label{eq:averagedremainder}
 \left|\int_I\varphi_\delta(x)\mathcal R_h(x) dx\right|
 \le C_\varphi\left(h^2+\frac{h^2}{\delta}
                              +\frac{h^3}{\delta^2}\right).
\end{equation}
The constant is independent of $h$, $\delta$, and the minimum contact
height on the support of $\varphi_\delta$.
\end{proposition}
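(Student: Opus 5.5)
Write $\mathcal R_h(x)=\int_0^hK_\nu(h,t)\mathcal E(x,t)\,dt$, insert \eqref{eq:verticalremainder}, and treat the terms of $\mathcal E$ separately after pairing with $\varphi_\delta$. The guiding principle is that every $x$-derivative of $w$ is moved onto $\varphi_\delta$ by integration by parts in $x$. Each such derivative costs a factor $\delta^{-1}$, since $\|\varphi_\delta^{(k)}\|_{L^1}=\delta^{-k}\|\varphi^{(k)}\|_{L^1}$. What remains is controlled by the value bound $0\le w\le Ct$ from \eqref{eq:contactbound}, valid for $x<x_0$ and $t<y_0$. By Proposition~\ref{prop:verticalidentities}, $\mathcal E(x,\cdot)$ vanishes near $t=0$ on the support of $\varphi_\delta$, so all interchanges and integrations by parts are justified in the distributional sense. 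No constant depends on the minimum contact height, because only the bound $w\le Ct$ is used. The kernel bound we need is
\[
 \int_0^hK_\nu(h,t)\,t^k\,dt\le C h^{k+1},
\]
which holds for $k\ge1$ and every $\nu>0$. To see it, use $K_\nu(h,t)\le C t^{\nu-1}\int_t^h s^{-\nu}\,ds$, which is $O(1+\log(h/t))$ when $\nu=1$, $O((t/h)^{\nu-1}\cdot\ldots)\le C$ when $\nu<1$ (up to an $h^{1-\nu}t^{\nu-1}$ factor), and $O(1)$ when $\nu>1$. Then integrate against $t^k$ directly.

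\textbf{Terms without $x$-derivatives.} The term $rw$ contributes at most $\int_0^hK_\nu\,Ct\,dt\le Ch^2$.

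\textbf{Term $-(r-q-t/2)w_x$.} Integrate by parts once in $x$. The bound becomes $C\delta^{-1}\int_0^hK_\nu\,t\,dt\le Ch^2/\delta$.

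\textbf{Term $-\tfrac t2 w_{xx}$.} Integrate by parts twice. The bound becomes $C\delta^{-2}\int_0^hK_\nu\,t^2\,dt\le Ch^3/\delta^2$.

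\textbf{Terms $\kappa t w_y$ and $-\rho\sigma t w_{xy}$.} Here the $y$-derivative must be removed by integrating by parts in $t$ against the weight $tK_\nu(h,t)$. We have $K_\nu(h,h)=0$, and the lower endpoint contributes nothing because $w$ vanishes near $t=0$. This produces $-\int_0^h\partial_t\bigl(tK_\nu(h,t)\bigr)w\,dt$. A direct computation gives
\[
 \partial_t(tK_\nu)=\nu K_\nu-\frac1D ,
\]
since $\partial_t\bigl(t^\nu\int_t^hs^{-\nu}ds\bigr)=\nu t^{\nu-1}\int_t^h s^{-\nu}ds-1$. This is bounded by the same kernel-type quantities. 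The term $\kappa tw_y$ therefore gives $C\int_0^h(K_\nu+1)\,t\,dt\le Ch^2$. The mixed term, after one further $x$-integration by parts onto $\varphi_\delta$, gives $Ch^2/\delta$. Summing all contributions yields \eqref{eq:averagedremainder}.

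\textbf{Main obstacle.} The delicate step is the $t$-integration by parts in the $w_y$ and $w_{xy}$ terms, together with the kernel integrability for $\nu<1$ and $\nu=1$, where $K_\nu$ is singular at $t=0$. I would first verify $\int_0^hK_\nu(h,t)\,t\,dt\le Ch^2$ uniformly in $\nu$-cases, using the explicit primitive of $\int_t^h s^{-\nu}\,ds$. Second, I would check that the boundary term $tK_\nu(h,t)\,w\to0$ as $t\to0$ for every $x$; this is immediate, since $w=0$ near $t=0$ on the support of $\varphi_\delta$. Weak differentiability is needed for the interchanges, and it follows from the $W^{2,p}$ regularity above the positive contact height on compact subintervals, exactly as in Proposition~\ref{prop:verticalidentities}.
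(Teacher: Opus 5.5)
Your proof is correct and follows essentially the paper's argument. The paper also integrates the $w_y$ and $w_{xy}$ terms by parts in $t$, using $K_\nu(h,h)=0$ and the vanishing of $w$ near $t=0$, and writes the result as $\mathcal R_h=-\kappa C_h-\tfrac12\partial_{xx}B_h+\rho\sigma\partial_xC_h-(r-q)\partial_xA_h+\tfrac12\partial_xB_h+rA_h$; it then moves all $x$-derivatives onto $\varphi_\delta$ and uses $0\le w\le Ct$ together with the kernel moments.

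Your case-by-case justification of $\int_0^hK_\nu t^k\,dt\le Ch^{k+1}$ is garbled for $\nu<1$, but Fubini gives it exactly: $\int_0^ht^kK_\nu(h,t)\,dt=h^{k+1}/\bigl(D(k+\nu)(k+1)\bigr)$. Your identity $\partial_t(tK_\nu)=\nu K_\nu-D^{-1}$ is a clean way to obtain the bound $\int_0^ht|\partial_t(tK_\nu)|\,dt\le Ch^2$, which the paper states without proof.
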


\begin{proof}
Suppress the subscript $\nu$ on $K$ and define
\[
 A_h=\int_0^h K(h,t)w(x,t) dt,\quad
 B_h=\int_0^h tK(h,t)w(x,t) dt,\quad
 C_h=\int_0^h\partial_t(tK(h,t))w(x,t) dt.
\]
Integration by parts in $t$ in \eqref{eq:verticalremainder} gives
\begin{equation}\label{eq:remainderweak}
 \mathcal R_h=-\kappa C_h-\tfrac12\partial_{xx}B_h
       +\rho\sigma\partial_xC_h-(r-q)\partial_xA_h
       +\tfrac12\partial_xB_h+rA_h
\end{equation}
in distributions on $I$. There is no upper boundary term because
$K(h,h)=0$, and there is no lower term because $w=0$ near $t=0$
on each compact subinterval of $I$.

The kernel is nonnegative, with
\[
 \partial_tK(h,t)=-D^{-1}h^{1-\nu}t^{\nu-2}.
\]
Its first two moments are
\[
 \int_0^h tK(h,t) dt=\frac{h^2}{2D(\nu+1)},\qquad
 \int_0^h t^2K(h,t) dt=\frac{h^3}{3D(\nu+2)}.
\]
Moreover, $\int_0^h t|\partial_t(tK)| dt\le C h^2$.
The bound $0\le w(x,t)\le Ct$ therefore gives
\[
 |A_h|+|C_h|\le Ch^2,\qquad |B_h|\le Ch^3
\]
uniformly for $x\in I$. Move all $x$ derivatives in
\eqref{eq:remainderweak} onto $\varphi_\delta$. Its first two
derivative norms in $L^1$ have orders $\delta^{-1}$ and
$\delta^{-2}$. This proves \eqref{eq:averagedremainder}, since the term
$h^3/\delta$ is absorbed by $h^2/\delta$.
\end{proof}

The source in the continuation equation can be removed exactly.
This separates a smooth particular solution from the contribution
of the exercise set.

\begin{proposition}\label{prop:particular}
There are entire functions $A,C$ of $y$, with $A(0)=C(0)=0$,
such that
\begin{equation}\label{eq:particular}
 P(x,y)=A(y)+e^xC(y),\qquad \Lop P=f,
\end{equation}
and, locally uniformly in $x$,
\begin{equation}\label{eq:particularexpansion}
 P(x,y)=\frac{f(x)}a y+
 \frac{rK(\kappa+r)-qe^x(\kappa+q-\rho\sigma)}{2a(D+a)}y^2
 +O(y^3).
\end{equation}
For $Z=P-w$, one has, locally on $\{x<\log K,y>0\}$,
\begin{equation}\label{eq:potentialpde}
 \Lop Z=f(x)\chi_{\{w=0\}}
\end{equation}
both almost everywhere and in distributions.
\end{proposition}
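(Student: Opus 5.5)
The plan is to construct $P$ by separating variables, using that the coefficients of $\Lop$ do not depend on $x$ and that $f(x)=rK-qe^x$ is a combination of $1$ and $e^x$. For a function $A(y)$ of $y$ alone, $\Lop A=DyA''+(a-\kappa y)A'-rA$. For $e^xC(y)$ the $x$-derivatives give $\frac y2C+\bigl(r-q-\frac y2\bigr)C$, the mixed term gives $\rho\sigma yC'$, and after cancellation
\[
 \Lop\bigl(e^xC\bigr)=e^x\bigl[DyC''+(a-(\kappa-\rho\sigma)y)C'-qC\bigr].
\]
So $\Lop P=f$ reduces to the two ordinary differential equations
\[
 DyA''+(a-\kappa y)A'-rA=rK,\qquad
 DyC''+(a-(\kappa-\rho\sigma)y)C'-qC=-q,
\]
each with initial condition $0$ at $y=0$. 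Both are Kummer-type equations with a regular singular point at $0$ and indicial exponents $0$ and $1-\nu$. I would not use special-function theory and would instead solve them directly by power series.

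Write $A=\sum_{n\ge1}A_ny^n$. The $y^0$ coefficient gives $aA_1=rK$. For $n\ge1$, the $y^n$ coefficient gives the recurrence
\[
 (n+1)(Dn+a)A_{n+1}=(\kappa n+r)A_n .
\]
The factor $Dn+a$ never vanishes because $a>0$, so the recurrence is solvable for all $n$. The ratio $|A_{n+1}/A_n|$ behaves like $\kappa/(Dn)\to0$, so the series has infinite radius and $A$ is entire. Similarly, with $C=\sum_{n\ge1}C_ny^n$ one gets $aC_1=-q$ and
\[
 (n+1)(Dn+a)C_{n+1}=\bigl((\kappa-\rho\sigma)n+q\bigr)C_n ,
\]
so $C$ is entire as well. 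Reading off the first terms,
\[
 A_1=\frac{rK}a,\quad A_2=\frac{rK(\kappa+r)}{2a(D+a)},\qquad
 C_1=-\frac qa,\quad C_2=-\frac{q(\kappa+q-\rho\sigma)}{2a(D+a)} .
\]
Substituting these into $P=A(y)+e^xC(y)$ gives \eqref{eq:particularexpansion}. The $O(y^3)$ remainder is locally uniform in $x$ because $e^x$ is bounded on compact $x$-sets and the tails of entire series are $O(y^3)$ near $0$.

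For \eqref{eq:potentialpde}, I would use the local $W^{2,p}_{\loc}$ regularity of $w$ on positive-variance domains with $x<\log K$, recalled after Lemma~\ref{lem:pde}. Since $P$ is smooth, $Z=P-w$ lies in $W^{2,p}_{\loc}$, and its distributional derivatives coincide with its a.e. derivatives. Hence it suffices to prove the identity almost everywhere.

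On the continuation region, Lemma~\ref{lem:pde} gives $\Lop w=f$, so $\Lop Z=0$. On the contact set $\{w=0\}$, Sobolev functions have vanishing gradient and Hessian almost everywhere on their level sets, so $\Lop w=0$ a.e. there and $\Lop Z=f$. Together these give $\Lop Z=f\chi_{\{w=0\}}$ a.e., and therefore in distributions.

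I do not expect a serious obstacle. The only points needing care are to check that no indicial resonance blocks the recurrence (it cannot, since $a>0$) and to invoke the a.e. vanishing of second derivatives on the zero set instead of any regularity of the free boundary. Measure zero of the graph $y=g(x)$, used in Proposition~\ref{prop:verticalidentities}, gives the same conclusion near the endpoint.
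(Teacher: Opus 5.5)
Your proposal is correct and follows the paper's proof essentially verbatim: the same two ODEs for $A$ and $C$, the same recurrences with $O(1/n)$ multipliers giving entire series and the stated second coefficients, and the same use of $W^{2,p}_{\loc}$ regularity together with a.e. vanishing of $\nabla w$ and $D^2w$ on $\{w=0\}$. One small imprecision: you argue entireness of $C$ through a limit of ratios $|C_{n+1}/C_n|$, which is undefined if $q=0$ or if some multiplier $(\kappa-\rho\sigma)n+q$ vanishes; the bound $|C_{n+1}|=O(n^{-1})|C_n|$, or termination of the series, covers these cases, as the paper remarks.
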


\begin{proof}
The equations for $A$ and $C$ are
\begin{align*}
 DyA''+(a-\kappa y)A'-rA&=rK,\\
 DyC''+\bigl(a+(\rho\sigma-\kappa)y\bigr)C'-qC&=-q.
\end{align*}
Writing $A=\sum_{j\ge1}A_jy^j$ and
$C=\sum_{j\ge1}C_jy^j$ gives $A_1=rK/a$, $C_1=-q/a$ and
\[
 A_{j+1}=\frac{\kappa j+r}{(j+1)(Dj+a)}A_j,\qquad
 C_{j+1}=\frac{(\kappa-\rho\sigma)j+q}{(j+1)(Dj+a)}C_j.
\]
The multipliers are $O(1/j)$, so both series converge everywhere.
Direct substitution proves \eqref{eq:particular}, including when
$q=0$ or a multiplier vanishes. Their second coefficients give
\eqref{eq:particularexpansion}.
Local $W^{2,p}$ regularity of $w$ implies that there is no boundary
measure in its second derivatives. Its gradient and Hessian vanish
almost everywhere on its zero set, and $\Lop w=f$ on its positive
set. Subtracting proves \eqref{eq:potentialpde}.
\end{proof}

We now pass to the boundary scale. For $\varepsilon>0$, set
\begin{equation}\label{eq:anisotropic}
 \delta_\varepsilon=d(\varepsilon),\qquad
 \eta_\varepsilon=\frac{\varepsilon}{\delta_\varepsilon},\qquad
 U_\varepsilon(X,Y)=
       \frac{w(x_0+\delta_\varepsilon X,\varepsilon Y)}{\varepsilon}.
\end{equation}
Strict monotonicity ensures $\delta_\varepsilon>0$. Whenever the inverse
is defined, the rescaled boundary is
\begin{equation}\label{eq:rescaledinverse}
 g_\varepsilon(X)=\frac{\gamma(-\delta_\varepsilon X)}\varepsilon,
 \qquad X<0,
\end{equation}
and $g_\varepsilon(-1)=1$. Thus the boundary has a fixed reference point
in these coordinates.

An exact change of variables in the continuation equation gives
\begin{align}
 DY U_{\varepsilon,YY}+aU_{\varepsilon,Y}
 &+\eta_\varepsilon\bigl(\rho\sigma YU_{\varepsilon,XY}
                           +(r-q)U_{\varepsilon,X}\bigr)
   +\frac{\eta_\varepsilon^2Y}{2}U_{\varepsilon,XX}\notag\\
 &-\frac{\varepsilon\eta_\varepsilon Y}{2}U_{\varepsilon,X}
  -\kappa\varepsilon YU_{\varepsilon,Y}-r\varepsilon U_\varepsilon
 =rK-qB e^{\delta_\varepsilon X}.
 \label{eq:anisoexact}
\end{align}
Theorem~\ref{thm:separation} gives $\eta_\varepsilon\to0$. The
vertical integral identities provide the compactness argument below.
The upper bound $U_\varepsilon\le CY$ holds on $X\le0$ locally for small
$\varepsilon$. On $X>0$, the additional term in the bound is
$B(e^{\delta_\varepsilon X}-1)/\varepsilon$, which need not be bounded.
Moreover, the tangential ellipticity in \eqref{eq:anisoexact} degenerates.
The compactness argument of Proposition~\ref{prop:compact} therefore
does not apply to this family with uniform constants.

The averaged remainder estimate and Theorem~\ref{thm:separation}
now give compactness of the value and the inverse boundary. We do not
assume boundary ratio estimates or convergence of derivative traces.

\begin{theorem}\label{thm:anisocompact}
Under \eqref{eq:parameters}, let $\varepsilon_n\searrow0$, $\delta_n=d(\varepsilon_n)$, and define
\[
 G_n(t)=\frac{\gamma(\delta_n t)}{\varepsilon_n},\qquad
 U_n(t,Y)=\frac{w(x_0-\delta_n t,\varepsilon_nY)}{\varepsilon_n},
 \qquad t,Y>0.
\]
These functions are defined on every fixed compact set for large $n$.
There is a subsequence and a continuous, nondecreasing function
$G:(0,\infty)\to(0,\infty)$ such that, locally uniformly,
\begin{equation}\label{eq:anisoconvergence}
 G_n\longrightarrow G,\qquad
 U_n(t,Y)\longrightarrow
 \frac{f_0}{a}\int_{\min\{G(t),Y\}}^Y
                 \left[1-\left(\frac{G(t)}z\right)^\nu\right] dz.
\end{equation}
An integral with identical endpoints in this formula is zero.
The limit satisfies $G(1)=1$. If $m=\min\{\nu,1\}$, then $G^m$ is
concave and
\begin{equation}\label{eq:limitratios}
 \min\{1,t\}\le G(t)^m\le\max\{1,t\},\qquad t>0.
\end{equation}
In particular, the limit value and its $Y$ derivative vanish at
$Y=G(t)$, and the rescaled contact sets converge on compact subsets.
\end{theorem}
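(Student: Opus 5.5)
Compactness of $G_n$ comes first, from the vertical identity \eqref{eq:exactvalue} averaged against the test functions $\varphi_\delta$ with $\delta=\delta_n$ and $h=\varepsilon_nY$. Divide by $\varepsilon_n$. Proposition~\ref{prop:averagedremainder} bounds the averaged remainder by
\[
C\bigl(\varepsilon_n Y^2+\varepsilon_nY^2/\delta_n+\varepsilon_n^2Y^3/\delta_n^2\bigr)
=C\bigl(\varepsilon_nY^2+\eta_nY^2+\eta_n^2\varepsilon_n^{-1}\cdots\bigr).
\]
Bookkeeping gives a bound of order $Y^2\eta_n+Y^3\eta_n^2$ after the division. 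Since $\eta_n\to0$ by Theorem~\ref{thm:separation}, this remainder tends to zero. The identity therefore becomes
\[
\int\varphi(t)\,U_n(t,Y)\,dt=\frac{f_0}{a}\int\varphi(t)\bigl[Y-\Psi_\nu(\min\{G_n(t),Y\},Y)\bigr]dt+o(1),
\]
using the homogeneity $\Psi_\nu(\varepsilon g,\varepsilon h)=\varepsilon\Psi_\nu(g,h)$ and $f(x_0-\delta_nt)\to f_0$.

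Monotonicity of $\gamma$ makes each $G_n$ nondecreasing with $G_n(1)=1$. The main task is locally uniform bounds and equicontinuity. I would get both from convexity. Each vertical slice $t\mapsto U_n(t,Y)$ is convex in the original stock variable up to the harmless factor $e^{x}$, and nonincreasing in $t$. The map $g\mapsto Y-\Psi_\nu(g,Y)$ is strictly decreasing on $(0,Y]$. So the averaged identity, taken at large $Y$ (up to $h<y_0$, i.e.\ $Y<y_0/\varepsilon_n$), recovers local averages of $\Psi_\nu(G_n,Y)$ from averages of a convex function.

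Then use the asymptotics \eqref{eq:psiasymptotic} as $Y\to\infty$. The dominant $t$-dependent part of $\Psi_\nu(G_n,Y)$ is $G_n^\nu Y^{1-\nu}/(1-\nu)$ for $\nu<1$, and $\nu G_n/(\nu-1)$ for $\nu>1$; the case $\nu=1$ is handled by a log-corrected variant. Since $U_n(\cdot,Y)$ is, up to $O(\delta_n)$ errors, a convex function of $-t$, averages of $G_n^m$ against nonnegative second differences of test functions have a sign. In the limit this gives concavity of $G^m$ in the distributional sense. I expect this step to be the main obstacle: making the second-difference sign survive the $o(1)$ errors uniformly, and isolating the $G^m$ term from the $Y$-dependent normalization. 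My first attempt would divide the identity by $Y^{1-\nu}$ (resp.\ by $1$) and send $Y\to\infty$ along a diagonal.

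For the a priori bounds before passing to limits, monotonicity gives $G_n\le 1$ on $t\le1$ and $G_n\ge1$ on $t\ge1$. An approximate concavity of $G_n^m$ then gives the ratio bounds \eqref{eq:limitratios}, namely chord estimates through $0$ and $1$. Concavity combined with positivity yields local Lipschitz bounds away from $t=0$. A Helly/Arzelà–Ascoli selection then gives $G_n\to G$ locally uniformly. At the endpoint, $\min\{1,t\}\le G^m$ keeps $G$ positive.

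Finally I would identify the limit of $U_n$. Horizontal monotonicity, together with the uniform Lipschitz bound in $Y$ from $0\le w_y$ and $w\le Cy$, gives local uniform compactness of $U_n$. Vertically, \eqref{eq:exactvalue} holds for almost every $t$ with the averaged remainder vanishing. Monotonicity in $t$ upgrades the averaged convergence to pointwise convergence. The explicit integral equals $\frac{f_0}{a}[Y-\Psi_\nu(\min\{G,Y\},Y)]$, which gives \eqref{eq:anisoconvergence}. Its value and $Y$-derivative vanish at $Y=G(t)$ by direct differentiation. Convergence of the contact sets follows from local uniform convergence of $G_n$.
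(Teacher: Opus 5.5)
Your setup is correct. After dividing by $\varepsilon_n$, the averaged remainder is $C(\varepsilon_nY^2+\eta_nY^2+\eta_n^2Y^3)\to0$. Your ingredients are also the paper's: convexity of $U_n(\cdot,Y)$, strict monotonicity of $\Psi_\nu$ in its first argument, normalization as $Y\to\infty$, and chords through $0$ and $1$.

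\textbf{The gap.} It lies in the step you flag yourself, and the finite-$n$ route you sketch for it does not work as stated. At finite $n$ the identity holds only after averaging, with errors measured by $\|\varphi'\|_1$ and $\|\varphi''\|_1$. So $G_n$ is not even approximately concave pointwise, and no local Lipschitz bound for $G_n$ follows. Moreover, replacing $\Psi_\nu(\min\{G_n,Y\},Y)$, divided by its normalization $Y^{1-\nu}/(1-\nu)$, $\log Y$ or $\nu/(\nu-1)$, by $G_n^m$ presupposes $G_n\ll Y$ on the relevant interval. That is exactly the a priori bound you want to derive from the concavity, so the argument is circular where the theorem has its content.

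\textbf{The missing idea: pass to the limit first.}
\begin{enumerate}
\item Apply Helly to $\arctan G_n$. This gives a nondecreasing a.e.\ limit $G_*$ with values in $[0,\infty]$, and no bounds are needed.
\item The truncated profile $Y-\Psi_\nu(\min\{G_*,Y\},Y)$ lies in $[0,Y]$. Dominated convergence therefore identifies the distributional limit of $U_n(\cdot,Y)$ for each fixed $Y$.
\item $U_n(\cdot,Y)$ is exactly convex, with no $O(\delta_n)$ error: $R$ is convex and nondecreasing in $s$, and $e^x$ is convex. Also $|\partial_tU_n|\le CY/t$ by stock convexity and \eqref{eq:contactbound}. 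Hence the limit has a continuous convex representative $F_Y$. Consequently $Q_Y=\Psi_\nu(\min\{G_*,Y\},Y)$ agrees a.e.\ with a nonnegative, nondecreasing, concave function.
\item You have $G_n\le1$ for $t<1$ and $G_n\ge1$ for $t>1$. Together with monotonicity of the profile in the contact height and continuity of $F_Y$ at $1$, this gives $Q_Y(1)=\Psi_\nu(\min\{1,Y\},Y)$.
\item Concavity on all of $(0,\infty)$ then yields the exact squeeze $\min\{1,t\}Q_Y(1)\le Q_Y(t)\le\max\{1,t\}Q_Y(1)$ for every fixed $Y$.
\item Only now divide by the normalization and let $Y\to\infty$. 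The squeeze excludes $G_*\in\{0,\infty\}$ a.e. The function $G_*^m$ is then a pointwise limit of concave functions, so it has a continuous concave representative. No diagonal in $Y$ and no finite-$n$ second-difference estimate is needed.
\item Continuity of $G$ and monotonicity of $G_n$ upgrade a.e.\ convergence to locally uniform convergence.
\end{enumerate}

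\textbf{A smaller point.} Your claimed uniform Lipschitz bound in $Y$ does not follow from $w_y\ge0$ and $w\le Cy$; a monotone function with a linear upper bound need not have bounded derivative. It is also unnecessary. Monotonicity of $U_n$ in $Y$ and continuity of the limiting profile give locally uniform convergence in $(t,Y)$ by a finite grid argument.
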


\begin{proof}
By Theorem~\ref{thm:separation}, $\varepsilon_n/\delta_n\to0$.
The functions $G_n$ are increasing and $G_n(1)=1$. Helly selection,
applied first to $\arctan G_n$ and then diagonally, gives an extended
nondecreasing limit $G_*$ at almost every $t>0$. At this stage, zero,
infinite values, and jumps are allowed.

Fix $Y>0$. Apply \eqref{eq:exactvalue} and
\eqref{eq:averagedremainder} with $h=\varepsilon_nY$ and
$\delta=\delta_n$, and divide by $\varepsilon_n$. The averaged
remainder is bounded by
\[
 C_{\varphi,Y}\left(\varepsilon_n+
       \frac{\varepsilon_n}{\delta_n}
       +\frac{\varepsilon_n^2}{\delta_n^2}\right)\longrightarrow0.
\]
Homogeneity of $\Psi_\nu$ in its two arguments and
$f(x_0-\delta_n t)\to f_0$ show that $U_n(\cdot,Y)$ converges in
distributions to
\begin{equation}\label{eq:extendedprofile}
 \frac{f_0}{a}\left[Y-
             \Psi_\nu(\min\{G_*(t),Y\},Y)\right].
\end{equation}
Dominated convergence applies since the profile in brackets lies in
$[0,Y]$.

Each $U_n(\cdot,Y)$ is nonincreasing and convex. Indeed, $R$ is
nondecreasing and convex in stock, so $w$ is convex in logarithmic
stock. On compact $t$ intervals these functions are also uniformly
Lipschitz. To see this, convexity gives
\[
 0\le R_s(s,y)\le\frac{R(B,y)-R(s,y)}{B-s}
                    \le\frac{Cy}{B-s},\qquad s<B.
\]
Consequently $|\partial_tU_n(t,Y)|\le CY/t$.
Thus the distributional limit has a continuous convex representative
$F_Y$, and the convergence to $F_Y$ is locally uniform in $t$.

Let $c_0=f_0/a$ and $Q_Y(t)=Y-F_Y(t)/c_0$. This is nonnegative,
nondecreasing, and concave. For $t<1$ one has $G_n(t)\le1$, and for $t>1$ one has
$G_n(t)\ge1$. The monotonicity of the vertical profile in its contact
height, followed by continuity of $F_Y$ at $1$, therefore gives
\[
 Q_Y(1)=\Psi_\nu(\min\{1,Y\},Y).
\]
Nonnegativity and concavity on the full interval $(0,\infty)$ imply
\begin{equation}\label{eq:concavesqueeze}
 \min\{1,t\}Q_Y(1)\le Q_Y(t)
                     \le\max\{1,t\}Q_Y(1).
\end{equation}
For example, the lower bound for $t<1$ follows by concavity between
$1$ and a point tending to zero. The upper bound for $t>1$ follows
from the same argument with the roles of $1$ and $t$ interchanged.
The other two inequalities follow from monotonicity.

For $Y>1$, put
\[
 c_\nu(Y)=
 \begin{cases}
 \frac{Y^{1-\nu}}{1-\nu},&\nu<1,\\
 \log Y,&\nu=1,\\
 \frac\nu{\nu-1},&\nu>1.
 \end{cases}
\]
For each fixed $0<g<\infty$,
$\Psi_\nu(\min\{g,Y\},Y)/c_\nu(Y)\to g^m$.
For $g=0$ the quotient is zero, and for $g=\infty$ it tends to
infinity. Divide \eqref{eq:concavesqueeze} by $c_\nu(Y)$ and send
$Y\to\infty$. In view of \eqref{eq:extendedprofile}, this excludes
zero and infinite values of $G_*$ almost everywhere.
The nonnegative concave functions $Q_Y/c_\nu(Y)$ are locally
bounded by \eqref{eq:concavesqueeze}. Their almost-everywhere limit
$G_*^m$ has a continuous concave representative $Q$. This also
follows from local compactness of bounded concave functions.
Set $G=Q^{1/m}$. The same inequalities give
\eqref{eq:limitratios}, and $Q(1)=1$.

Monotonicity of $G_n$ and continuity of $G$ upgrade almost-everywhere
convergence to locally uniform convergence. The formula for $F_Y$
now holds at every $t$. Finally, monotonicity of $U_n$ in $Y$ and
continuity of the limiting profile upgrade convergence for each
fixed $Y$ to local uniform convergence in $(t,Y)$, by a finite grid
argument. Differentiating the explicit limiting profile in $Y$
gives zero derivative at contact. Uniform convergence of $G_n$
proves the contact-set assertion directly. It is not inferred from
smallness of $U_n$ alone.
\end{proof}

\begin{corollary}\label{cor:power}
Set $m=\min\{\nu,1\}$. For every
$0<\alpha<m$, there are $C_\alpha,v_\alpha>0$ such that
\begin{equation}\label{eq:power}
 B-b(v)\le C_\alpha v^\alpha,\qquad 0<v<v_\alpha.
\end{equation}
If $\nu\ge1$, then
\begin{equation}\label{eq:logexponent}
 \lim_{v\searrow0}\frac{\log d(v)}{\log v}=1.
\end{equation}
\end{corollary}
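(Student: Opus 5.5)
The plan is to turn the ratio bounds \eqref{eq:limitratios} of Theorem~\ref{thm:anisocompact} into a doubling inequality for the inverse boundary $\gamma$, and then to iterate it dyadically. The key observation is that $G_n(t)=\gamma(\delta_nt)/\gamma(\delta_n)$, because $\varepsilon_n=\gamma(\delta_n)$ when $\delta_n=d(\varepsilon_n)$. So Theorem~\ref{thm:anisocompact} controls ratios of $\gamma$ at comparable arguments, uniformly along every sequence.

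\emph{Step 1 (ratio bound).} I will show that
\[
 \liminf_{\delta\searrow0}\frac{\gamma(\delta/2)}{\gamma(\delta)}\ge2^{-1/m}.
\]
Choose $\delta_n\searrow0$ realizing the liminf, and put $\varepsilon_n=\gamma(\delta_n)$. Then $\varepsilon_n\searrow0$ by continuity of $\gamma$ at $0$, and $d(\varepsilon_n)=\delta_n$. Theorem~\ref{thm:anisocompact} gives a subsequence with $G_n\to G$ locally uniformly on $(0,\infty)$, so $G_n(1/2)\to G(1/2)$. The lower half of \eqref{eq:limitratios} at $t=1/2$ gives $G(1/2)^m\ge1/2$. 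Hence the liminf is at least $2^{-1/m}$.

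\emph{Step 2 (iteration).} Fix $0<\alpha<m$ and choose $\epsilon'>0$ with $1/m+\epsilon'=1/\alpha$. By Step~1 there is $\delta_1>0$ such that
\[
 \gamma(\delta/2)\ge2^{-1/\alpha}\gamma(\delta)\qquad\text{for }0<\delta\le\delta_1.
\]
Iterating gives $\gamma(2^{-k}\delta_1)\ge2^{-k/\alpha}\gamma(\delta_1)$. For $2^{-k-1}\delta_1<\delta\le2^{-k}\delta_1$, monotonicity of $\gamma$ then yields
\[
 \gamma(\delta)\ge c_\alpha\delta^{1/\alpha},\qquad c_\alpha=2^{-1/\alpha}\gamma(\delta_1)\delta_1^{-1/\alpha}.
\]
Apply this with $\delta=d(v)$, which is legitimate for small $v$ since $d(v)\to0$ by Proposition~\ref{prop:upper}. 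We get $v\ge c_\alpha d(v)^{1/\alpha}$, that is, $d(v)\le C_\alpha v^\alpha$. Finally $B-b(v)=B(1-e^{-d(v)})\le Bd(v)$, which proves \eqref{eq:power}.

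\emph{Step 3 ($\nu\ge1$).} Here $m=1$, so Step~2 holds for every $\alpha<1$. For small $v$ we have $\log v<0$, so $\log d(v)\le\log C_\alpha+\alpha\log v$ gives
\[
 \liminf_{v\searrow0}\frac{\log d(v)}{\log v}\ge\alpha .
\]
Conversely, Proposition~\ref{prop:lower}, or Theorem~\ref{thm:separation}, gives $d(v)\ge c_*v$ for small $v$. Hence $\log d(v)/\log v\le1+\log c_*/\log v\to1$. Letting $\alpha\nearrow1$ gives \eqref{eq:logexponent}.

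\emph{Main obstacle.} The only delicate point is Step~1. The bound \eqref{eq:limitratios} holds only for subsequential limits, and it must be converted into a bound that is uniform in $\delta$. This is handled by choosing the sequence along which the liminf is attained, noting that $G_n$ is evaluated exactly at $t=1/2$, and using that locally uniform convergence gives pointwise convergence there. The rest is dyadic bookkeeping.
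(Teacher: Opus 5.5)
Your proof is correct and follows essentially the paper's argument: represent an arbitrary $\delta_n\searrow0$ as $d(\varepsilon_n)$ with $\varepsilon_n=\gamma(\delta_n)$, apply Theorem~\ref{thm:anisocompact} to obtain an asymptotic doubling bound for $\gamma$, iterate dyadically, invert, and combine with $d(v)\ge c_*v$ when $\nu\ge1$. The only difference is that you use the lower half of \eqref{eq:limitratios} at $t=1/2$, while the paper uses the upper half at $t=2$ to get $\limsup_{\delta\searrow0}\gamma(2\delta)/\gamma(\delta)\le2^{1/m}$; the two forms are equivalent.
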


\begin{proof}
Every sequence $\delta_n\searrow0$ can be written as
$\delta_n=d(\varepsilon_n)$ with $\varepsilon_n=\gamma(\delta_n)$.
Theorem~\ref{thm:anisocompact} and \eqref{eq:limitratios} give
\[
 \limsup_{\delta\searrow0}
       \frac{\gamma(2\delta)}{\gamma(\delta)}\le2^{\frac1m}.
\]
Fix $\alpha<m$. For small $\delta$ the ratio is at most
$2^{1/\alpha}$. Iteration on dyadic intervals, followed by
monotonicity between the dyadic points, yields
$\gamma(\delta)\ge c_\alpha\delta^{1/\alpha}$.
Inverting gives $d(v)\le C_\alpha v^\alpha$, and
$B-b(v)=B(1-e^{-d(v)})\le Bd(v)$ proves
\eqref{eq:power}. When $\nu\ge1$, combine this bound
for every $\alpha<1$ with $d(v)\ge c_*v$ from
Proposition~\ref{prop:lower}, and divide their logarithms by $\log v<0$.
\end{proof}

These estimates do not yet identify a slowly varying correction or
an asymptotic coefficient. The axis equation derived below provides
the additional information needed for
Theorem~\ref{thm:sharp}. The logarithmic exponent already suffices
for the following obstruction to a H\"older continuous tangent.

\begin{corollary}\label{cor:noholder}
If $\nu\ge1$, the inverse boundary does not belong to
$C^{1,\tau}([0,\delta_0))$ for any $0<\tau\le1$ and any $\delta_0>0$.
More generally, with $m=\min\{\nu,1\}$, such regularity would
require $\tau\le m^{-1}-1$.
\end{corollary}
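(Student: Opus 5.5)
The plan is to argue by contradiction from the regularity assumption, using $\gamma(0)=0$ and $\gamma'(0)=0$ from Theorem~\ref{thm:endpointregularity}. Suppose $\gamma\in C^{1,\tau}([0,\delta_0))$ for some $0<\tau\le1$. Then $|\gamma'(\delta)|=|\gamma'(\delta)-\gamma'(0)|\le C\delta^\tau$. Integrating from $0$ gives
\[
 \gamma(\delta)\le C\delta^{1+\tau},\qquad 0\le\delta<\delta_0 .
\]
We then pass to the variance variable. Put $\delta=d(v)$, so that $v=\gamma(d(v))\le C\,d(v)^{1+\tau}$ for small $v$, and hence
\[
 d(v)\ge c\,v^{\frac1{1+\tau}} .
\]

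Next we compare this lower bound with the upper bound of Corollary~\ref{cor:power}. That corollary gives, for each $\alpha<m$, $d(v)\le C_\alpha v^\alpha$; this holds because $B-b(v)\sim Bd(v)$, or directly from the proof of that corollary. Combining the two bounds gives $c\,v^{1/(1+\tau)}\le C_\alpha v^\alpha$ for all small $v$. Letting $v\searrow0$, this forces $\alpha\le 1/(1+\tau)$ for every $\alpha<m$, and so $m\le1/(1+\tau)$, that is $\tau\le m^{-1}-1$. This is the general statement.

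For $\nu\ge1$ we have $m=1$, so the condition reads $\tau\le0$, which contradicts $\tau>0$. Alternatively, the exponent limit \eqref{eq:logexponent} gives the contradiction directly: taking logarithms of $d(v)\ge c v^{1/(1+\tau)}$ and dividing by $\log v<0$ yields $\limsup \log d(v)/\log v\le 1/(1+\tau)<1$. The only point needing care is that $d$ and $\gamma$ are mutually inverse near zero, which is guaranteed by continuity and strict monotonicity (Proposition~\ref{prop:strict}). No step here is a serious obstacle; the whole argument is an integration of the Hölder bound against the power bound already established.
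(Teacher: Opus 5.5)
Your proposal is correct and is essentially the paper's argument: integrate the H\"older bound from $\gamma'(0)=0$ to get $\gamma(\delta)\le C\delta^{1+\tau}$, then contradict the power bound of Corollary~\ref{cor:power}. The only difference is cosmetic. The paper stays in the $\gamma$ variable and uses $\gamma(\delta)\ge c_\alpha\delta^{1/\alpha}$ from that corollary's proof, whereas you invert to $d(v)\ge c\,v^{1/(1+\tau)}$ and compare with $d(v)\le C_\alpha v^\alpha$.
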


\begin{proof}
Since $\gamma'(0)=0$, a $\tau$-H\"older derivative at zero would
give $\gamma(\delta)\le C\delta^{1+\tau}$.
The proof of Corollary~\ref{cor:power} gives
$\gamma(\delta)\ge c_\alpha\delta^{1/\alpha}$ for every $\alpha<m$.
If $1+\tau>1/m$, choose $\alpha<m$ with
$1/\alpha<1+\tau$. The two bounds contradict each other as
$\delta\searrow0$.
\end{proof}

\begin{proof}[Proof of Theorem~\ref{thm:main}]
Boundary positivity follows from Lemma~\ref{lem:positiveboundary}.
Propositions~\ref{prop:upper} and \ref{prop:strict} give continuity
and strict monotonicity. Theorem~\ref{thm:separation} gives scale
separation and the endpoint statements. Corollary~\ref{cor:power}
gives the power bounds and the logarithmic exponent.
Proposition~\ref{prop:positiveregularity} and
Theorem~\ref{thm:endpointregularity} give the differentiability
claims, and Corollary~\ref{cor:noholder} gives the obstruction
to a H\"older derivative at zero.
\end{proof}

We now derive an equation on the zero-variance axis. The
representation uses an explicit kernel with its pole on the axis. The outer data and the lower-order terms form a
$C^{1,\lambda}$ remainder for every $0<\lambda<1$.
We retain these terms throughout the argument.

\begin{lemma}\label{lem:axislinear}
There are $C,v_0>0$ such that
\[
 0\le V(e^x,v)-V(e^x,0)\le Cv,
 \qquad x\in\R,\quad 0<v<v_0.
\]
\end{lemma}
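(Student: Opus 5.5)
The lower bound is immediate from variance monotonicity in Lemma~\ref{lem:basic}. For the upper bound I would split the axis into the region below the threshold and the region above it. Since $V(e^x,v)-V(e^x,0)=R(e^x,v)-R(e^x,0)$ by \eqref{eq:excess}, the first region is settled by earlier results. For $e^x\le B$ one has $R(e^x,0)=0$. Proposition~\ref{prop:contactbound} then gives $R(e^x,v)\le Cv$. So the whole difficulty is the half-line $x>x_0$, where both $R(\cdot,v)$ and $R(\cdot,0)$ are positive. There the only bound available so far is \eqref{eq:globalcontactbound}, which loses the additive term $(e^x-B)^+$.

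For $x>x_0$ I would use a comparison argument in the original $(x,y)$ variables with the $x$-independent structure of $\Lop$. Put $\phi(x)=V(e^x,0)$, extended constantly in $y$. Consider the candidate supersolution $\Phi(x,y)=\phi(x)+Cy$ on the set $\{x>x_0-\delta,\ 0<y<v_0\}$. On the lateral side $x=x_0-\delta$, Proposition~\ref{prop:contactbound} gives $V\le\Phi$. On the top side $y=v_0$, $V\le\Phi$ holds for $C$ large, because $V\le K$ and the difference $V(e^x,v_0)-V(e^x,0)$ is bounded. As $x\to\infty$ both sides tend to zero, and a small multiple of $e^{-x}$-type growth handles this in the usual way. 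The bottom $y=0$ needs no boundary condition because of the inward drift $a>0$, with $\Phi\ge V$ there trivially. To exploit this I would phrase the comparison probabilistically: stop at the exit from the box and use \eqref{eq:excessive}. The Fichera sign at $y=0$ is exactly what avoids a boundary datum there. The interior requirement is $\Lop\Phi\le 0$ on the continuation region, together with $\Phi\ge(K-e^x)^+$. The latter holds since $\phi\ge(K-e^x)^+$. The former reads
\[
 \frac y2(\phi''-\phi')+(r-q)\phi'-r\phi+C(a-\kappa y)-rCy\le 0 ,
\]
which has the wrong sign as written. I would therefore replace $Cy$ by $C\,m(y)$-type corrections, or more simply use $\Phi=\phi+C y-C'\,\theta(y)$ with a bounded $\theta$ solving $\mathcal A\theta=-1$ near $0$ as in Lemma~\ref{lem:exit}. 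The point is that the constant term $Ca$ must be beaten by a negative quantity of order one. That quantity has to come from the zero-variance inequality $(r-q)\phi'-r\phi\le -\,c_0<0$ on $x>x_0$.

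That zero-variance inequality is the first real step. I would prove it from the dynamic programming inequality \eqref{eq:excessive} started at $(e^x,0)$ over a short time $h$. The computation is the one in the proof of Proposition~\ref{prop:contactbound}, now done with $V$ in place of $R$ and with a Taylor expansion of $\phi$. This requires $\phi\in C^2$ on $x>x_0$, or at least a one-sided second-order expansion. I would get this from the fact that for $x>x_0$ the zero-variance problem is in its continuation set. The additional term $\frac y2\phi''$ is then of order $y$ and can be absorbed by the $-rCy-\kappa Cy$ term once $C$ is large, provided $\phi''$ is bounded on $x>x_0$.

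The main obstacle is precisely this second-derivative control of the axis trace $\phi=V(e^\cdot,0)$ near and to the right of $x_0$. Convexity gives $\phi''\ge0$ only in distributions, and no regularity of the trace at $v=0$ has been proved. I would first try to bypass it by working with the averaged form used in Proposition~\ref{prop:averagedremainder}: test against $\varphi$, move $x$-derivatives onto the test function, and compare only after mollifying $\phi$ in $x$. Mollification commutes with $\Lop$ since the coefficients do not depend on $x$. Convexity makes the mollified $\phi$ a smooth function whose second derivative is controlled by the Lipschitz bound divided by the mollification scale. If that loses too much, the fallback is a uniform Lipschitz bound for $V$ in $v$, obtained from the derivative equation \eqref{eq:derivativeequations} for $w_y$ and a maximum principle on the continuation region with the barrier of \eqref{eq:barrier}.
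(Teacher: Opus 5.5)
Your lower bound and your treatment of $e^x\le B$ through Proposition~\ref{prop:contactbound} are correct. The argument for $x>x_0$, which is the whole content of the lemma, does not close, and the obstruction is structural, not just the missing $C^2$ regularity of $\phi$.

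\emph{Why the barrier cannot work.} If $\psi(y)\ge0$ is $C^2$ up to the axis with $\psi(0)=0$, then $\Lop\psi\to a\psi'(0)\ge0$ as $y\searrow0$. So no correction depending on $y$ alone and vanishing on the axis can supply the negative order-one term. In particular, $\Lop(-C'\theta)=C'+rC'\theta$ has the wrong sign. With the opposite sign, the regular solution of $\mathcal A\theta=-1$ has $\theta'(0)=-1/a$, so $Cy+C'\theta$ has slope $C-C'/a$ and the gain $-C'$ is exactly the loss of slope. The nonnegative non-regular solutions, such as the exit times of Lemma~\ref{lem:exit}, are not $O(y)$ by \eqref{eq:exitlower}.

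Everything therefore rests on $r\phi-(r-q)\phi'\ge c_0>0$, and this fails on $x>x_0$. As $x\to\infty$ the put is deep out of the money, so $\phi$ and $\phi'$ both tend to zero. Truncating the box at some $x=X_1$ does not help, because the lateral datum there is exactly the estimate being proved.

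\emph{The supporting steps.} Your short-time expansion of \eqref{eq:excessive} at $(e^x,0)$ uses only $V\ge\phi$. It yields $(r-q)\phi'-r\phi\le0$, which monotonicity already gives, not a strict uniform bound. Mollifying $\phi$ does not address this. The fallback through \eqref{eq:derivativeequations} points the wrong way: that equation makes $w_y$ a supersolution, so the minimum principle gives lower bounds, not upper bounds.

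\emph{The missing idea.} Apply the dynamic programming inequality at $(x,0)$ to the difference itself, with no equation for the trace; this is what the paper does. Put $U(x,v)=V(e^x,v)$. Coupling log-stock paths with the same variance and Brownian paths shows that $U(\cdot,v)$ is $K$-Lipschitz, because $(K-e^x)^+$ is. Started at $(x,0)$, the bound $\E Y_t\le at$ gives $\E|X_h-x|\le Ch$. Then $U(x,0)\ge\E[e^{-rh}U(X_h,Y_h)]$ and $U\le K$ yield
\[
 \E[U(x,Y_h)-U(x,0)]\le Ch .
\]
The integrand is nonnegative and nondecreasing in $Y_h$. With $h=4v/a$, the Paley--Zygmund bound from Proposition~\ref{prop:contactbound} gives $\PP(Y_h\ge v)\ge p_0$. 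Hence $p_0[U(x,v)-U(x,0)]\le Ch$, with constants independent of $x\in\R$.

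You named this very computation, but used it to look for a differential inequality for $\phi$ instead of bounding the difference directly. Used directly, it needs no split at $x_0$, no barrier, and no regularity of the axis trace.
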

\begin{proof}
Put $U(x,v)=V(e^x,v)$. The function $(K-e^x)^+$ is globally
$K$-Lipschitz. Coupling log-stock processes with the same variance
and Brownian paths, and then taking the supremum over stopping times,
shows that $U(\cdot,v)$ has the same Lipschitz constant.
Start the process at $(x,0)$. Since $\E Y_t\le at$,
\[
 \E|X_h-x|\le |r-q|h+\frac12\int_0^h\E Y_t dt
                 +\left(\int_0^h\E Y_t dt\right)^{\frac12}\le Ch.
\]
The dynamic programming inequality and $U\le K$ imply
\[
 \E[U(x,Y_h)-U(x,0)]\le Ch.
\]
The integrand is nonnegative and increasing in $Y_h$. For $h=4v/a$,
the first two CIR moments and the Paley--Zygmund inequality give
$\PP(Y_h\ge v)\ge p_0>0$ for small $v$. Thus
$p_0[U(x,v)-U(x,0)]\le Ch$. All constants are independent of $x$.
\end{proof}

For the remainder of the paper set
\begin{equation}\label{eq:axisconstants}
 s_\rho=\sqrt{1-\rho^2},\quad \Lambda=\frac\sigma{s_\rho},\quad
 c=\frac{2(r-q)-\rho\nu\sigma}{\sigma s_\rho},\quad
 E=e^{-\pi c},\quad b_\rho=\frac\rho{s_\rho}.
\end{equation}
The coordinate $\xi=\Lambda(x-x_0)-b_\rho y$ removes the mixed
second derivative. Direct substitution gives
\[
 \frac{\Lop}{Dy}=\partial_{\xi\xi}+\partial_{yy}
 +\frac\nu y\partial_y+\frac c y\partial_\xi
 +\frac{2\rho\kappa-\sigma}{2Ds_\rho}\partial_\xi
 -\frac\kappa D\partial_y-\frac r{Dy}.
\]
Let $Z=P-w$, with $P$ as in Proposition~\ref{prop:particular}.
Lemma~\ref{lem:axislinear} and stock Lipschitz continuity imply, locally,
\begin{equation}\label{eq:axisZbounds}
 |Z(\xi,y)-Z(\xi,0)|\le Cy,\qquad |Z_\xi|\le C.
\end{equation}

\begin{proposition}\label{prop:axisequation}
Define
\[
 K_c(z,y)=(z^2+y^2)^{-\frac\nu2}e^{-c\arg(z+iy)},\qquad y>0,
 \quad h(t)=\gamma\Bigl(\frac t\Lambda\Bigr),\quad m(t)=f\Bigl(x_0-\frac t\Lambda\Bigr).
\]
For some $R>0$, the potential
\begin{equation}\label{eq:axisQ}
 Q(\delta)=\int_0^R m(t)\int_0^{h(t)}y^{\nu-1}
               K_c(\delta-t-b_\rho y,y) dy dt
\end{equation}
satisfies $Q=H$ for small $\delta>0$, where
$H\in C^{1,\lambda}$ for every $0<\lambda<1$ on an interval about
zero. On this whole interval, $Q-H$ is a positive constant times
$w(x_0-\delta/\Lambda,0)$. In particular, $Q-H$ is nonnegative and
convex, and vanishes on the positive half of the interval.
\end{proposition}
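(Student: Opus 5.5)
We will obtain the representation by Green's identity in the sheared coordinates $(\xi,y)$, pairing the equation \eqref{eq:potentialpde} for $Z=P-w$ with an explicit adjoint kernel whose pole sits on the axis. Put $\mathcal M=\partial_{\xi\xi}+\partial_{yy}+\frac\nu y\partial_y+\frac cy\partial_\xi$, the principal part of $\Lop/(Dy)$. The weighted operator $y^\nu\mathcal M$ has the divergence form $\operatorname{div}(y^\nu\nabla\,\cdot)+cy^{\nu-1}\partial_\xi$. Its formal adjoint with respect to $d\xi\,dy$ is therefore $\Phi\mapsto\operatorname{div}(y^\nu\nabla\Phi)-cy^{\nu-1}\Phi_\xi$. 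We first verify by a direct polar-coordinate computation, with $\xi-\delta=\varrho\cos\phi$ and $y=\varrho\sin\phi$, that $\Phi_\delta(\xi,y)=K_c(\delta-\xi,y)$ is annihilated by this adjoint in $\HH$. Both factors are homogeneous: $\varrho^{-\nu}$ reproduces the radial balance of the weight $y^\nu$, and the angular factor $e^{-c\arg}$ cancels the drift term $cy^{\nu-1}\partial_\xi$. This reduces to an ODE in the angle, whose solution is the exponential. We also record the flux of $\Phi_\delta$ through small half-circles about $(\delta,0)$. It is a positive constant $\kappa_0=\int_0^\pi e^{\mp c\phi}\sin^\nu\phi\,d\phi\cdot(\text{normalization})$, independent of the radius because the weighted flux scales like $\varrho^{\nu}\cdot\varrho^{-\nu-1}\cdot\varrho$.

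Next we apply Green's identity on the region $\{y>0,\ |(\xi,y)-(\delta,0)|>r_0\}\cap\mathcal B$, where $\mathcal B$ is a fixed half-disc of radius comparable to $R$. We multiply \eqref{eq:potentialpde}, divided by $D$ and written as $y^{\nu-1}\bigl(y\mathcal M Z+\text{l.o.t.}\bigr)=y^{\nu-1}f\chi_{\{w=0\}}/D$, by $\Phi_\delta$ and integrate. The contact set $\{w=0\}$ near the axis is exactly $\{0<y<g(x)\}$. After changing to $t=\Lambda(x_0-x)$ and accounting for the shear $\xi=\Lambda(x-x_0)-b_\rho y$, its contribution is the double integral \eqref{eq:axisQ}, up to the Jacobian constant $\Lambda^{-1}$ absorbed into normalization. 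We choose $R$ so that the contact strip lies in the domain of $g$.

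The remaining contributions form $H$ or the axis term, and we treat them as follows.
\begin{itemize}
\item The bottom boundary $y\to0$ away from the pole contributes nothing. The flux there is $y^\nu(\Phi Z_y-Z\Phi_y)+c\,y^{\nu-1}\Phi Z$. We handle the weighted normal derivative using \eqref{eq:axisZbounds}, and at $y\to0$ we exploit the $y^{\nu-1}$ term together with the fact that $Z(\xi,0)$ is bounded. Because the $c$-term is singular for $\nu<1$, we will instead first subtract $Z(\xi,0)$. We do this by extending the axis trace constantly in $y$ and absorbing it into the pole contribution.
\item The small half-circle at the pole contributes $\kappa_0 Z(\delta,0)$ in the limit $r_0\to0$, by continuity of $Z$ and \eqref{eq:axisZbounds}. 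Since $P(x,0)=0$, we have $Z(\cdot,0)=-w(\cdot,0)$, which yields the axis term. It equals a positive multiple of $w(x_0-\delta/\Lambda,0)=R(e^{x},0)$. That function vanishes for $x\le x_0$, i.e.\ $\delta>0$, and is convex in $x$ by Lemma \ref{lem:basic}.
\item The outer arc is smooth in $\delta$ for $|\delta|$ small, since the kernel is analytic away from its pole. It therefore lies in $H$.
\item The lower-order terms $\frac{2\rho\kappa-\sigma}{2Ds_\rho}\partial_\xi-\frac\kappa D\partial_y-\frac r{Dy}$ give integrals of the form $\int y^{\nu-1}\Phi_\delta\cdot(\text{bounded derivative of }Z\text{ or }Z/y)$. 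After one integration by parts these move onto the kernel as weight $y^{\nu}\nabla\Phi_\delta\sim\varrho^{-1}$, and for the $r/y$ term we use $|Z|\le C y+|Z(\xi,0)|$.
\end{itemize}

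The main obstacle is showing that these volume terms are $C^{1,\lambda}$ in $\delta$ for every $\lambda<1$, without derivative bounds on $Z$ near the axis. The plan is to write each such term as a convolution in $\xi$ of a kernel homogeneous of degree $-1$ or $0$ in $(\xi-\delta,y)$, integrated against functions controlled only by \eqref{eq:axisZbounds}. We then differentiate once in $\delta$ by moving $\partial_\delta=-\partial_\xi$ onto the data. Since $Z_\xi$ is bounded, this yields a Riesz-type potential of a bounded function with a kernel of degree $-1+\nu$ integrated in $y^{\nu-1}dy$, which is log-Lipschitz and hence $C^{\lambda}$ for all $\lambda<1$. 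The $r/y$ term needs the splitting $Z=Z(\xi,0)+O(y)$. Its trace part combines with the pole term; a constant multiple gets absorbed after noting that $\int y^{\nu-2}\Phi_\delta\,dy\,d\xi$ regularizes into a multiple of the same axis potential. This is where care is needed, and we would first attempt it through the $\nu$-dependent finite-part identity for $\int_0^\infty y^{\nu-1}K_c(z,y)\,dy$.
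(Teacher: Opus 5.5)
Your overall strategy is the paper's: put the equation for $Z=P-w$ in the weighted form $\mathscr PZ=\operatorname{div}(y^\nu\nabla Z)+cy^{\nu-1}Z_\xi$, test it against an explicit $\mathscr P^*$-harmonic kernel with its pole on the axis, read off $-w(\cdot,0)$ from the pole mass and $Q$ from the contact source, and obtain $C^{1,\lambda}$ for the lower-order potentials by moving one tangential derivative onto $Z$ and using $|Z_\xi|\le C$. Two steps fail as written, however.

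\emph{The kernel has the wrong orientation.} One has $K_c(\delta-\xi,y)=e^{-\pi c}K_{-c}(\xi-\delta,y)$, and this function is annihilated by $\mathscr P$, not by $\mathscr P^*$. A direct computation with $z=\delta-\xi$ gives $\mathscr P^*[K_c(\delta-\xi,y)]=2c\,y^{\nu-1}K_cR_z^{-2}(cy-\nu z)$, which is nonzero for $c\ne0$. Its pole at $\xi=\delta$ also gives $Z(\delta,0)=-w(x_0+\delta/\Lambda,0)$, which vanishes on the wrong half-line. The source integral would then have argument $\delta+t+b_\rho y$ instead of the one in \eqref{eq:axisQ}. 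The correct kernel is $K_c(\xi+\delta,y)$, with pole at $\xi=-\delta$. Then $\xi=-t-b_\rho y$ turns the contact pairing into exactly $D^{-1}Q(\delta)$; the Jacobian is one, so no factor $\Lambda^{-1}$ appears. The half-circle flux, including the drift part of the flux vector, is $\frac{\nu^2+c^2}{\nu}\int_0^\pi e^{-c\theta}\sin^\nu\theta\,d\theta$.

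\emph{Your ``main obstacle'' comes from a dropped factor of $y$.} Multiplying $\Lop Z/D$ by $y^{\nu-1}$ gives the lower-order terms $y^\nu\bigl(\frac{2\rho\kappa-\sigma}{2Ds_\rho}Z_\xi-\frac\kappa DZ_y\bigr)-\frac rDy^{\nu-1}Z$, not $y^{\nu-1}$ times a derivative of $Z$ or $Z/y$. Written as $\operatorname{div}(y^\nu\mathbf BZ)+d_0y^{\nu-1}Z$ with $d_0=(r-\nu\kappa)/D$, they pair absolutely with the kernel for every $\nu>0$, because $y^\nu R_z^{-\nu-1}$ and $y^{\nu-1}R_z^{-\nu}$ are locally integrable. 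So no splitting $Z=Z(\xi,0)+O(y)$ and no finite part are needed. Your proposed remedy would be wrong if such a term existed. A regularized $\int y^{\nu-2}K_c\,dy$ is homogeneous of degree $-1$ in $\xi-s$, so it yields a singular axis convolution of the trace, not a multiple of $Z(s,0)$, and the identity $Q-H=\mathrm{const}\cdot w$ would be lost. Likewise, the $c$-term causes no trouble at the axis: its flux vector $cy^{\nu-1}\Phi Z\,e_1$ is tangential to horizontal lines.

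\emph{The real difficulty at the axis is left open in your plan.} The bound \eqref{eq:axisZbounds} gives no pointwise control of $Z_y$. Hence the terms $y^\nu\Phi\,\partial_nZ$ are uncontrolled on your bottom line, on the small half-circles, and at the corners of the outer half-disc. The paper avoids all pointwise flux traces in two steps.
\begin{enumerate}
\item It first proves the weak equation for tests supported in the closed half-plane, using the cutoff $\chi_\varepsilon(y)$. The trace $Z(\xi,0)$ drops out because $\int_0^\infty\partial_y\bigl(y^\nu(\chi_\varepsilon\phi)_y\bigr)dy=0$, and the $O(y)$ remainder costs only $O(\varepsilon^\nu)$.
\item It then replaces the sharp half-circle by a smooth cutoff at the pole. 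That cutoff operator has bounded $L^1$ norm, so $Z-Z(s,0)=O(|\xi-s|+y)$ makes the pole error vanish.
\end{enumerate}
With these corrections, the rest of your regularity argument goes through as you describe, with modulus $\tau(1+|\log\tau|)$ for the derivative.
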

\begin{proof}
We first extend the equation to the axis. Write
\[
 \mathscr PZ=\operatorname{div}(y^\nu\nabla Z)
                         +cy^{\nu-1}Z_\xi.
\]
The equation for $Z$ takes the form
\begin{equation}\label{eq:weightedZ}
 \mathscr PZ=g+\operatorname{div}(y^\nu\mathbf B Z)
                         +d_0y^{\nu-1}Z,
\end{equation}
where
\[
 g=D^{-1}f y^{\nu-1}\chi_{\{w=0\}},\quad
 \mathbf B=\left(-\frac{2\rho\kappa-\sigma}{2Ds_\rho},\frac\kappa D\right),
 \qquad d_0=\frac{r-\nu\kappa}D.
\]
This identity extends to tests supported in the closed half-plane,
with no measure on the axis. To see this, take a smooth cutoff
$\chi_\varepsilon(y)$ equal to zero for $y\le\varepsilon$ and to
one for $y\ge2\varepsilon$, with
$|\chi_\varepsilon^{(j)}|\le C\varepsilon^{-j}$ for $j=1,2$.
Write $Z=Z(\xi,0)+R(\xi,y)$, where $|R|\le Cy$ by
\eqref{eq:axisZbounds}. For a smooth compactly supported test $\phi$,
the integral of $\partial_y(y^\nu\phi_y)$ in $y$ is zero, and the same
holds with $\phi$ replaced by $\chi_\varepsilon\phi$.
Thus the trace $Z(\xi,0)$ cancels from the vertical cutoff error.
The terms containing derivatives of the cutoff are bounded by
\[
 C_\phi\int_\varepsilon^{2\varepsilon}
 y\left(y^\nu\varepsilon^{-2}
       +y^{\nu-1}\varepsilon^{-1}
       +y^\nu\varepsilon^{-1}\right) dy
 \le C_\phi\varepsilon^\nu.
\]
Terms without a cutoff derivative converge by dominated convergence.
The first-order term on the right of \eqref{eq:weightedZ} contributes
at most $C_\phi\varepsilon^{-1}\int_\varepsilon^{2\varepsilon}y^\nu dy
=O(\varepsilon^\nu)$, and the remaining omitted terms have the same
bound. Since $\nu>0$, every cutoff error tends to zero. No pointwise
normal-flux trace is used.

We next test the equation against an explicit adjoint kernel. With
$R_z=(z^2+y^2)^{1/2}$, direct differentiation gives
\[
 (\log K_c)_z=\frac{-\nu z+cy}{R_z^2},\quad
 (\log K_c)_y=\frac{-\nu y-cz}{R_z^2},\quad
 \frac{\Delta K_c}{K_c}=\frac{\nu^2+c^2}{R_z^2}.
\]
Consequently $\mathscr P^*K_c=0$ away from its axis pole. Its flux
through a small semicircle is $-M_{\nu,c}$, where
\begin{equation}\label{eq:polemass}
 M_{\nu,c}=\frac{\nu^2+c^2}{\nu}
                    \int_0^\pi e^{-c\theta}\sin^\nu\theta d\theta>0.
\end{equation}
To compute this flux, integrate
$e^{-c\theta}(\nu\sin^\nu\theta+
c\sin^{\nu-1}\theta\cos\theta)$ and integrate the second term
by parts. The axis flux away from the pole is zero.

Choose a smooth cutoff $\chi$ equal to one near the pole $(s,0)$.
Testing \eqref{eq:weightedZ} against the kernel, first cut off at
distance $\varepsilon$ from the pole, yields
\begin{align*}
 M_{\nu,c}Z(s,0)={}&H_{\rm out}(s)-\int\chi K_c g\\
 &+\int y^\nu\mathbf B Z\cdot\nabla(\chi K_c)
                       -d_0\int\chi K_c y^{\nu-1}Z.
\end{align*}
Here $K_c$ has argument $(\xi-s,y)$, and $H_{\rm out}$ is smooth.
The error at the pole tends to zero: by \eqref{eq:axisZbounds},
$Z-Z(s,0)=O(|\xi-s|+y)$, whereas the cutoff operator has bounded
$L^1$ norm. All displayed integrals are locally integrable.

It remains to prove the regularity of the remainder. The two
lower-order potentials have a tangential derivative with
modulus at most $C\tau(1+|\log\tau|)$. To see this, move one
tangential derivative from the kernel onto $\chi Z$. The resulting
densities are bounded by \eqref{eq:axisZbounds}. The kernels are
bounded respectively by
\[
 y^\nu R_z^{-\nu-1},\qquad y^{\nu-1}R_z^{-\nu}.
\]
Their integrals over $R_z<\varepsilon$ are $O(\varepsilon)$, and
the integrals of their tangential derivatives over
$\varepsilon<R_z<1$ are $O(|\log\varepsilon|)$. Splitting a
translation difference at radius $2\tau$ proves the claimed modulus.
The angular integrals are finite for every $\nu>0$.

For the source integral keep the original horizontal variable:
$t=\Lambda(x_0-x)$ and $\xi=-t-b_\rho y$. The contact set is
exactly $0<y\le h(t)$, $t>0$, locally. No new graph after the shear
is needed. Contributions away from $t=0$ are smooth. The source
term is $D^{-1}Q$, while $Z(-\delta,0)=-w(x_0-\delta/\Lambda,0)$.
Absorbing the other terms into $H$ gives
$Q-H=DM_{\nu,c}w(x_0-\delta/\Lambda,0)$, proving the result.
\end{proof}

From now on write $m_0=m(0)=f_0$.
Theorem~\ref{thm:anisocompact} applies to $h$. In particular,
$h(t)/t\to0$, and every sequence $\delta_n\searrow0$ has a
subsequence for which $h(\delta_n t)/h(\delta_n)$ converges locally
uniformly. If $m_*=\min(\nu,1)$, its limit $G$ has $G(1)=1$ and
$G^{m_*}$ is nondecreasing and concave. For each $\epsilon>0$,
\begin{equation}\label{eq:dyadicinput}
 h(2t)^{m_*}\le2^{1+\epsilon}h(t)^{m_*},\qquad
 h(t)^{m_*}\ge c_\epsilon t^{1+\epsilon}
\end{equation}
for small $t$. These facts will be used before any regular variation
of $h$ has been proved.

\begin{lemma}\label{lem:quantprofile}
Put $\eta=h(\delta)/\delta$ and $G_\delta(t)=h(\delta t)/h(\delta)$.
On every compact subinterval of $(0,\infty)$,
\begin{equation}\label{eq:quantprofile}
 |G_\delta(t)-G_\delta(s)|\le C(|t-s|+\delta+\sqrt\eta).
\end{equation}
\end{lemma}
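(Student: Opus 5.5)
We will make the proof of Theorem~\ref{thm:anisocompact} quantitative at the single scale $\varepsilon=h(\delta)$, whose tangential scale is $\delta$ in the variable $t=\Lambda(x_0-x)$. The input is the exact value identity \eqref{eq:exactvalue} together with the averaged remainder bound \eqref{eq:averagedremainder}. We write these with $h=\varepsilon Y$ for a fixed height $Y>1$, say $Y=Y_0$ chosen large, and divide by $\varepsilon$. In the rescaled variables $U(t,Y)=w(x_0-\delta t/\Lambda,\varepsilon Y)/\varepsilon$ the identity reads
\[
 U(t,Y)=\frac{f}{a}\bigl[Y-\Psi_\nu(\min\{G_\delta(t),Y\},Y)\bigr]+\varepsilon^{-1}\mathcal R_{\varepsilon Y}.
\]
Here $G_\delta(t)=h(\delta t)/h(\delta)$ is the rescaled contact height. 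After averaging against $\varphi_\delta$, the remainder term is $O(\varepsilon+\eta+\eta^2)=O(\eta+\delta)$. The factor $f(x_0-\delta t/\Lambda)=f_0+O(\delta)$ contributes a further $O(\delta)$.

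The next step converts this averaged estimate into a pointwise one. We use the fact from the proof of Theorem~\ref{thm:anisocompact} that $U(\cdot,Y)$ is convex in $t$ and Lipschitz with constant $CY/t$ on compact subintervals. For such functions, the pointwise value differs from a local average over a window of width $\ell$ by at most $C\ell$. Comparing the averaged identity over windows of width $\ell$ then gives
\[
 \bigl|U(t,Y)-\tfrac{f_0}{a}[Y-\Psi_\nu(\min\{G_\delta,Y\},Y)]_{\rm avg}\bigr|\le C(\ell+\ell^{-2}\eta+\delta).
\]
The profile $t\mapsto\Psi_\nu(\min\{G_\delta(t),Y\},Y)$ is monotone in $t$, because $G_\delta$ is nondecreasing, and it is bounded. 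Its window average is therefore sandwiched between its values at the two window endpoints. Taking $\ell\sim\sqrt\eta$ for the first-derivative term is the natural choice. The $\ell^{-2}$ term is the delicate one; to control it we will exploit that $h^3/\delta^2$ carries the extra factor $\eta$, so this term is in fact $O(\eta^2\ell^{-2})$, which is $O(\eta)$ at $\ell=\sqrt\eta$. Combining, we obtain that $\Psi_\nu(\min\{G_\delta(t),Y\},Y)$ agrees, up to an error $C(\sqrt\eta+\delta)$, with a function of $t$ that is Lipschitz on the compact interval.

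Finally we invert $\Psi_\nu$. On the range of $G_\delta$ over a compact interval, \eqref{eq:dyadicinput} gives $G_\delta\asymp1$ uniformly for small $\delta$. With $Y$ fixed larger than this range, $\partial_g\Psi_\nu(g,Y)=\nu g^{\nu-1}\int_g^Y z^{-\nu}dz$ is bounded above and below there. Hence $G_\delta$ inherits the modulus $C(|t-s|+\delta+\sqrt\eta)$, which is \eqref{eq:quantprofile}.

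The main obstacle is the convex-to-pointwise step. It must be made quantitative without any second-derivative control near $y=0$, which means using only convexity, the $1/t$ Lipschitz bound, and monotonicity of $G_\delta$. The step that requires the most care is choosing the window width $\ell$ so that the $\delta^{-2}$ term of \eqref{eq:averagedremainder}, rescaled to $\ell$, stays $O(\sqrt\eta)$.
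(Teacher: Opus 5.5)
Your proposal follows the paper's proof essentially step for step: the paper likewise compares the monotone function $\Phi(G_\delta(t))$, with $\Phi=\Psi_\nu(\cdot,Y)$, to the concave and hence Lipschitz rescaled value through the weak identity from \eqref{eq:exactvalue} and \eqref{eq:remainderweak}. It then mollifies at width $\tau$, uses monotonicity to trap $\Phi(G_\delta(t))$ between the mollifications at $t\pm\tau$, takes $\tau=\sqrt\eta$, and inverts $\Phi$. The only slip is your displayed error $C(\ell+\ell^{-2}\eta+\delta)$, which should read $C(\ell+\delta+\varepsilon+\eta/\ell+\eta^2/\ell^2)$ with a smooth rather than box window, so that $\|\phi''\|_1\sim\ell^{-2}$; your next sentence effectively makes this correction.
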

\begin{proof}
Let $\varepsilon=h(\delta)=\gamma(\delta/\Lambda)$, and choose
a fixed $Y$ larger than the uniform upper bound for $G_\delta$
on a slightly larger interval.
Set $\Phi(g)=\Psi_\nu(g,Y)$ and
\[
 F_\delta(t)=\Phi(G_\delta(t)),\qquad
 Q_\delta(t)=Y-\frac{a}{f_0\varepsilon}
                         w\Bigl(x_0-\frac{\delta t}\Lambda,\varepsilon Y\Bigr).
\]
The second function is uniformly bounded and concave, hence uniformly
Lipschitz on the smaller interval. The first is nondecreasing, and
$\Phi'$ is bounded below by a positive constant on the relevant range.
Equations \eqref{eq:exactvalue} and \eqref{eq:remainderweak}, with
the fixed factors of $\Lambda$ absorbed into the constant, give
\[
 |\langle F_\delta-Q_\delta,\phi\rangle|
 \le C\{(\delta+\varepsilon)\|\phi\|_1+
              \eta\|\phi'\|_1+\eta^2\|\phi''\|_1\}.
\]
Convolve with a positive mollifier of width $\tau$. Monotonicity
places $F_\delta(t)$ between its mollifications at $t-\tau$ and
$t+\tau$. Comparing these to $Q_\delta$ gives an error at most
$C(\tau+\delta+\varepsilon+\eta/\tau+\eta^2/\tau^2)$.
Take $\tau=\sqrt\eta$ and use the Lipschitz inverse of $\Phi$.
\end{proof}

\section{Sharp boundary asymptotics}\label{sec:rates}

We now derive the exact equivalents from the axis equation, starting
with the case $\nu>1$.

\begin{proposition}\label{prop:superrate}
If $\nu>1$, then
\[
d(v)\sim\Lambda^{-1}
 \left(\frac{\nu-1}{J_\nu(c)}\right)^{\frac1{\nu-1}}
 v\Bigl(\log\frac1v\Bigr)^{\frac1{\nu-1}},\qquad
 J_\nu(c)=\int_0^\pi e^{c\theta}\sin^{\nu-2}\theta d\theta.
\]
\end{proposition}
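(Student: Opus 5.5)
We use the axis equation of Proposition~\ref{prop:axisequation}. Since $Q-H$ vanishes for small $\delta>0$, we get $Q(\delta)=H(\delta)$ there, and also $Q-H\ge0$ and convex near zero. The aim is to extract from $Q$ a term that is singular as $\delta\searrow0$ and depends only on the profile $h$. That term must then match the $C^{1,\lambda}$ behaviour of $H$.

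\emph{Step 1: decompose the inner integral.} For $\nu>1$ the inner $y$-integral has a local singularity. Near $t=\delta$ and $y$ small, $y^{\nu-1}K_c(\delta-t-b_\rho y,y)$ behaves like $y^{\nu-1}|\delta-t|^{-\nu}$ when $y\ll|\delta-t|$. Integrating in $y$ up to $h(t)$ then gives $h(t)^\nu/(\nu|\delta-t|^\nu)$. Where $|\delta-t|\lesssim h(t)$, use polar coordinates $\delta-t=R\cos\theta$, $y=R\sin\theta$ (the shear $b_\rho y$ is lower order since $h(t)/t\to0$). In those coordinates the kernel integrates to $R^{-1}\,e^{-c\theta}\sin^{\nu-1}\theta\,d\theta\,dR$. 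So the part of $Q$ coming from the region $|t-\delta|\lesssim h(\delta)$ contributes, after differentiating in $\delta$, a quantity of order $h(\delta)^{\nu-1}\cdot h(\delta)^{-(\nu-1)}$ times an angular integral. Plan: compute $Q'(\delta)$, or rather the second difference $Q(\delta+\tau)+Q(\delta-\tau)-2Q(\delta)$ at scale $\tau\sim h(\delta)$. The near-field contributes
\[
 m_0\,h(\delta)^{\nu-1}\cdot\frac{J_\nu(c)}{h(\delta)^{\nu-1}}\cdot(\text{angular factor})\asymp \text{const}.
\]
The far field $\int_{|t-\delta|>h} m(t)\,h(t)^\nu\,\partial_\delta|\delta-t|^{-\nu}\,dt/\nu$ gives a tail. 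By the rescaled compactness in Theorem~\ref{thm:anisocompact}, $h$ is nearly linear in $t$ with $G\equiv$ limit concave with $G^1$ ($m=1$). So the tail integral $\int_{h(\delta)}^{1} h(\delta)^\nu s^{-\nu-1}\,ds\cdot$ is dominated by the \emph{derivative tail} $\int h'(t)\,h(t)^{\nu-1}(t-\delta)^{1-\nu}$-type contributions. These produce a logarithm $\log(\delta/h(\delta))$.

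\emph{Step 2: balance.} The natural balance equation is
\[
 \frac{J_\nu(c)}{\nu-1}\,\Bigl(\frac{h(\delta)}{\delta}\Bigr)^{\nu-1}\log\frac{\delta}{h(\delta)}\cdot\Lambda^{\ast}\;\approx\;1 .
\]
This comes from the requirement that $Q'$ equal the bounded derivative $H'$, and that $Q''$, with no $\delta^{-1}$ singularity, be compatible with $H\in C^{1,\lambda}$. Solving for $\eta=h(\delta)/\delta$ gives $\eta^{\nu-1}\log(1/\eta)\sim(\nu-1)/J_\nu(c)$ after normalization. Converting to $d(v)$ then uses $h(t)=\gamma(t/\Lambda)$, so $t=\Lambda d(v)$ when $h=v$. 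This yields $d(v)\sim\Lambda^{-1}\{(\nu-1)/J_\nu(c)\}^{1/(\nu-1)}v(\log 1/v)^{1/(\nu-1)}$, since $\log(1/\eta)\sim\log(1/v)$ by \eqref{eq:logexponent}.

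\emph{Step 3: rigor.} First establish the balance with $\limsup/\liminf$ bounds. Use the quantitative profile estimate of Lemma~\ref{lem:quantprofile} and the dyadic bounds \eqref{eq:dyadicinput} to control the near-field error uniformly. Use Potter-type dyadic sums for the tail, showing that the tail is $\log$-divergent with coefficient $(\nu-1)^{-1}J_\nu(c)\eta^{\nu-1}$ up to $o(1)$ relative error. The lower-order terms ($m(t)-m_0$, the shear, and $H$) contribute $O(1)$ against a $\log(1/\eta)\to\infty$ divergence, so they are negligible.

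The main obstacle is Step 3's identification of the logarithmic tail. The tail involves $h$ over the whole range from $h(\delta)$ up to $\delta$, where $h$ is only known to be nearly linear in the weak sense of Theorem~\ref{thm:anisocompact}. I would first try a monotonicity/self-consistency argument: write the balance as an integral inequality for $\eta(\delta)$, and show that $\eta$ is slowly varying before inverting.
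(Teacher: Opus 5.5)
\textbf{The balance in Step 2 is wrong, and the formula you state does not follow from it.} Since $\eta=h(\delta)/\delta\to0$ by Theorem~\ref{thm:separation}, the quantity $\eta^{\nu-1}\log(1/\eta)$ tends to zero. So it cannot be asymptotic to $(\nu-1)/J_\nu(c)$.

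Your appeal to \eqref{eq:logexponent} also goes the wrong way. That limit says $\log(1/h(\delta))\sim\log(1/\delta)$, which forces $\log(1/\eta)=o(\log(1/\delta))$. For the true profile, $\log(1/\eta)\sim(\nu-1)^{-1}\log\log(1/\delta)$. Even if you replace $\log(1/\eta)$ by $\log(1/v)$, inverting your relation gives the constant $\{J_\nu(c)/(\nu-1)\}^{1/(\nu-1)}$, the reciprocal of the one claimed. The correct limit is $\eta(\delta)^{\nu-1}\log(1/\delta)\to J_\nu(c)/(\nu-1)$.

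The mechanism you propose for the logarithm fails as well. The tail $\int_{h(\delta)}^1h(\delta)^\nu s^{-\nu-1}\,ds$ you display is bounded by $1/\nu$. Integrals like $\int_{h(\delta)}^\delta s^{1-\nu}\,ds$ produce a logarithm only at $\nu=2$. Nor is the near field an $O(1)$ derivative. In polar coordinates about the observation point, integrating $R$ up to $h(\delta)/\sin\theta$ turns $\sin^{\nu-1}\theta$ into $\sin^{\nu-2}\theta$. The local potential is then $m_0h(\delta)\int_0^\pi e^{-c\theta}\sin^{\nu-2}\theta\,d\theta+o(h(\delta))=m_0EJ_\nu(c)h(\delta)+o(h(\delta))$. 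It is finite precisely because $\nu>1$, and it is of size $h(\delta)$.

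\textbf{What is missing is the correct pairing of terms.} Compare first-order Taylor remainders of $Q=H$ at the endpoint $\delta=0$, not at the observation point. Since $H\in C^{1,\lambda}$ for every $\lambda<1$, and $h(t)\ge c_\epsilon t^{1+\epsilon}$ by \eqref{eq:dyadicinput}, one has $Q(\delta)-Q(0)-\delta Q'(0)=o(h(\delta))$. Here $Q'(0)=\int_0^Rm(t)D_c(t)\,dt$ with $D_c(t)=E\eta(t)^\nu t^{-1}(1+O(\eta(t)))$. Finiteness of this integral is itself deduced from the existence of $H'(0)$ via Fatou's lemma.

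In this remainder only two terms are of order $h(\delta)$: the near-field potential $m_0EJ_\nu(c)h(\delta)$, and $\delta\int_0^\delta mD_c\,dt$. The other pieces are all $o(h(\delta))$:
\begin{enumerate}
\item the Taylor remainder over $t>2\delta$, bounded by $C\delta^2\int_{2\delta}^Rh(t)^\nu t^{-\nu-2}\,dt$ and then by the dyadic bound;
\item the potential at zero;
\item the piece $(\delta,2\delta)$.
\end{enumerate}
The factor $E$ cancels, giving
\[
 \int_0^\delta\eta(t)^\nu\frac{dt}t\sim J_\nu(c)\,\eta(\delta).
\]
Thus $J_\nu(c)$ enters through the near field, and the logarithm comes from accumulating $\eta^\nu$ over all scales $t<\delta$ between the endpoint and the observation point.

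To finish, put $\ell=\log(1/\delta)$, $e(\ell)=\eta(e^{-\ell})$ and $F(\ell)=\int_\ell^\infty e(s)^\nu\,ds$. Then $F'=-e^\nu$ and $F\sim J_\nu(c)e$, so $(F^{1-\nu})'=(\nu-1)e^\nu F^{-\nu}\to(\nu-1)J_\nu(c)^{-\nu}$. Integrating gives $\eta(\delta)^{\nu-1}\log(1/\delta)\to J_\nu(c)/(\nu-1)$ directly, with no need to prove slow variation first. Only then is \eqref{eq:logexponent} used, to replace $\log(1/t)$ by $\log(1/v)$.

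A second difference or derivative match at the observation point cannot see this cumulative term. Matching $Q'(\delta)$ would also need control of $h'$, which the profile estimates do not give.
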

\begin{proof}
Use \eqref{eq:axisQ} and write $\eta(t)=h(t)/t$.
The derivative at $\delta=0$ of the inner integral is
\begin{equation}\label{eq:Dtail}
 D_c(t)=\frac E t\eta(t)^\nu(1+O(\eta(t))).
\end{equation}
This follows by expanding $K_c(-t-b_\rho y,y)$ for $y/t\to0$.
It is positive for small $t$.

We first justify differentiation under the integral. In the part
$t>2\delta$ the difference quotient is positive near the origin
and comparable to $D_c(t)$. The negative part for $t<2\delta$
is bounded by $C\int_0^{2\delta}\eta(t)^\nu dt=o(h(\delta))$.
Indeed, with $S=\sup_{t<2\delta}\eta(t)\to0$ and $H_*=h(2\delta)$,
\[
 \eta(t)^\nu\le\min(S^\nu,H_*^\nu t^{-\nu}),\qquad
 \int_0^{2\delta}\eta(t)^\nu dt\le C S^{\nu-1}H_*.
\]
The dyadic bound makes $H_*\le Ch(\delta)$. Fatou's lemma and
the finite derivative of $Q=H$ now imply $\int_0^R mD_c<\infty$.
The positive local contribution is $O(h(\delta))=o(\delta)$.
Dominated convergence on $t>2\delta$ then gives
$Q'(0)=\int_0^R m(t)D_c(t) dt$.

At the observation point rescale $t=\delta+h(\delta)u$ and
$y=h(\delta)z$. Compactness of the profiles implies
$h(\delta+h(\delta)u)/h(\delta)\to1$ uniformly for bounded $u$.
For the remaining local tail use monotonicity and
$h(2\delta)\le Ch(\delta)$. Since $\nu>1$, the resulting
$|u|^{-\nu}$ majorant is integrable. Translation by $b_\rho z$
does not change the integral over the whole $u$ axis. Hence
\[
 \int_0^{2\delta}m(t)\int_0^{h(t)}y^{\nu-1}
 K_c(\delta-t-b_\rho y,y) dy dt
 =m_0 C_\nu(c)h(\delta)+o(h(\delta)),
\]
where
$C_\nu(c)=\int_0^\pi e^{-c\theta}\sin^{\nu-2}\theta d\theta
 =EJ_\nu(c)$. This constant follows by putting $u=-z\cot\theta$
and then integrating $0<z<1$.

The local potential at zero is $o(h(\delta))$ by the preceding
bound with $S$. On $t>2\delta$, Taylor's remainder is bounded by
\[
 C\delta^2\int_{2\delta}^R\frac{h(t)^\nu}{t^{\nu+2}} dt
 \le C\frac{h(\delta)^\nu}{\delta^{\nu-1}}+O(\delta^2)
 =o(h(\delta)).
\]
For the inequality, iterate \eqref{eq:dyadicinput} with
$\nu\epsilon<1$. The same lower bound for $h$ shows that the
$C^{1,\lambda}$ Taylor remainder of $H$ is $o(h(\delta))$.
The interval $(\delta,2\delta)$ in the derivative tail contributes
$O(h(\delta)\eta(\delta)^{\nu-1})$. We obtain
\begin{equation}\label{eq:superselection}
 \int_0^\delta\eta(t)^\nu\frac{dt}{t}
                       \sim J_\nu(c)\eta(\delta).
\end{equation}
The source variation $m(t)-m_0=O(t)$ and the error in
\eqref{eq:Dtail} are negligible by the same bounds.

Put $\ell=\log(1/\delta)$, $e(\ell)=\eta(e^{-\ell})$, and
$F(\ell)=\int_\ell^\infty e(s)^\nu ds$. Then
$F\sim J_\nu(c)e$ and $F'=-e^\nu$, so
\[
 (F^{1-\nu})'\longrightarrow\frac{\nu-1}{J_\nu(c)^\nu}.
\]
Integrating and inverting yields
\[
 h(t)\sim t\biggl\{\frac{J_\nu(c)}{(\nu-1)\log\frac1t}\biggr\}^{\frac1{\nu-1}}.
\]
Finally $v=h(t)$ and $d(v)=t/\Lambda$, proving the formula.
\end{proof}

In the critical case $\nu=1$ we use the constants $C_c$ and $k_c$
defined in \eqref{eq:criticalconstants}.

\begin{proposition}\label{prop:criticalrate}
If $\nu=1$, then
\begin{equation}\label{eq:criticalrate}
d(v)\sim\Lambda^{-1}e^{1-k_c-\frac1{C_c}}
                 v\exp\sqrt{\frac2{C_c}\log\frac1v}.
\end{equation}
In particular, $c=0$ gives $C_c=2$, $k_c=\log2$ and coefficient
$\sqrt e/(2\Lambda)$.
\end{proposition}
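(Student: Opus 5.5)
The plan follows the proof of Proposition~\ref{prop:superrate} and starts from the axis identity $Q=H$ of Proposition~\ref{prop:axisequation}, now with $\nu=1$. The difference is that the derivative tail is no longer integrable near the observation point. Write $\eta(t)=h(t)/t$. For $\nu=1$ the inner integral of $K_c$ over $0<y<h(t)$ at $\delta=0$ has derivative $D_c(t)=E\eta(t)t^{-1}(1+O(\eta))$. The local integral near $t=\delta$ now has a $|u|^{-1}$ majorant, which diverges logarithmically. I would therefore not differentiate $Q$ at zero. Instead I would expand $Q(\delta)-Q(0)$ directly. The local region is $|t-\delta|\lesssim\delta$ after the rescaling $t=\delta+h(\delta)u$, $y=h(\delta)z$, and the outer region is handled by the first-order Taylor term in $\delta$.

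The first step is the local contribution. Using the profile compactness of Theorem~\ref{thm:anisocompact} and Lemma~\ref{lem:quantprofile}, I would show that $h(\delta+h(\delta)u)/h(\delta)\to1$ uniformly for $|u|\le\delta/h(\delta)$ up to slowly varying errors. I would then compute the integral of $K_c(-u-b_\rho z,z)$ over $0<z<1$ and $|u|<\delta/h(\delta)$. In polar form, the two ends of the $u$ integral give $(1+E)\log(\delta/h(\delta))$ plus the finite part $A_c$ from \eqref{eq:criticalconstants}. This step is where $C_c$ and $k_c$ enter. The shear $b_\rho z$ only shifts the $u$ window by $O(1)$, which affects the result by $o(1)$ relative to the logarithm.

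The second step is the outer contribution. For $t>2\delta$, the linear Taylor term gives $\delta\int_{2\delta}^R m D_c\,dt$ with the logarithm split off. The side $t<\delta/2$ contributes a term from $h(t)\ge c_\epsilon t^{1+\epsilon}$ that is negligible. Matching $Q=H$ with $H\in C^{1,\lambda}$ then forces the divergent parts to cancel against $H'(0)\delta$. This leaves a relation analogous to \eqref{eq:superselection}:
\[
 E\int_\delta^{R}\eta(t)\frac{dt}{t}-H'(0)\;=\;-\,\eta(\delta)\Bigl[(1+E)\log\frac1{\eta(\delta)}-A_c+O(1)\text{-precise }1\Bigr]+o(\eta(\delta)).
\]
The precise constant is to be tracked, because it produces the exponent $1-k_c-1/C_c$. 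Here $H'(0)$ equals the full convergent integral $E\int_0^R\eta\,dt/t$. I would then pass to $\ell=\log(1/\delta)$ and $e(\ell)=\eta(e^{-\ell})$, which yields the ODE-type relation $E\int_\ell^\infty e=(1+E)e\log(1/e)+\ldots$. Writing $e=\exp(-\phi)$, this becomes $\phi'\phi\to E/(1+E)=1/C_c$ at leading order. Hence $\phi\sim\sqrt{2\ell/C_c}$, and the next order produces the constant $1-k_c-1/C_c$ in $h(t)=t\exp(-\phi)$. Inverting $v=h(t)$ with $d=t/\Lambda$ gives \eqref{eq:criticalrate}.

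The main obstacle is the required precision. The rate involves $\exp\sqrt{\cdot}$, so the coefficient is only determined if $\phi$ is known up to $o(1)$. That in turn needs the relation above with relative error $o(1/\log(1/\eta))$ in the bracket, not merely $o(1)$. I would first try to establish this using Lemma~\ref{lem:quantprofile} with errors of size $\sqrt\eta$, which are polynomially small in $\eta$ and hence beat any logarithm. I would also bound the source variation $m(t)-m_0=O(t)$ and the $C^{1,\lambda}$ remainder of $H$ by $\delta^{1+\lambda}=o(h(\delta))$ via \eqref{eq:dyadicinput}. Finally, I would integrate the resulting first-order differential inequality for $\phi^2$ with a Tauberian-style comparison.
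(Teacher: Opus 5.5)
Your overall architecture is the paper's: the axis identity, $Q(\delta)-Q(0)=\delta H'(0)+o(h(\delta))$ with $H'(0)=\int_0^R mD_c$, a logarithmic local term, and an equation in $\ell=\log(1/\delta)$. Your leading balance $F\sim C_ceH_\ell$ is also correct. The gap is in the next-order term, which is the only source of $k_c$. That term needs every contribution to precision $o(h(\delta))$, and three of your approximations discard terms of exact order $h(\delta)$.

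\begin{itemize}
\item The claim that $h(\delta+h(\delta)u)/h(\delta)\to1$ uniformly for $|u|\le\delta/h(\delta)$ is false. Once the leading relation is known, $h$ is regularly varying of index one. Hence $h(\delta u)/h(\delta)\to u$, and on your window the height runs from $0$ to about $2h(\delta)$.
\item The region $t<\delta/2$ is not negligible. With the linear profile it contributes $h(\delta)\int_0^{1/2}\bigl(\frac u{1-u}-E\bigr)du$, where the $-E$ comes from $Q(0)$.
\item For $\nu=1$ the Taylor remainder on $t>2\delta$ is no longer $o(h)$. The bound $h(\delta)^\nu\delta^{1-\nu}$ from the case $\nu>1$ is now of order $h(\delta)$, and the remainder contributes $Eh(\delta)\log2$.
\end{itemize}

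Your displayed relation carries $-A_c$, which conflicts with your own local computation. The correct outcome is $EF=e\{(1+E)H_\ell+A_c+o(1)\}$, that is, $F=C_ce(H_\ell+k_c+o(1))$. Even at leading order, $h(t)\ge c_\epsilon t^{1+\epsilon}$ gives no upper bound for the $t<\delta/2$ part. That part contains $E\int_0^{\delta/2}\eta(t)\,dt$, which must be shown to be $o(h(\delta)H_\ell)$. This does not follow from monotonicity or the dyadic bounds before regular variation is known.

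The paper works in two stages.
\begin{enumerate}
\item It derives $F\sim C_ceH_\ell$ without assuming regular variation. It drops the nonnegative local potential at zero to get $F=O(eH_\ell)$. It then bounds that potential by $\int_0^\infty e^{-u}e(\ell+u)\,du\le(e^M-1)e(\ell)+e^{-M}F(\ell)$. From $F'/F\to0$ it deduces regular variation of index one.
\item Only then does it compute $\mathcal B_\delta=\int_0^R[F_\delta-F_0-\delta\chi_{\{t>\delta\}}D_c]\,dt$, split at $|t/\delta-1|=\zeta$.
  \begin{itemize}
  \item The central part, where the height is $h(\delta)(1+o(1))$, gives $(1+E)\log(\zeta/\eta)+(1+E)+A_c$.
  \item The outer parts, with the linear profile, give $-(1+E)\log\zeta-(1+E)$.
  \item Hence $\mathcal B_\delta/h(\delta)=(1+E)H_\ell+A_c+o(1)$.
  \end{itemize}
  Lemma~\ref{lem:quantprofile} is needed only on the transition zone $M\eta<|u-1|<\zeta$.
\end{enumerate}

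For the final integration, you should not differentiate an asymptotic relation to get $\phi\phi'\to1/C_c$. The paper uses $T=\log(1/F)$, for which $T'=e/F$ holds exactly, and integrates $T'=\{C_c(T+\log T+\log C_c+k_c+o(1))\}^{-1}$. The constant $1-k_c-1/C_c$ appears only after inverting $v=h(t)$; the profile $h$ itself carries the factor $e^{k_c-1}$.
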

\begin{proof}
We first derive the leading balance. Again put $\eta(t)=h(t)/t$, $\ell=\log(1/\delta)$ and
$e(\ell)=\eta(e^{-\ell})$, but now set
$F(\ell)=\int_\ell^\infty e(s) ds$ and $H_\ell=\log(1/e(\ell))$.
The Fatou argument used above remains valid. The negative local
part divided by $\delta$ tends to zero because $\eta(t)\to0$.
The positive local part has size $h(\delta)\log(1/\eta(\delta))
=o(\delta)$. Thus the derivative formula and $F<\infty$ hold.

We first establish a leading relation without assuming regular
variation. Compactness and the ratio bounds imply that the local
positive contribution $L_\delta$ satisfies
\[
 L_\delta\sim m_0(1+E)h(\delta)H_\ell.
\]
For completeness, remove $|t/\delta-1|<M\eta$, whose contribution
is $O(h)$, and $|t/\delta-1|>\zeta$, whose local contribution is
also $O(h)$. On the intervening intervals, the heights are between
$h(\delta)(1-o_\zeta(1))$ and $h(\delta)(1+o_\zeta(1))$.
The two logarithmic coefficients are $1$ and $E$. Send
$\delta\searrow0$, then $\zeta\searrow0$.

The local potential $N_\delta$ at zero is nonnegative. It must not
yet be bounded by $O(h)$. The Taylor equation instead gives
$m_0E\delta F(1+o(1))=L_\delta-N_\delta+O(h)$, hence
$F=O(eH_\ell)$. Monotonicity of $h$ gives, for every fixed $M>0$,
\[
 \int_0^\infty e^{-u}e(\ell+u) du
 \le(e^M-1)e(\ell)+e^{-M}F(\ell).
\]
This bounds $N_\delta/(hH_\ell)$ by $C_M/H_\ell+Ce^{-M}$.
It follows that
\begin{equation}\label{eq:criticalleading}
 F\sim C_c eH_\ell,\qquad H_\ell\sim\sqrt{\frac{2\ell}{C_c}}.
\end{equation}
For the second relation take $T=\log(1/F)$. The first gives
$T\sim H_\ell$ and $T'\sim1/(C_cT)$. Moreover $F'/F\to0$,
so $F(\ell+u)/F(\ell)\to1$ for fixed $u$. The same follows for
$e$ from \eqref{eq:criticalleading}. Thus $h$ is regularly varying
with index one. In particular, its ratios obey Potter bounds with
exponents $1\pm\epsilon$.

To determine the coefficient, one more term of the local integral is
necessary. Suppress
$m$ temporarily and put
\[
 F_\delta(t)=\int_0^{h(t)}K_c(\delta-t-b_\rho y,y) dy,
 \qquad D_c(t)=\partial_\delta F_\delta(t)|_{\delta=0}.
\]
Consider the convergent combination
\begin{equation}\label{eq:criticalB}
 \mathcal B_\delta=\int_0^R
 [F_\delta(t)-F_0(t)-\delta\chi_{\{t>\delta\}}D_c(t)] dt.
\end{equation}
Replacing $h(\delta u)/h(\delta)$ by $u$ in this expression has
error $o(h(\delta))$. The following bounds justify this replacement
at the singular point. Away from $u=1$, the normalized integrand is
bounded by $CG_\delta(u)/u$ for $u<1/2$ and
$CG_\delta(u)/u^3$ for $u>2$. Potter bounds make these integrable.
For $M\eta<|u-1|<\zeta$, Lemma~\ref{lem:quantprofile} bounds the
replacement error, divided by $h$, by
\[
 C\int_{M\eta<|u-1|<\zeta}
      \frac{|G_\delta(u)-u|}{|u-1|} du
 \le C\zeta+C(\delta+\sqrt\eta)\log\frac1\eta.
\]
The last term tends to zero by \eqref{eq:dyadicinput}. In the central
interval, put $H=h(\delta)$, $\eta=H/\delta$, and set
$t=\delta+Hz$, $y=Hs$. The two rescaled heights are
\[
 a_\delta(z)=G_\delta(1+\eta z),\qquad
 b_\delta(z)=1+\eta z,\qquad |z|\le M.
\]
They tend uniformly to one, so the interval between them lies in
$[1/2,2]$ for small $\delta$. Homogeneity of $K_c$ when $\nu=1$
expresses the difference of the central $F_\delta$ integrals as
\[
 H\int_{-M}^M\int_{b_\delta(z)}^{a_\delta(z)}
       K_c(-z-b_\rho s,s) ds dz.
\]
Its absolute value is at most
$C_MH\sup_{|z|\le M}|a_\delta(z)-b_\delta(z)|=o(H)$.
On the same interval, the kernels in the subtracted $F_0$ and
$\delta D_c$ terms are bounded by $C_M/\delta$, so their replacement
errors are $O(H^2/\delta)=o(H)$.
These estimates allow
$\delta\searrow0$, followed by $\zeta\searrow0$ and removal
of the integrable outer tails.

For the linear profile the inner integral, with symmetric cutoff $U$,
is
\[
 \int_{-U}^U\int_0^1
 (u^2+z^2)^{-\frac12}e^{-c\arg(-u+iz)} dz du
 =(1+E)\log U+(1+E)+A_c+o(1).
\]
This is the substitution $u=-z\cot\theta$. The shear translates
the $u$ interval by the bounded amount $b_\rho z$ and does not
alter its finite part. The outer integrals for the linear profile are
\[
 \int_0^{1-\zeta}\left(\frac u{1-u}-E\right) du
 +E\int_{1+\zeta}^\infty
       \left(\frac u{u-1}-1-\frac1u\right) du
 =-(1+E)\log\zeta-(1+E)+o(1).
\]
Combining them with $U=\zeta/\eta$ proves
\[
 \frac{\mathcal B_\delta}{h(\delta)}
             =(1+E)H_\ell+A_c+o(1).
\]
Restoring $m$ multiplies this by $m_0$. The local error is
$O(\delta H_\ell)=o(1)$, and the outer errors follow from the
same Taylor and Potter bounds. Also
\[
 D_c(t)=\frac{E\eta(t)}t+O\Bigl(\frac{\eta(t)^2}t\Bigr),\qquad
 \int_\ell^\infty e(s)^2 ds
 \le\sup_{s\ge\ell}e(s)F(\ell)=o(e(\ell)).
\]
The last equality follows from \eqref{eq:criticalleading}.
The Taylor remainder of the axis function is $o(h)$ by
\eqref{eq:dyadicinput}. Therefore \eqref{eq:criticalB} yields
\begin{equation}\label{eq:criticalnext}
 F=C_c e(H_\ell+k_c+o(1)).
\end{equation}

It remains to integrate and invert this relation. We integrate it
without differentiating its error term.
With $T=\log(1/F)$,
\[
 H_\ell=T+\log T+\log C_c+o(1),\qquad
 T'=\{C_c(T+\log T+\log C_c+k_c+o(1))\}^{-1}.
\]
Integrating with $T$ as the independent variable and substituting
back gives
\[
 \ell=C_c\Bigl\{\frac{H_\ell^2}2+(k_c-1)H_\ell+o(H_\ell)\Bigr\},\qquad
 e(\ell)\sim e^{k_c-1}e^{-\sqrt{\frac{2\ell}{C_c}}}.
\]
Finally, $v=h(t)$ implies
\[
 \log\frac1v=\ell+\sqrt{\frac{2\ell}{C_c}}+1-k_c+o(1).
\]
Consequently
\[
 \sqrt{\frac{2\ell}{C_c}}=\sqrt{\frac2{C_c}\log\frac1v}-\frac1{C_c}+o(1).
\]
Since $d(v)=t/\Lambda$, this proves \eqref{eq:criticalrate}.
\end{proof}

We finally turn to the power regime. From here until the end of the
proof of Proposition~\ref{prop:subcriticalrate}, we assume $0<\nu<1$. Put $\alpha=1-\nu$ and
retain $E=e^{-\pi c}$. There is a unique
$\beta\in(\nu,1)$ such that
\begin{equation}\label{eq:beta}
 \sin\pi(\beta-\nu)=E\sin\pi\beta.
\end{equation}
Equivalently,
\[
 \beta=\frac{1+\nu}{2}-\frac1\pi
 \arctan\Bigl(\tanh\frac{\pi c}2\cot\frac{\pi\nu}2\Bigr).
\]
We first identify the boundary ratios and then prove convergence of
their amplitude. The second conclusion does not follow from the first.

\begin{proposition}\label{prop:subcriticalratios}
For every $t>0$,
\begin{equation}\label{eq:subcriticalratios}
 \frac{h(\delta t)}{h(\delta)}\longrightarrow t^{\frac\beta\nu}
 \qquad(\delta\searrow0),
\end{equation}
locally uniformly for $t\in(0,\infty)$.
\end{proposition}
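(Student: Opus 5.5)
The plan is to pass to the blow-up limit of the axis equation in Proposition~\ref{prop:axisequation} along an arbitrary sequence $\delta_n\searrow0$. We then show that every subsequential profile limit $G$ from Theorem~\ref{thm:anisocompact} equals $t^{\beta/\nu}$. Uniqueness of the limit, combined with the compactness already proved, then gives \eqref{eq:subcriticalratios}. Locally uniform convergence follows from monotonicity of $h$ and continuity of the limit.

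\textbf{Normalization and the limiting potential.} Set $H_n=h(\delta_n)$ and $G_n(t)=h(\delta_nt)/H_n\to G$. Since $\nu<1$, $G^\nu$ is concave with $\min\{1,t\}\le G(t)^\nu\le\max\{1,t\}$.

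In \eqref{eq:axisQ}, replace $\delta$ by $\delta_n s$ and $t$ by $\delta_n t$. The inner integral runs over the thin layer $y<h(\delta_nt)=o(\delta_n t)$. Expanding $K_c(\delta_n(s-t)-b_\rho y,y)$ for $y\ll|s-t|\delta_n$ gives $|s-t|^{-\nu}\delta_n^{-\nu}$ times $1$ for $s>t$ and $E$ for $s<t$, since the argument is $0$ or $\pi$. The inner integral is therefore about $\nu^{-1}h(\delta_nt)^\nu\,\delta_n^{-\nu}k_E(s-t)$, where $k_E(u)=|u|^{-\nu}(\chi_{u>0}+E\chi_{u<0})$.

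This should give
\[
 \frac{Q(\delta_n s)-(\text{affine in }s)}{\delta_n^{1-\nu}H_n^\nu}
 \longrightarrow \frac{m_0}{\nu}\,\mathrm{f.p.}\!\int_0^\infty G(t)^\nu k_E(s-t)\,dt ,
\]
where the affine part is subtracted to handle the far tail. The growth $G^\nu\le\max\{1,t\}$ is what makes this limit meaningful. The integral converges at infinity only after subtracting the value and derivative at $s=0$, so I would renormalize by subtracting $Q(0)+\delta_ns\,Q'(0)$. This needs $G^\nu$ to grow at most like $t$, and the marginal growth is handled using concavity.

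On the other side, $Q-H$ vanishes for $s>0$. The remainder $H$ is $C^{1,\lambda}$, so after subtracting its first-order Taylor polynomial it is $O(\delta_n^{1+\lambda})$. This is $o(\delta_n^{1-\nu}H_n^\nu)$ by \eqref{eq:dyadicinput}, since $H_n^\nu\ge c\,\delta_n^{1+\epsilon}$ and $\lambda$ can be taken close to $1$. The limit equation therefore reads
\[
 \mathrm{f.p.}\int_0^\infty \phi(t)\,k_E(s-t)\,dt=\text{affine in }s
 \qquad\text{for } s>0,
\]
where $\phi=G^\nu\ge0$ and $\phi$ vanishes on $(-\infty,0)$. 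Equivalently, a nonsymmetric stable operator of order $1-\nu$ (the Riesz potential with kernel $k_E$) annihilates $\phi$ on the half-line modulo affine functions.

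\textbf{Classification.} For the classification I would invoke \cite{DRSV}. A nonnegative solution on the half-line, with the growth bound $\phi(t)\le C(1+t)$ and the required integrability, must be a multiple of $t_+^{\beta'}$. Here $\beta'$ is the unique admissible homogeneity exponent in the relevant range, given by the Mellin symbol. A direct check for $\phi=t_+^{p}$ gives the condition $\sin\pi(p+1-\nu)\cdot(\dots)$; matching the constants from the $u>0$ and $u<0$ sides gives $\sin\pi(\beta-\nu)=E\sin\pi\beta$ with $p=\beta$. So $G^\nu=t^{\beta}$ after the normalization $G(1)=1$, that is, $G=t^{\beta/\nu}$.

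The bounds on $G^\nu$ from Theorem~\ref{thm:anisocompact} place $\beta$ in $(\nu,1)$, which is consistent with \eqref{eq:beta}. They also rule out the other homogeneous solutions, including the growing exponent outside $[0,1]$ and the affine kernel elements. Concavity excludes exponents above $1$, and the bound $\min\{1,t\}\le\phi$ excludes exponents below $\nu$, or handles them through the integrability requirement.

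\textbf{Main obstacle.} The hardest step is justifying the passage to the limit in the renormalized potential uniformly. This involves the near-diagonal region $|t-s|\lesssim\eta$, where the thin-layer expansion of $K_c$ fails, and the far tails.

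For the diagonal I would use Lemma~\ref{lem:quantprofile} and the fact that $|u|^{-\nu}$ is integrable for $\nu<1$, so the central window contributes $O(\eta^{1-\nu})$ relative to the normalization.

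For the tails I would use the dyadic/Potter-type bounds \eqref{eq:dyadicinput} before regular variation is known, together with the concavity growth bound. The aim is a majorant of the form $\max\{1,t\}\,t^{-\nu-2}$ after second-order Taylor subtraction, which allows dominated convergence.
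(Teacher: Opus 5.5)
The gap is in the classification step. Your passage to the limiting thin-layer potential is close to the paper's, although the paper subtracts a secant through two points $0<a<b$ rather than a Taylor polynomial at $0$.

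The equation you obtain says that the \emph{potential} $k_E*\phi$ is affine on the same half-line that supports $\phi$. Convolution with $k_E$ is the inverse of a stable operator of order $\alpha=1-\nu$, not such an operator. So this is not a problem of the form ``$Lu=0$ in a half-line, $u=0$ outside, growth strictly below the order'', which is what \cite[Theorem 4.11]{DRSV} covers. Checking homogeneous solutions through the Mellin symbol gives existence, not uniqueness. In fact the relation \eqref{eq:beta} is invariant under $\beta\mapsto\beta-1$, and $t_+^{\beta-1}$ also makes the renormalized potential constant on $(0,\infty)$. The equation by itself therefore does not single out $t^\beta$.

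There is a second problem with the renormalization. Subtracting $Q(0)+\delta_nsQ'(0)$ presupposes $\int_0^1\phi(t)t^{-\nu-1}dt<\infty$, and the a priori bounds do not give this. Concavity together with $\min\{1,t\}\le\phi\le\max\{1,t\}$ even allows $\phi(0+)>0$.

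The missing idea is to use the complementary half-line. By Proposition~\ref{prop:axisequation}, $Q-H$ is a positive multiple of $w(x_0-\delta/\Lambda,0)$. Hence the renormalized limit $\mathcal U$ is nonnegative and convex on all of $\R$ and zero on $(0,\infty)$. Convexity forces $\mathcal U'\le0$ on $(-\infty,0)$. The only part of $\mathcal U'(z)$ that can diverge as $z\nearrow0$ is positive, so this yields $\int_0^1M(t)t^{-\nu-1}dt<\infty$, $M(0+)=0$ and $\mathcal U'(0\pm)=0$.

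The correct unknown for the Liouville theorem is $W(z)=-\mathcal U'(-z)$, not $\phi$. It is nonnegative, vanishes for $z\le0$, and is annihilated on $(0,\infty)$ by the reflected stable operator; the paper justifies this weak equation with the cutoffs $M\chi(t/R)$.

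The DRSV theorem also needs growth strictly below the order $\alpha$, while concavity only gives the borderline bound $W\lesssim1+z^\alpha$. The paper first proves the strict improvement $M(t)\le Ct^{1-\epsilon_0}$ by contradiction. If $M(2)=2$, then concavity gives $M(t)=t$ on $(0,2)$. The second derivative of the potential then behaves like $z^{-\nu}$ as $z\searrow0$, which contradicts $\mathcal U''=0$ on $(0,\infty)$. Only after this does \cite[Theorem 4.11]{DRSV} give $W=a_*z_+^\chi$. It follows that $M'\propto t^{\chi-\alpha}$ and $M=t^{\nu+\chi}=t^\beta$.
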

\begin{proof}
We first identify the limiting potential. Along any subsequence from Theorem~\ref{thm:anisocompact}, set
\[
 M_\delta(t)=\left(\frac{h(\delta t)}{h(\delta)}\right)^\nu
                \longrightarrow M(t).
\]
The limit is nondecreasing and concave, $M(1)=1$, and
$\min(1,t)\le M(t)\le\max(1,t)$.
Define the homogeneous kernel
\[
 k(z)=\begin{cases}z^{-\nu},&z>0,\\E|z|^{-\nu},&z<0.\end{cases}
\]
Fix $0<a<b$ and subtract from $Q(\delta z)$ its secant through
$z=a,b$. Divide the result by
$N_\delta=\delta^{1-\nu}h(\delta)^\nu$.
After a fixed positive normalization this converges locally uniformly
on the entire real axis to
\begin{equation}\label{eq:renormpotential}
 \mathcal U(z)=\int_0^\infty
 \left[k(z-t)-\frac{b-z}{b-a}k(a-t)
                 -\frac{z-a}{b-a}k(b-t)\right]M(t) dt.
\end{equation}
The integral is understood with the bracket grouped at infinity.
We justify this limit without assuming differentiability at
zero. Locally, the finite-thickness kernel is bounded by
$C|z-t|^{-\nu}$, including the fixed shear. Since $\nu<1$, omitting
$|z-t|<\zeta$ gives an error of at most $C\zeta^{1-\nu}$ on bounded sets.
On the complement, $h(\delta t)/\delta\to0$ and dominated
convergence applies. At infinity the secant removes the constant
and linear terms, leaving a bound $CM_\delta(t)t^{-\nu-2}$.
The dyadic bound \eqref{eq:dyadicinput} gives
$M_\delta(t)\le Ct^{1+\epsilon}$ for $t>1$ up to a fixed original
radius, with $\epsilon<\nu$. Beyond that radius the remainder is
$O(\delta^2)/N_\delta\to0$. Thus the tails are uniformly integrable.
Finally the secant remainder of the smooth axis function, divided
by $N_\delta$, is at most
$C\delta^{\lambda+\nu-1-\epsilon}\to0$ on choosing
$\lambda>1-\nu$ and small $\epsilon$.

Proposition~\ref{prop:axisequation} now implies that $\mathcal U$
is nonnegative and convex on $\R$, and zero on $(0,\infty)$.
In particular, the second derivative of its potential is zero on
this half-line. These assertions are preserved under locally uniform
limits of admissible $M$: the same local and tail bounds apply.

Next we derive growth and integrability properties of the limit.
There is a uniform strict improvement of linear growth. Otherwise
a compact limit would satisfy $M(2)=2$. Concavity and $M(1)=1$
would then give $M(t)=t$ on $(0,2)$. For $z\searrow0$ the second
derivative of the left part of its potential would be
\[
 \frac{d^2}{dz^2}\int_0^z(z-t)^{-\nu}t dt=z^{-\nu}.
\]
The corresponding derivative of the integral from $z$ to $2$, and
of the twice normalized tail from $2$ to infinity, stays bounded.
This contradicts $\mathcal U''=0$ on $(0,\infty)$.
Compactness therefore gives $M(2)\le2-\kappa_0$ uniformly over all
limits. Returning to the original ratios and iterating, there is
$\epsilon_0\in(0,\alpha)$ such that
\begin{equation}\label{eq:strictgrowth}
 h(2t)^\nu\le2^{1-\epsilon_0}h(t)^\nu
 \quad\hbox{for small }t,\qquad
 M(t)\le Ct^{1-\epsilon_0}\quad(t\ge1).
\end{equation}

Convexity gives the required integrability at the other endpoint.
For $z<0$, differentiating \eqref{eq:renormpotential} is legitimate
away from zero. The only part of $\mathcal U'(z)$ that could diverge
as $z\nearrow0$ is
$\nu E\int_0^1 M(t)(t-z)^{-\nu-1} dt$, with positive sign.
But $\mathcal U\ge0$, $\mathcal U(0)=0$, and convexity give
$\mathcal U'(z)\le0$. Monotone convergence proves
\begin{equation}\label{eq:profiledini}
 \int_0^1 M(t)t^{-\nu-1} dt<\infty,
 \qquad M(0+)=0,\qquad t^{-\nu}M(t)\longrightarrow0.
\end{equation}
The last assertion follows by integrating on $(t,2t)$.
The derivative at zero can now be passed under the normalized
integral. The local right-side error is bounded by
$C[z^{-\nu}M(2z)+\int_0^{2z}M(t)t^{-\nu-1} dt]$, which tends
to zero. Thus $\mathcal U'(0-)=\mathcal U'(0+)=0$.
Define $W(z)=-\mathcal U'(-z)$ for $z>0$ and $W=0$ for $z\le0$.
Then
\begin{equation}\label{eq:stableW}
 W(z)=\nu E\int_0^\infty M(t)
       [t^{-\nu-1}-(t+z)^{-\nu-1}] dt\ge0.
\end{equation}
It is continuous and, by \eqref{eq:strictgrowth}--\eqref{eq:profiledini},
$W(z)\le C(1+z^{\alpha-\epsilon_0})$.

Finally, we classify the limit by a stable operator. We use the
half-space Liouville theorem for stable operators
\cite[Theorem 4.11]{DRSV}. With Fourier convention $e^{-i\xi x}$,
\[
 \widehat k(\xi)=\Gamma(\alpha)|\xi|^{-\alpha}
 \left[(1+E)\cos\frac{\pi\alpha}2
       -i(1-E)\operatorname{sgn}\xi\sin\frac{\pi\alpha}2\right].
\]
The inverse multiplier is the symbol of an operator $L$ of order
$\alpha$ with positive stable kernels in both directions. Indeed,
its skew ratio divided by $\tan(\pi\alpha/2)$ has absolute value
$|1-E|/(1+E)<1$. After reflection, the operator $\widetilde L$
annihilates $W$ on $(0,\infty)$ in the weak sense.

We give the cutoff argument for this last assertion to avoid applying
$L$ directly to the faster-growing function $\mathcal U$. Let
$M_R(t)=M(t)\chi(t/R)$ with a smooth compact cutoff, and put
$P_R=k*M_R$ and $W_R(z)=P_R'(0)-P_R'(-z)$. Concavity and
\eqref{eq:profiledini} imply
\[
 \int_0^1t^{-\nu}M'(t) dt<\infty,\qquad
 |M_R'(t)|\le Ct^{-\epsilon_0}\quad(t\ge1).
\]
The Fourier identity gives $\widetilde L W_R=-M_R'(-z)$.
Moving a derivative onto $M_R$ and splitting at $t=|z|/2,2|z|$
gives $|W_R(z)|\le C(1+|z|^{\alpha-\epsilon_0})$ uniformly in $R$.
Near $t=|z|$ use the integrable kernel $|t-|z||^{-\nu}$. In the
tail use $C|z|t^{-\nu-1}$, and near zero use the displayed weighted
integral. Thus $W_R$ converges locally and in the weighted tail
$L^1((1+|z|)^{-1-\alpha} dz)$ to $W$. Passing to the weak
equation proves the assertion, including the absence of any
polynomial ambiguity.

The growth exponent $\alpha-\epsilon_0$ is strictly less than the
order $\alpha=2s$ in the notation of \cite[Theorem 4.11]{DRSV}.
Together with $W=0$ on the negative half-line, the weak equation and
the positive kernels verified above give that theorem's hypotheses.
The theorem gives $W(z)=a_*z_+^\chi$, where $a_*>0$ by
\eqref{eq:stableW} and $M(1)=1$, and
\[
 \chi=\frac\alpha2-\frac1\pi
 \arctan\left(\frac{1-E}{1+E}\tan\frac{\pi\alpha}2\right)
 \in(0,\alpha).
\]
On the negative half-line, applying $\widetilde L$ to this power
gives $-M'(-z)$. Hence $M'(t)$ is a positive constant times
$t^{\chi-\alpha}$. The conditions $M(0)=0$, $M(1)=1$ imply
$M(t)=t^{1+\chi-\alpha}=t^\beta$, with $\beta=\nu+\chi$ as in
\eqref{eq:beta}. Every subsequential limit is therefore the same,
which proves \eqref{eq:subcriticalratios}.
\end{proof}

The convergence of the amplitude uses a Mellin argument, which the
next lemma states separately. We write
$B(z,w)=\Gamma(z)\Gamma(w)/\Gamma(z+w)$ for the Euler beta
function, continued meromorphically.

\begin{lemma}\label{lem:mellinamplitude}
Let $M$ be nonnegative and nondecreasing near zero, regularly varying
there with index $\beta$ from \eqref{eq:beta}, and bounded with compact
support in $[0,R]$. Set
\[
 P_M(x)=\int_0^R k(x-t)M(t) dt,\qquad
 r_M(x)=P_M(x)-P_M(0)-xP_M'(0),\quad x>0.
\]
Suppose that, for some $\kappa_*>0$,
\begin{equation}\label{eq:amplituderemainder}
 |r_M(x)|\le Cx^{\beta+\alpha+\kappa_*}\quad(0<x<x_1).
\end{equation}
Then $M(t)/t^\beta\to A\in(0,\infty)$, where
\begin{equation}\label{eq:amplitudeformula}
 A=-\frac1{\Theta'(\beta)}
             \int_0^\infty r_M(x)x^{-\beta-\alpha-1} dx,
 \qquad
 \Theta(s)=B(s+1,\alpha)+E B(-s-\alpha,\alpha).
\end{equation}
\end{lemma}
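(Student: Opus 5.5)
The plan is a Mellin-transform argument in the variable $\log(1/t)$. The potential $P_M=k*M$ is a Mellin convolution, so its transform factors as the transform of $M$ times an explicit multiplier. That multiplier is $\Theta$, and it vanishes at $\beta$. The remainder hypothesis \eqref{eq:amplituderemainder} then forces a simple pole of the transform of $M$ at $s=\beta$, with computable residue, and a Tauberian theorem converts the pole into the pointwise limit $M(t)/t^\beta\to A$.

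\emph{Step 1 (two transforms and their strips).} Put
\[
 \widetilde M(s)=\int_0^R M(t)t^{-s-1}\,dt,\qquad
 \mathcal F(s)=\int_0^\infty r_M(x)x^{-s-\alpha-1}\,dx .
\]
By regular variation and Potter bounds, $\widetilde M$ is analytic for $\operatorname{Re}s<\beta$ and tends to $+\infty$ as $s\nearrow\beta$. For $\mathcal F$, note that $P_M(x)=O(x^{-\nu})$ at infinity, so $r_M(x)=-xP_M'(0)+O(1)$ there. Together with \eqref{eq:amplituderemainder} near zero, this makes $\mathcal F$ analytic in the strip $1-\alpha=\nu<\operatorname{Re}s<\beta+\kappa_*$. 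This strip contains $s=\beta$, because $\beta>\nu$.

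\emph{Step 2 (factorization).} On the overlap $\nu<\operatorname{Re}s<\beta$, I would prove
\[
 \mathcal F(s)=\Theta(s)\widetilde M(s).
\]
First, for a single power $t^s$ the substitution $t=xu$ gives
\[
 \int_0^\infty k(x-t)t^s\,dt=x^{s+\alpha}\Theta(s),
\]
since the two half-lines produce $B(s+1,\alpha)$ and $E\,B(-s-\alpha,\alpha)$. Fubini then gives the identity, with two points to check. The Taylor subtraction $P_M(0)+xP_M'(0)$ is exactly what makes the $x$-integral converge when $1<s+\alpha+1<2$. It also corresponds to the meromorphic continuation of the beta functions, so no extra terms appear. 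Absolute convergence of the double integral is justified by splitting at $t\sim x$.

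Next, $\Theta(\beta)=0$ is equivalent to \eqref{eq:beta}, by the reflection formula. I would verify $\Theta'(\beta)\ne0$ from the monotone dependence of $\sin\pi(s-\nu)-E\sin\pi s$ on $s\in(\nu,1)$, which is the uniqueness statement behind \eqref{eq:beta}. Consequently $\widetilde M=\mathcal F/\Theta$ continues meromorphically to $\operatorname{Re}s<\beta+\kappa'$ for some $0<\kappa'\le\kappa_*$, with a simple pole at $\beta$. Near the pole,
\[
 \widetilde M(s)=\frac{A}{\beta-s}+(\hbox{analytic}),
 \qquad A=-\frac{\mathcal F(\beta)}{\Theta'(\beta)},
\]
which is \eqref{eq:amplitudeformula}. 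Since $\widetilde M(s)\to+\infty$ as $s\nearrow\beta$, we get $A\ge0$. I would show $A>0$ by arguing as follows. Otherwise $\widetilde M$ would stay bounded as $s\nearrow\beta$. But with $M(t)=t^\beta\ell(t)$ and $\ell$ slowly varying, Potter bounds give $\widetilde M(s)\ge c/(\beta-s)^{1-\epsilon}$, a contradiction.

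\emph{Step 3 (Tauberian conclusion), which I expect to be the main obstacle.} Regular variation plus the pole only yields Abelian, averaged information. For example, Karamata gives
\[
 \frac1y\int_0^y\ell(e^{-u})\,du\to A,
\]
and this does not exclude slow oscillation of $\ell$. I would therefore use the full strength of Step 2 together with monotonicity, through the Wiener--Ikehara--Delange theorem. In the variable $u=\log(1/t)$ I would set
\[
 a(u)=\int_{[e^{-u},R]}t^{-\beta-\eta}\,dM(t)
\]
with a small $\eta>0$. This is nondecreasing in $u$, and its Laplace--Stieltjes transform is a shift of $s\widetilde M(s)$. The theorem then needs $\mathcal F(s)/\Theta(s)-A/(\beta-s)$ to extend continuously to the closed line $\operatorname{Re}s=\beta$. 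That requires checking that $\Theta$ has no zeros on this line other than $s=\beta$. I would try to do this using the explicit beta-function form of $\Theta$, or else by shifting $\eta$ and using the strip of analyticity of $\mathcal F$. The theorem then gives $a(u)\sim\frac{A\beta}{\eta}e^{\eta u}$, and monotonicity of $M$ converts this back to $M(t)\sim At^\beta$.
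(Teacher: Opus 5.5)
The step that fails is the proof that $A>0$. The rest of your route is viable, and its Tauberian step differs from the paper's.

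\textbf{Comparison of routes.} Up to the continuation of $\widetilde M=\mathcal F/\Theta$ across $\beta$, you follow the paper. For the Tauberian step, the paper does the following:
\begin{enumerate}
\item it bounds $1/\Theta$ polynomially on vertical lines by Stirling's formula;
\item it smooths $M$ multiplicatively, so that the inverse Mellin integral converges absolutely;
\item it shifts the contour to the right of $\beta$;
\item it sandwiches $M$ between its smoothings, using monotonicity.
\end{enumerate}
You instead apply Wiener--Ikehara to the nondecreasing function $a(u)$. This is a legitimate alternative and needs less: only continuity of $s\widetilde M(s)-A\beta/(\beta-s)$ up to the line $\operatorname{Re}s=\beta$, with no growth bound on vertical lines. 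In exchange, the paper's route gives a rate $O(x^{\beta+\kappa'})$ for the smoothed profile, and it uses that rate to prove positivity. Two small repairs are needed:
\begin{itemize}
\item Define $a$ using $dM$ only on $[e^{-u},t_0]$, where $M$ is monotone. The lemma does not give bounded variation on $[t_0,R]$, and the remaining piece contributes an entire function.
\item The subtraction of $P_M(0)+xP_M'(0)$ is what is needed when $2<\operatorname{Re}(s+\alpha+1)<3$, not when $1<s+\alpha+1<2$.
\end{itemize}

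\textbf{The gap: positivity of $A$.} Regular variation alone does not give $\widetilde M(s)\to+\infty$ as $s\nearrow\beta$. Nor do Potter bounds give $\widetilde M(s)\ge c(\beta-s)^{\epsilon-1}$. They give only $\ell(t)\ge c_\epsilon t^{\epsilon}$, hence the lower bound $c/(\beta+\epsilon-s)$, which stays bounded at $s=\beta$. For example, $M(t)=t^\beta/\log^2(1/t)$ near zero is nondecreasing and regularly varying with index $\beta$, yet $\widetilde M(\beta)=\int du/u^2<\infty$. Wiener--Ikehara does not rescue this either: with zero residue it yields only $M(t)=o(t^\beta)$, which is compatible with regular variation.

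The inequality $A\ge0$ needs no divergence claim, because it follows from $\widetilde M(s)\ge0$ for real $s<\beta$. For $A>0$ you need a different argument. One option is Landau's theorem. If $A=0$, then $\mathcal F/\Theta$ is analytic at the real point $s=\beta$. Since $M\ge0$, Landau's theorem for Mellin transforms of nonnegative functions would then make $\int_0^R M(t)t^{-s-1}\,dt$ converge for some real $s>\beta$. That contradicts the Potter bound $M(t)\ge ct^{\beta+\epsilon}$. The paper instead reruns its contour shift with $A=0$ and obtains $M(x)=O(x^{\beta+\kappa'})$, again contradicting regular variation.

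\textbf{Zero-free line and simplicity of the zero.} You leave open whether $\Theta$ vanishes elsewhere on $\operatorname{Re}s=\beta$, and shifting $\eta$ cannot affect this. The reflection formula gives
\[
\Theta(s)=B(s+1,\alpha)\Bigl\{1-E\,\frac{\sin\pi s}{\sin\pi(s-\nu)}\Bigr\}.
\]
Hence the zeros in the strip $\nu<\operatorname{Re}s<\nu+1$ solve $\cot\pi s=(\cos\pi\nu-E)/\sin\pi\nu$, a real number. All such solutions are real and differ by integers; this is the paper's argument.

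The same computation gives $\Theta'(\beta)\neq0$. Your monotonicity reason is false in general. The derivative of $\sin\pi(s-\nu)-E\sin\pi s$ at $s=\nu$ is $\pi(1-E\cos\pi\nu)$, which is negative when $E\cos\pi\nu>1$. The correct reason is that this combination is a single sinusoid $R\sin(\pi s-\phi)$ with $R>0$, so its zeros are simple and spaced by one.
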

\begin{proof}
We first compute the Mellin multiplier. Since $\beta>\nu$, $P_M(0)$ and $P_M'(0)$ are finite. For
$\nu<\operatorname{Re}s<\beta$, Fubini and $x=tu$ give
\begin{equation}\label{eq:mellinproduct}
 \mathcal R(s):=\int_0^\infty r_M(x)x^{-s-\alpha-1} dx
 =\Theta(s)\widehat M(s),\qquad
 \widehat M(s)=\int_0^R M(t)t^{-s-1} dt.
\end{equation}
Indeed, the multiplier is the convergent integral
\[
 \int_0^\infty u^{-s-\alpha-1}
              [k(u-1)-E-\nu E u] du.
\]
The bracket is $O(u^2)$ at zero, $O(u)$ at infinity, and has an
integrable singularity at one. The integral is analytic for
$\nu<\operatorname{Re}s<\nu+1$. Evaluating its two parts by the
beta integral, with the displayed subtractions at zero, gives
\[
 \Theta(s)=B(s+1,\alpha)
       \left(1-E\frac{\sin\pi s}{\sin\pi(s-\nu)}\right).
\]
Its only zero in this strip is the simple real zero $\beta$.
To check that there are no complex zeros, solve
$\cot\pi s=(\cos\pi\nu-E)/\sin\pi\nu$. All its solutions
are real and differ by integers. The beta prefactor is nonzero
in the strip.

We next continue the transform and compute the leading residue. At
infinity $r_M(x)=O(1+x)$. Thus
\eqref{eq:amplituderemainder} makes $\mathcal R$ analytic for
$\nu<\operatorname{Re}s<\beta+\kappa_*$. Reduce $\kappa_*$ if
necessary so the upper endpoint is below $\nu+1$.
Equation \eqref{eq:mellinproduct} continues $\widehat M$
meromorphically across $\beta$, with at most a simple pole there.
On closed vertical substrips, $\mathcal R$ is bounded, while
Stirling's formula gives
$|\Theta(s)|\asymp(1+|\operatorname{Im}s|)^{-\alpha}$ for
large imaginary part. The two leading coefficients are proportional
to $e^{-i\pi\alpha/2}+Ee^{i\pi\alpha/2}$ or its conjugate,
which cannot vanish. The continued transform has polynomial growth.

Choose a nonnegative smooth $\psi$ supported in
$(e^{-\epsilon},e^\epsilon)$, with $\int\psi(u) du/u=1$, and
put $M_\psi(x)=\int M(xu)\psi(u) du/u$. Its Mellin multiplier
decays faster than any power on vertical lines. Moving the inverse
Mellin contour to the right of $\beta$ therefore gives, for
$0<\kappa'<\kappa_*$,
\[
 M_\psi(x)=Ax^\beta\int u^\beta\psi(u)\frac{du}u
                                      +O_\psi(x^{\beta+\kappa'}).
\]
The residue has the sign in \eqref{eq:amplitudeformula}, since
the transform of $At^\beta$ near zero has principal part
$A/(\beta-s)$. Monotonicity gives
$M(xe^{-\epsilon})\le M_\psi(x)\le M(xe^\epsilon)$ for small $x$.
Apply the expansion at $xe^{\pm\epsilon}$ and then send
$\epsilon\searrow0$. This proves $M(x)/x^\beta\to A\ge0$.
If $A=0$, one fixed smoothing and the same inequalities give
$M(x)=O(x^{\beta+\kappa'})$, contrary to regular variation of
index $\beta$. Thus $A>0$.
\end{proof}

\begin{proposition}\label{prop:subcriticalrate}
For $0<\nu<1$ there is $A\in(0,\infty)$ such that
\begin{equation}\label{eq:subcriticalrate}
d(v)\sim\Lambda^{-1}A^{-\frac1\beta}v^{\frac\nu\beta}.
\end{equation}
Formula \eqref{eq:amplitudeformula} determines $A$ using
\begin{equation}\label{eq:actualM}
 M(t)=\frac{m(t)}{m_0}h(t)^\nu\quad(0<t<R),\qquad M(t)=0\quad(t\ge R).
\end{equation}
Its value is independent of the sufficiently small cutoff $R$.
\end{proposition}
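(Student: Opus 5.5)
The plan is to apply Lemma~\ref{lem:mellinamplitude} to the function $M$ in \eqref{eq:actualM} and then invert. By Proposition~\ref{prop:subcriticalratios}, $h$ is regularly varying at zero with index $\beta/\nu$. Since $m$ is smooth with $m(0)=m_0>0$, the function $M$ is regularly varying with index $\beta$. Monotonicity of $h$ and the fact that $m$ is nonincreasing in $t$ with a small $O(t)$ variation leave $M$ essentially nondecreasing near zero. If strict monotonicity is needed, I would instead apply the smoothing step of the lemma to $h^\nu$ and absorb the factor $m/m_0=1+O(t)$, which changes $M$ by $O(t^{1+\beta-\epsilon})$. $M$ is bounded and supported in $[0,R]$, so the structural hypotheses of the lemma hold.

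The key step is the remainder bound \eqref{eq:amplituderemainder}. Proposition~\ref{prop:axisequation} says that $Q-H$ vanishes on the positive half of an interval about zero, so $Q=H$ for small $\delta>0$ with $H\in C^{1,\lambda}$. I would first replace the finite-thickness kernel by the thin-layer kernel. For $t$ not close to $\delta$, expanding $K_c(\delta-t-b_\rho y,y)$ for $y/|\delta-t|$ small gives
\[
\int_0^{h(t)}y^{\nu-1}K_c\,dy=\frac{h(t)^\nu}{\nu}k(\delta-t)\bigl(1+O(h(t)/|\delta-t|)\bigr),
\]
with the homogeneous kernel $k$ from Proposition~\ref{prop:subcriticalratios}. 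The constants $E$ and $1$ come from the arguments $\pi$ and $0$. This gives $Q=\nu^{-1}m_0P_M+\mathcal D$. The error $\mathcal D$ collects the correction terms and the contribution of the strip $|t-\delta|\lesssim h(\delta)$. For $r_M$, I need $\mathcal D(\delta)-\mathcal D(0)-\delta\mathcal D'(0)=O(\delta^{\beta+\alpha+\kappa_*})$, together with $H(\delta)-H(0)-\delta H'(0)=O(\delta^{1+\lambda})$. The $H$ term is fine once $\lambda>\beta+\alpha-1+\kappa_*=\beta-\nu+\kappa_*$, which is possible because $\beta-\nu<1$.

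For $\mathcal D$, the relative correction is of order $\eta=h(\delta)/\delta$. Potter bounds give $\eta\le C\delta^{\beta/\nu-1-\epsilon}$, and $\beta/\nu>1$. Scaling the local layer as in the proof of Proposition~\ref{prop:subcriticalratios} bounds the second-order Taylor part of $\mathcal D$ by $C\,N_\delta\,\eta^{\kappa}$ for some $\kappa>0$, where $N_\delta=\delta^{1-\nu}h(\delta)^\nu\approx\delta^{\alpha+\beta}$ up to slowly varying factors. The extra positive power of $\eta$ absorbs these factors and yields the bound with some $\kappa_*>0$. The outer region needs the tail estimates $\int_{2\delta}^R h(t)^\nu t^{-\nu-2}\,dt$, as in the superrate proof. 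The near-diagonal region needs $\nu<1$, so the $|\delta-t|^{-\nu}$ singularity is integrable. Near $t=0$, the second-order Taylor term is controlled through the Dini bound coming from $\beta>\nu$.

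Having verified \eqref{eq:amplituderemainder}, the lemma gives $M(t)/t^\beta\to A\in(0,\infty)$. Hence $h(t)^\nu\sim At^\beta$, that is, $v=h(t)\sim A^{1/\nu}t^{\beta/\nu}$. Inverting the regularly varying function gives $t\sim A^{-1/\beta}v^{\nu/\beta}$, and $d(v)=t/\Lambda$ yields \eqref{eq:subcriticalrate}. For independence of $R$: changing $R$ alters $M$ only away from zero, so the limit $A=\lim M(t)/t^\beta$ is unchanged. This makes formula \eqref{eq:amplitudeformula} consistent for every small $R$.

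The main obstacle is the uniform $o$-power remainder estimate on the thin-layer error $\mathcal D$ near the observation point. The earlier arguments only gave $o(N_\delta)$ through compactness, whereas here I need a genuine power gain. I would first try to get it from the explicit factor $\eta(\delta)$, which is a positive power of $\delta$ by Potter bounds.
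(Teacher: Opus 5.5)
Your proposal is correct and follows the paper's proof: apply Lemma~\ref{lem:mellinamplitude} to \eqref{eq:actualM}, split the normalized potential into $P_M$ plus a thickness correction of relative size $h(t)/|x-t|$, get a power gain over $x^{\beta+\alpha}$ in each region, absorb the $C^{1,\lambda}$ remainder with $\lambda>\beta-\nu$, and invert. Your cutoff argument through $\lim M(t)/t^\beta$ is a valid alternative to the paper's observation that the added piece contributes $\Theta(\beta)$ times an entire transform, hence zero; it needs the lemma's hypothesis for every small $R$, which holds because changing $R$ changes $Q$ by a function smooth near zero.

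Two small corrections. First, $m(t)=rK-qBe^{-t/\Lambda}$ is nondecreasing, not nonincreasing, so $M$ is monotone outright and your fallback is unnecessary. Second, the outer tail must be estimated for the thickness correction itself, by $x^2\int_{2x}^R h(t)^{\nu+1}t^{-\nu-3}\,dt$. The tail $x^2\int_{2x}^R h(t)^\nu t^{-\nu-2}\,dt$ that you cite is only of the critical order $x^{\beta+\alpha}$ and gives no gain.
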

\begin{proof}
Proposition~\ref{prop:subcriticalratios} gives regular variation of
$h$ with index $p=\beta/\nu>1$. The function in
\eqref{eq:actualM} is nondecreasing near zero because both $h$ and
$m(t)=rK-qe^{x_0-t/\Lambda}$ are nondecreasing. It has index $\beta$.
Normalize \eqref{eq:axisQ} by $\nu/m_0$, and denote the resulting
potential by $\overline Q$. Its thin-layer approximation is $P_M$.
Write $T=\overline Q-P_M$.

If $|x-t|$ is larger than a sufficiently large multiple of $h(t)$,
Taylor expansion in $y/|x-t|$, including the fixed shear, gives
\begin{equation}\label{eq:thicknesskernel}
 \left|\int_0^{h(t)}y^{\nu-1}
 [K_c(x-t-b_\rho y,y)-k(x-t)] dy\right|
 \le C\frac{h(t)^{\nu+1}}{|x-t|^{\nu+1}}.
\end{equation}
After $j$ derivatives in $x$ the denominator has power $\nu+1+j$.
In particular $T(0),T'(0)$ are finite, because
$\int_0^R h(t)^{\nu+1}t^{-\nu-2} dt<\infty$ when $p>1$.

Split the Taylor remainder of $T$ into $t<x/2$, $x/2<t<2x$,
and $t>2x$. In the middle interval, $h(t)\asymp h(x)$.
On $|t-x|\le Ch(x)$ both the thick and thin potentials have
integral $O(h(x))$, since $\nu<1$. On the remainder of this interval,
\eqref{eq:thicknesskernel} integrates to $O(h(x))$ as well.
The subtracted values at zero on this interval are bounded by
$Ch(x)^{\nu+1}x^{-\nu}$. The first interval contributes at most
the same bound, including its zero and first-order Taylor terms. For
this, use Potter bounds in the integrals
$\int_0^x h(t)^{\nu+1}t^{-\nu-1} dt$ and
$x\int_0^x h(t)^{\nu+1}t^{-\nu-2} dt$.
The last interval contributes at most
$Cx^2\int_{2x}^R h(t)^{\nu+1}t^{-\nu-3} dt$.
Consequently, for arbitrarily small $\epsilon>0$,
\begin{equation}\label{eq:thicknessremainder}
 |T(x)-T(0)-xT'(0)|\le C_\epsilon
 \left(x^{p-\epsilon}+
           x^{\min\{p(\nu+1)-\nu,2\}-\epsilon}\right).
\end{equation}
Both exponents exceed $\beta+\alpha$: the respective positive gaps
before subtracting $\epsilon$ are
$(p-1)(1-\nu)$, $p-1$, and $1+\nu-\beta$.

For all sufficiently small $x>0$, chosen with $2x<R$,
$\overline Q$ equals a $C^{1,\lambda}$ function for every
$\lambda<1$. Choose $\lambda>\beta-\nu$. Its Taylor remainder
has order $x^{1+\lambda}$, again strictly higher than
$x^{\beta+\alpha}$. Since $P_M=\overline Q-T$,
\eqref{eq:thicknessremainder} proves the hypothesis of
Lemma~\ref{lem:mellinamplitude}. Hence
$h(t)\sim A^{1/\nu}t^{\beta/\nu}$. Inverting with $d(v)=t/\Lambda$
proves \eqref{eq:subcriticalrate}.

Changing $R$ adds to $M$ a function supported away from zero.
Its Mellin transform is analytic near $\beta$. By
\eqref{eq:mellinproduct} its contribution to $\mathcal R(\beta)$
is zero because $\Theta(\beta)=0$. This also verifies cutoff
independence directly in \eqref{eq:amplitudeformula}.
\end{proof}

\begin{proof}[Proof of Theorem~\ref{thm:sharp}]
Propositions~\ref{prop:superrate}, \ref{prop:criticalrate}, and
\ref{prop:subcriticalrate} give the three formulas. Continuity gives
$B-b(v)=B(1-e^{-d(v)})\sim Bd(v)$.
\end{proof}

We close with remarks on regularity and open questions. For $0<\nu<1$, the sharp power law gives a necessary condition for
H\"older regularity of the tangent. Indeed,
\[
 \gamma(\delta)\sim A^{\frac1\nu}\Lambda^{\frac\beta\nu}
                      \delta^{\frac\beta\nu}.
\]
If $\gamma\in C^{1,\tau}([0,\delta_0))$, with $0<\tau\le1$, then
$\gamma(0)=\gamma'(0)=0$ gives
$\gamma(\delta)=O(\delta^{1+\tau})$. Consequently
\[
 \tau\le\frac\beta\nu-1.
\]
This improves the necessary condition in Corollary~\ref{cor:noholder}.
The same restriction holds for any regular $C^{1,\tau}$
parametrization, by the change of parameter used in the proof of
Corollary~\ref{cor:boundarycurve}. This argument establishes only necessity.

The asymptotic equivalents determine the boundary gap and its leading
coefficient. They cannot be differentiated without further estimates.
In particular, a sharp equivalent for $b'(v)$ or $\gamma'(t)$ remains
a separate problem. Theorem~\ref{thm:endpointregularity} gives
continuity of the inverse tangent and $b'(v)\to-\infty$, but its
modulus need not be optimal. For $0<\nu<1$, we do not determine the
optimal H\"older regularity of the tangent. The power law for the
boundary position alone does not provide a two-point estimate for its
derivative. In this regime the coefficient in
\eqref{eq:amplitudeformula} depends on the global solution. A formula
in the model parameters alone, or a stable numerical method for this
coefficient, would provide additional information.
The finite-maturity problem introduces a time derivative and a moving
endpoint. The axis selection argument developed here does not directly
settle that problem.

\medskip
\noindent\textbf{Funding.} Takwon Kim is supported by the National
Research Foundation of Korea (NRF) grant funded by the Korea government
[Grant No. RS-2024-00351151]. Junkee Jeon is supported by the National
Research Foundation of Korea (NRF) grant funded by the Korea government
[Grant No. RS-2026-25591924]. Jehan Oh is supported by the National
Research Foundation of Korea (NRF) grant funded by the Korea government
[Grant Nos. RS-2025-00555316 and RS-2025-25415411].

\medskip
\noindent\textbf{Disclosure statement.} The authors report there are no
competing interests to declare.

\medskip
\noindent\textbf{Data availability.} No data sets were generated or
analyzed in this study. The code that produced Figure~\ref{fig:explicitcusp}
is available from the corresponding author upon reasonable request.

\medskip
\noindent\textbf{Declaration of generative AI use.} Large language model
tools were used to check the arguments and suggest corrections, to check
the bibliographic data of the cited works, and to correct the grammar of
the text. The authors verified every statement, proof and reference.
The authors assume responsibility for all content.

\appendix

\section{An explicit logarithmic contact curve}\label{sec:explicitcusp}

The following observation concerns the value function, rather than the
differentiability of the boundary $b$ or its inverse.

\begin{proposition}\label{prop:c1obstruction}
The function $w$ cannot have a $C^1$ extension to a neighborhood of
$(x_0,0)$ in the closed half-plane.
\end{proposition}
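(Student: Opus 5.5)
We argue by contradiction: suppose $w$ has a $C^1$ extension $\widetilde w$ to a neighborhood $N$ of $(x_0,0)$ in the closed half-plane. We will compare the gradient of $\widetilde w$ at $(x_0,0)$ with two different limits. These limits are obtained from the contact set and from the nondegeneracy of the excess value at the threshold.

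\emph{First limit: the contact set.} By Theorem~\ref{thm:main}, the contact set near $(x_0,0)$ is the region $\{x\le x_0-d(y)\}$, and it accumulates at $(x_0,0)$ from the left. At every contact point with $y>0$ we have $w=0$ and $\nabla w=0$, because $w\ge0$ attains its minimum there and is $C^1$ at positive variance by the local $W^{2,p}$ regularity. The points $(x_0-d(y)-\epsilon,y)$ lie in the contact set and tend to $(x_0,0)$ as $y,\epsilon\searrow0$. Continuity of $\nabla\widetilde w$ therefore forces $\nabla\widetilde w(x_0,0)=0$.

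\emph{Second limit: the variance derivative.} Lemma~\ref{lem:contactnondegenerate} gives $w(x_0,v)=R(B,v)\ge c_*v$ for small $v$. We also have $w(x_0,0)=R(B,0)=0$, since $B=b(0)$ is a contact point and $R$ is continuous. Hence
\[
 \frac{\widetilde w(x_0,v)-\widetilde w(x_0,0)}{v}\ge c_*>0 .
\]
The left side converges to $\partial_y\widetilde w(x_0,0)$ as $v\searrow0$. A $C^1$ extension has a one-sided derivative in $y$ at a boundary point, and this derivative equals the limit of the interior gradients. Thus $\partial_y\widetilde w(x_0,0)\ge c_*$, contradicting the first limit.

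\emph{Where care is needed.} The only delicate point is what ``$C^1$ extension up to the axis'' means. We take it to mean that $\nabla w$, defined for $y>0$, extends continuously to $y=0$. The mean value theorem along vertical segments then identifies $\partial_y\widetilde w(x_0,0)$ with the limit of the difference quotients above.

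For a more quantitative version, one can instead use the blowup limits of Proposition~\ref{prop:compact}. If $\nabla w$ were continuous at $(x_0,0)$ with value zero, then $W_\varepsilon\to0$ locally uniformly. This contradicts $W_\varepsilon(0,1)\ge c_*$, which is exactly the failure of the nondegeneracy used in Lemma~\ref{lem:contactnondegenerate}.

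I expect no serious obstacle in this argument. The main thing to watch is that the contact points used in the first limit genuinely approach $(x_0,0)$ while staying inside the contact set. Continuity of $b$ at zero (Proposition~\ref{prop:upper}) guarantees this.
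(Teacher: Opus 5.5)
Your proof is correct. Its first half is exactly the paper's: contact points with positive variance accumulate at $(x_0,0)$, so a continuous extension of $\nabla w$ must vanish there.

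The difference lies in how you produce the contradiction. You invoke the lower bound $R(B,v)\ge c_*v$ of Lemma~\ref{lem:contactnondegenerate} and read it as a vertical difference quotient along $x=x_0$. This is legitimate. One has $w(x_0,0)=R(B,0)=0$, since $B=b(0)\in(0,K)$ lies in the closed zero-variance contact set. The mean value theorem on $[0,v]$ then identifies the quotient with $\partial_y w(x_0,\theta v)$, which tends to $\partial_y\widetilde w(x_0,0)=0$.

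The paper instead uses only that a zero gradient at $(x_0,0)$ forces $W_\varepsilon\to0$ locally uniformly. It then tests the rescaled equation \eqref{eq:scaledpde} against a nonnegative bump supported in $\{X>0,Y>0\}$. The left side tends to zero, while the right side tends to $f_0$ times the integral of the bump, which is positive.

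The paper's route is self-contained: it needs only the equation in the continuation region and $f_0>0$. Yours imports Lemma~\ref{lem:contactnondegenerate}, whose proof already requires the $W^{2,p}$ compactness of Proposition~\ref{prop:compact}, Lemma~\ref{lem:flat}, and variance monotonicity. In exchange, your version locates the obstruction concretely. The vertical quotient at $x=x_0$ stays between $c_*$ and $C$, which is the same mechanism shown by $(w_*)_y(0,y)\to\tfrac12$ in the explicit example of the appendix.
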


\begin{proof}
By continuity of $b$, interior contact points with positive variance
approach $(x_0,0)$. The gradient of $w$ is zero at these points. If the
asserted extension existed, continuity of its gradient would give
$\nabla w(x_0,0)=0$. Since $w(x_0,0)=0$, differentiability would imply
$W_\varepsilon\to0$ uniformly on each compact subset of the rescaled
closed half-plane.

Choose a nonnegative, nonzero smooth test function supported in
$\{X>0,Y>0\}$. On its support, \eqref{eq:scaledpde} holds for small
$\varepsilon$. Move all derivatives onto the test function and let
$\varepsilon\to0$. The left-hand side tends to zero, whereas the
right-hand side tends to $f_0$ times the integral of the test function.
This contradicts $f_0>0$.
\end{proof}

This does not rule out smooth fit in the stock variable alone, and it
does not decide the regularity of $\gamma$. The weighted estimates and
joint gradient continuity are distinct assertions. The following local
example makes this distinction explicit without using the global option
value. It also checks the logarithmic coefficient when $\nu=2$ and
$c=0$.

In a sufficiently small upper half-disc, use $0<\arg\zeta<\pi$ and set
\[
 F(\zeta)=C\zeta-\frac\zeta\pi\log\zeta,\qquad C>0.
\]
Choose the radius so that $\operatorname{Re}F'\ge m_*>0$.
The map extends continuously to the closed half-disc, with $F(0)=0$.
For distinct points $\zeta_1,\zeta_2$ in its closure, integration along
their segment gives
\[
 \operatorname{Re}\frac{F(\zeta_2)-F(\zeta_1)}{\zeta_2-\zeta_1}
 =\int_0^1\operatorname{Re}F'((1-t)\zeta_1+t\zeta_2) dt
 \ge m_*.
\]
On the real axis derivatives are upper traces. If the segment passes
through zero, the formula follows by an improper integral, since the
logarithmic singularity is integrable. Thus $F$ is injective on the
closure and is a homeomorphism onto its image there. Write $G=F^{-1}$.
The positive real axis maps to the positive real axis. The upper
trace of the negative axis has image
\begin{equation}\label{eq:exactcusp}
 x=-t(v),\quad y=v,\qquad
 t(v)=Cv+\frac v\pi\log\frac1v.
\end{equation}
The outer semicircle has image separated from zero. Thus in a small
half-disc about zero, the image is the region to the right of this
curve. Denote it by $\mathcal O_+$.

On $\mathcal O_+$ define
\[
 u(x,y)=\frac{y^2}{2}+\operatorname{Im}\left[
 \frac{G(x+iy)^2}{2}
 \left(C-\frac{\log G(x+iy)}\pi-\frac1{2\pi}\right)\right].
\]
Differentiating the primitive gives
\[
 u_x=\operatorname{Im}G,\qquad u_y=y+\operatorname{Re}G,
 \qquad \Delta u=1.
\]
On the curve \eqref{eq:exactcusp}, $G=-v$ and $y=v$, so
$u=0$ and $\nabla u=0$. Also $u=0$ on the positive axis.
Since $u_x>0$ inside, $u>0$ in $\mathcal O_+$. Extend $u=0$ to
the region on the other side of the curve. The bound
$|G'|=|1/F'|\le C_1$ shows that $u$ is locally $C^{1,1}$,
including across the free boundary. Consequently
$\Delta u=\chi_{\{u>0\}}$ in distributions.

Put $w_*=u/y$ for $y>0$. Then
\begin{equation}\label{eq:cuspoperator}
 \left[\frac y2(\partial_{xx}+\partial_{yy})+\partial_y\right]w_*
                         =\frac12\chi_{\{w_*>0\}}.
\end{equation}
Its boundary has exactly the coefficient $1/\pi$ in
\eqref{eq:sharpsuper} with $\sigma=1$, $\rho=c=0$, and $\nu=2$.

The functions $w_*$, $Dw_*$, and $yD^2w_*$ are locally bounded
up to the axis. To verify the first derivatives, write
$G=A+iB=re^{i\theta}$ and $L=C-\log r/\pi$. The imaginary part
of $F(G)$ gives $y=BL-A\theta/\pi$. If $A\ge0$, then
$A\theta\le B$. If $A<0$, the last term is nonnegative. Hence
$y\ge B(L-1/\pi)$ and $(w_*)_x=B/y$ is bounded.
Let $g(x)$ be the contact height, with $g(x)=0$ for $x\ge0$.
Since $u(x,g(x))=0$ and $g(x)u_y(x,g(x))=0$,
\[
 yu_y-u=\int_{g(x)}^y t u_{yy}(x,t) dt.
\]
Boundedness of $u_{yy}$ gives bounded $(w_*)_y$. Finally,
\[
 y(w_*)_{xx}=u_{xx},\quad
 y(w_*)_{xy}=u_{xy}-(w_*)_x,\quad
 y(w_*)_{yy}=u_{yy}-2(w_*)_y.
\]
The bounded gradient gives a Lipschitz extension of $w_*$ to the axis.

Nevertheless this gradient is not jointly continuous at zero.
At $z=iy$, the real part of $F(G(z))=iy$ gives
$A=-B\theta/(\pi L)=o(y)$, whereas $u_y(0,y)=y+A=y+o(y)$.
Thus
\[
 w_*(0,y)=\tfrac12y+o(y),\qquad
 (w_*)_y(0,y)=\frac{u_y(0,y)}y-\frac{u(0,y)}{y^2}
                         \longrightarrow\tfrac12.
\]
Interior contact points approach zero with $Dw_*=0$. A joint
$C^1$ extension would therefore have zero gradient at zero,
contradicting the displayed vertical quotient.

Figure~\ref{fig:explicitcusp} shows the contact geometry and the gradient
along two paths to the origin. The contact path is $(-2t(y),y)$, and it
lies strictly inside the contact set. Along the vertical path $(0,y)$,
the derivative $(w_*)_y$ instead tends to $1/2$.

\begin{figure}[t]
\centering
\begin{minipage}[t]{0.49\linewidth}
\centering
\begin{tikzpicture}
\begin{axis}[
 width=0.97\linewidth,height=5.8cm,
 xmin=-30,xmax=12,ymin=0,ymax=11,
 xlabel={$10^3x$},ylabel={$10^3y$},
 xtick={-30,-15,0,10},ytick={0,5,10},
 title={(a) Contact geometry},title style={font=\small},
 tick label style={font=\footnotesize},label style={font=\small},
 axis line style={black!65},tick style={black!65},
 clip=false]
\addplot[draw=none,fill=black!9] coordinates {
(-30,0) (0,0) (-7.51646128e-05,1.1e-05) (-0.000543069104,8.8e-05)
(-0.0017178626,0.000297) (-0.00387857044,0.000704) (-0.00728233911,0.001375) (-0.0121702102,0.002376)
(-0.018770446,0.003773) (-0.0273007045,0.005632) (-0.0379695777,0.008019) (-0.050977738,0.011)
(-0.0665188258,0.014641) (-0.0847801575,0.019008) (-0.105943304,0.024167) (-0.130184573,0.030184)
(-0.157675413,0.037125) (-0.188582763,0.045056) (-0.223069355,0.054043) (-0.261293977,0.064152)
(-0.303411703,0.075449) (-0.349574106,0.088) (-0.399929435,0.101871) (-0.454622787,0.117128)
(-0.513796254,0.133837) (-0.577589064,0.152064) (-0.646137701,0.171875) (-0.719576022,0.193336)
(-0.79803536,0.216513) (-0.881644626,0.241472) (-0.970530393,0.268279) (-1.06481698,0.297)
(-1.16462654,0.327701) (-1.27007912,0.360448) (-1.38129273,0.395307) (-1.49838341,0.432344)
(-1.62146529,0.471625) (-1.75065065,0.513216) (-1.88604997,0.557183) (-2.02777198,0.603592)
(-2.17592369,0.652509) (-2.33061046,0.704) (-2.49193605,0.758131) (-2.66000262,0.814968)
(-2.8349108,0.874577) (-3.01675973,0.937024) (-3.20564708,1.002375) (-3.40166907,1.070696)
(-3.60492054,1.142053) (-3.81549495,1.216512) (-4.03348442,1.294139) (-4.25897976,1.375)
(-4.49207049,1.459161) (-4.73284487,1.546688) (-4.98138992,1.637647) (-5.23779147,1.732104)
(-5.50213412,1.830125) (-5.77450134,1.931776) (-6.05497543,2.037123) (-6.34363758,2.146232)
(-6.64056788,2.259169) (-6.9458453,2.376) (-7.25954778,2.496791) (-7.58175218,2.621608)
(-7.91253435,2.750517) (-8.2519691,2.883584) (-8.60013027,3.020875) (-8.95709069,3.162456)
(-9.32292222,3.308393) (-9.69769578,3.458752) (-10.0814813,3.613599) (-10.474348,3.773)
(-10.8763637,3.937021) (-11.2875959,4.105728) (-11.7081109,4.279187) (-12.1379741,4.457464)
(-12.57725,4.640625) (-13.0260026,4.828736) (-13.4842947,5.021863) (-13.9521883,5.220072)
(-14.4297448,5.423429) (-14.9170246,5.632) (-15.4140873,5.845851) (-15.9209919,6.065048)
(-16.4377963,6.289657) (-16.964558,6.519744) (-17.5013336,6.755375) (-18.0481787,6.996616)
(-18.6051485,7.243533) (-19.1722974,7.496192) (-19.7496789,7.754659) (-20.337346,8.019)
(-20.9353508,8.289281) (-21.5437449,8.565568) (-22.1625789,8.847927) (-22.7919031,9.136424)
(-23.4317669,9.431125) (-24.0822191,9.732096) (-24.7433077,10.039403) (-25.4150801,10.353112)
(-26.0975833,10.673289) (-26.7908633,11) (-30,11)
} \closedcycle;
\addplot[hestonsuper,very thick,no marks] coordinates {
(0,0) (-7.51646128e-05,1.1e-05) (-0.000543069104,8.8e-05) (-0.0017178626,0.000297)
(-0.00387857044,0.000704) (-0.00728233911,0.001375) (-0.0121702102,0.002376) (-0.018770446,0.003773)
(-0.0273007045,0.005632) (-0.0379695777,0.008019) (-0.050977738,0.011) (-0.0665188258,0.014641)
(-0.0847801575,0.019008) (-0.105943304,0.024167) (-0.130184573,0.030184) (-0.157675413,0.037125)
(-0.188582763,0.045056) (-0.223069355,0.054043) (-0.261293977,0.064152) (-0.303411703,0.075449)
(-0.349574106,0.088) (-0.399929435,0.101871) (-0.454622787,0.117128) (-0.513796254,0.133837)
(-0.577589064,0.152064) (-0.646137701,0.171875) (-0.719576022,0.193336) (-0.79803536,0.216513)
(-0.881644626,0.241472) (-0.970530393,0.268279) (-1.06481698,0.297) (-1.16462654,0.327701)
(-1.27007912,0.360448) (-1.38129273,0.395307) (-1.49838341,0.432344) (-1.62146529,0.471625)
(-1.75065065,0.513216) (-1.88604997,0.557183) (-2.02777198,0.603592) (-2.17592369,0.652509)
(-2.33061046,0.704) (-2.49193605,0.758131) (-2.66000262,0.814968) (-2.8349108,0.874577)
(-3.01675973,0.937024) (-3.20564708,1.002375) (-3.40166907,1.070696) (-3.60492054,1.142053)
(-3.81549495,1.216512) (-4.03348442,1.294139) (-4.25897976,1.375) (-4.49207049,1.459161)
(-4.73284487,1.546688) (-4.98138992,1.637647) (-5.23779147,1.732104) (-5.50213412,1.830125)
(-5.77450134,1.931776) (-6.05497543,2.037123) (-6.34363758,2.146232) (-6.64056788,2.259169)
(-6.9458453,2.376) (-7.25954778,2.496791) (-7.58175218,2.621608) (-7.91253435,2.750517)
(-8.2519691,2.883584) (-8.60013027,3.020875) (-8.95709069,3.162456) (-9.32292222,3.308393)
(-9.69769578,3.458752) (-10.0814813,3.613599) (-10.474348,3.773) (-10.8763637,3.937021)
(-11.2875959,4.105728) (-11.7081109,4.279187) (-12.1379741,4.457464) (-12.57725,4.640625)
(-13.0260026,4.828736) (-13.4842947,5.021863) (-13.9521883,5.220072) (-14.4297448,5.423429)
(-14.9170246,5.632) (-15.4140873,5.845851) (-15.9209919,6.065048) (-16.4377963,6.289657)
(-16.964558,6.519744) (-17.5013336,6.755375) (-18.0481787,6.996616) (-18.6051485,7.243533)
(-19.1722974,7.496192) (-19.7496789,7.754659) (-20.337346,8.019) (-20.9353508,8.289281)
(-21.5437449,8.565568) (-22.1625789,8.847927) (-22.7919031,9.136424) (-23.4317669,9.431125)
(-24.0822191,9.732096) (-24.7433077,10.039403) (-25.4150801,10.353112) (-26.0975833,10.673289)
(-26.7908633,11)
};
\addplot[black!65,densely dashed,thick,->,no marks] coordinates {
(-22.0602848,4) (-21.6144584,3.90875) (-21.1672757,3.8175) (-20.7187043,3.72625)
(-20.2687103,3.635) (-19.8172577,3.54375) (-19.3643092,3.4525) (-18.9098252,3.36125)
(-18.4537639,3.27) (-17.9960813,3.17875) (-17.5367309,3.0875) (-17.0756634,2.99625)
(-16.6128264,2.905) (-16.1481645,2.81375) (-15.6816182,2.7225) (-15.2131246,2.63125)
(-14.742616,2.54) (-14.2700199,2.44875) (-13.7952587,2.3575) (-13.3182484,2.26625)
(-12.8388984,2.175) (-12.3571105,2.08375) (-11.8727779,1.9925) (-11.385784,1.90125)
(-10.8960009,1.81) (-10.4032879,1.71875) (-9.90748934,1.6275) (-9.40843202,1.53625)
(-8.90592215,1.445) (-8.39974142,1.35375) (-7.88964204,1.2625) (-7.3753403,1.17125)
(-6.85650817,1.08) (-6.33276197,0.98875) (-5.80364698,0.8975) (-5.26861552,0.80625)
(-4.72699525,0.715) (-4.17794094,0.62375) (-3.62035769,0.5325) (-3.05277049,0.44125)
(-2.47308289,0.35)
};
\draw[hestoncritical,dashed,thick,->]
 (axis cs:0,9) -- (axis cs:0,1.2);
\node[font=\scriptsize] at (axis cs:-25.5,6.8) {contact};
\node[font=\scriptsize] at (axis cs:-5,10) {continuation};
\node[font=\scriptsize,anchor=west,text=hestoncritical]
 at (axis cs:1,5.5) {$(0,y)$};
\node[font=\scriptsize,anchor=west,text=black!70]
 at (axis cs:-27,1.9) {$(-2t(y),y)$};
\fill (axis cs:0,0) circle[radius=1.4pt];
\draw[black,thick,->] (axis cs:0,0) -- (axis cs:10,0);
\node[font=\scriptsize,anchor=south east]
 at (axis cs:11,0.3) {tangent};
\end{axis}
\end{tikzpicture}
\end{minipage}\hfill
\begin{minipage}[t]{0.49\linewidth}
\centering
\begin{tikzpicture}
\begin{axis}[
 width=0.97\linewidth,height=5.8cm,
 xmin=0,xmax=80,ymin=-0.025,ymax=0.56,
 xlabel={$-\log_{10}y$},ylabel={First derivatives},
 xtick={0,20,40,60,80},ytick={0,0.25,0.5},
 title={(b) Two approaches},title style={font=\small},
 tick label style={font=\footnotesize},label style={font=\small},
 axis line style={black!65},tick style={black!65},
 grid=major,grid style={black!10},
 legend style={at={(0.98,0.68)},anchor=north east,
  draw=none,fill=white,font=\scriptsize,cells={anchor=west}},
 no marks]
\addplot[black!40,densely dotted,forget plot] coordinates {(0,0.5) (80,0.5)};
\addplot[hestonsuper,very thick] coordinates {
(2,0.343266105) (2.25,0.321696525) (2.5,0.302672328) (2.75,0.285780479)
(3,0.270688199) (3.25,0.257126038) (3.5,0.24487436) (3.75,0.233752826)
(4,0.223612249) (4.25,0.214328311) (4.5,0.205796676) (4.75,0.197929197)
(5,0.190650923) (5.25,0.183897741) (5.5,0.177614504) (5.75,0.171753519)
(6,0.166273341) (6.25,0.16113778) (6.5,0.156315098) (6.75,0.151777346)
(7,0.147499817) (7.25,0.143460585) (7.5,0.139640131) (7.75,0.136021014)
(8,0.132587608) (8.25,0.129325869) (8.5,0.126223137) (8.75,0.123267975)
(9,0.120450019) (9.25,0.117759858) (9.5,0.115188926) (9.75,0.112729409)
(10,0.110374165) (11,0.101872819) (12,0.0946015418) (13,0.0883098591)
(14,0.0828112032) (15,0.0779637125) (16,0.0736575701) (17,0.0698064196)
(18,0.0663414044) (19,0.0632069393) (20,0.0603576527) (21,0.0577561392)
(22,0.0553712806) (23,0.0531769759) (24,0.0511511668) (25,0.0492750829)
(26,0.0475326495) (27,0.0459100201) (28,0.0443952018) (29,0.042977755)
(30,0.0416485494) (31,0.0403995643) (32,0.0392237249) (33,0.0381147665)
(34,0.0370671216) (35,0.0360758256) (36,0.0351364368) (37,0.0342449695)
(38,0.0333978367) (39,0.0325918015) (40,0.0318239351) (41,0.0310915815)
(42,0.0303923256) (43,0.0297239672) (44,0.029084497) (45,0.0284720768)
(46,0.0278850212) (47,0.0273217823) (48,0.0267809361) (49,0.0262611697)
(50,0.0257612713) (51,0.0252801203) (52,0.0248166791) (53,0.0243699853)
(54,0.0239391451) (55,0.0235233273) (56,0.023121758) (57,0.0227337155)
(58,0.0223585262) (59,0.0219955605) (60,0.0216442294) (61,0.0213039811)
(62,0.0209742984) (63,0.0206546956) (64,0.0203447164) (65,0.0200439319)
(66,0.0197519381) (67,0.0194683544) (68,0.0191928219) (69,0.018925002)
(70,0.0186645748) (71,0.0184112376) (72,0.0181647044) (73,0.0179247042)
(74,0.0176909803) (75,0.0174632892) (76,0.0172413999) (77,0.0170250931)
(78,0.0168141604) (79,0.0166084036) (80,0.0164076342)
};
\addlegendentry{$(w_*)_x(0,y)$}
\addplot[hestoncritical,very thick,dashed] coordinates {
(2,0.461705072) (2.25,0.46704097) (2.5,0.471323231) (2.75,0.474812672)
(3,0.477694281) (3.25,0.480102104) (3.5,0.482135145) (3.75,0.483867794)
(4,0.485356819) (4.25,0.486646146) (4.5,0.487770197) (4.75,0.48875626)
(5,0.489626192) (5.25,0.490397671) (5.5,0.491085118) (5.75,0.4917004)
(6,0.492253348) (6.25,0.492752167) (6.5,0.493203751) (6.75,0.493613923)
(7,0.493987632) (7.25,0.494329107) (7.5,0.494641977) (7.75,0.494929371)
(8,0.495194003) (8.25,0.495438229) (8.5,0.49566411) (8.75,0.49587345)
(9,0.496067837) (9.25,0.49624867) (9.5,0.496417189) (9.75,0.496574494)
(10,0.496721566) (11,0.497223689) (12,0.497617911) (13,0.497933205)
(14,0.498189406) (15,0.49840047) (16,0.498576451) (17,0.498724745)
(18,0.498850892) (19,0.498959107) (20,0.499052647) (21,0.49913406)
(22,0.499205362) (23,0.499268166) (24,0.499323774) (25,0.499373249)
(26,0.499417464) (27,0.499457141) (28,0.499492881) (29,0.499525191)
(30,0.499554497) (31,0.499581161) (32,0.499605492) (33,0.499627756)
(34,0.49964818) (35,0.499666963) (36,0.499684275) (37,0.499700268)
(38,0.499715072) (39,0.499728802) (40,0.49974156) (41,0.499753436)
(42,0.499764509) (43,0.499774851) (44,0.499784524) (45,0.499793585)
(46,0.499802085) (47,0.499810069) (48,0.499817579) (49,0.49982465)
(50,0.499831317) (51,0.49983761) (52,0.499843557) (53,0.499849182)
(54,0.499854508) (55,0.499859556) (56,0.499864346) (57,0.499868894)
(58,0.499873216) (59,0.499877328) (60,0.499881243) (61,0.499884972)
(62,0.499888529) (63,0.499891922) (64,0.499895163) (65,0.49989826)
(66,0.499901221) (67,0.499904055) (68,0.499906768) (69,0.499909368)
(70,0.49991186) (71,0.49991425) (72,0.499916544) (73,0.499918748)
(74,0.499920865) (75,0.4999229) (76,0.499924858) (77,0.499926741)
(78,0.499928555) (79,0.499930302) (80,0.499931986)
};
\addlegendentry{$(w_*)_y(0,y)$}
\addplot[black!65,thick,dashdotted] coordinates {(2,0) (80,0)};
\addlegendentry{contact: both zero}
\end{axis}
\end{tikzpicture}
\end{minipage}
\caption{The explicit local example with $C=1$.
(a) The exact curve $x=-t(y)$ from \eqref{eq:exactcusp}, with the
contact set shaded. Both marked paths approach the origin, whose
boundary tangent is horizontal. The axes have different display scales.
(b) First derivatives evaluated from the conformal inverse $G$ along
$(0,y)$, compared with their zero values on $(-2t(y),y)$.
The dotted line marks $1/2$. This is a local example with a prescribed
source, not a numerical solution of the perpetual put problem.}
\label{fig:explicitcusp}
\end{figure}

We now compare the example with the weighted regularity theorem
of~\cite{DF}. The example can be written using the full Heston operator.
Choose $\sigma=1$, $\rho=0$, $\kappa=\theta=1$, and $r=q>0$.
Then $\Lop=\frac y2\Delta+\partial_y+T$, where
$T=-\frac y2\partial_x-y\partial_y-r$.
For $A_{\rm op}=-\Lop$, obstacle zero, and source
$f_*=-\frac12-Tw_*$, equation \eqref{eq:cuspoperator} gives
\[
 \min\{A_{\rm op}w_*-f_*,w_*\}=0.
\]
Here the equality holds almost everywhere: its first entry is zero in
continuation and equals $1/2$ in the contact set.

Take $V=B_{R_0}(0)\cap\HH$ with $R_0$ small enough that the construction
holds on its closure. We check the hypotheses of
\cite[Theorem 1.1]{DF} on this half-disc. The obstacle is zero, hence
belongs to $C_s^{2+\lambda}(\overline V)$ for every $0<\lambda<1$,
and the interest rate is positive. The weighted bounds above make
$Tw_*$ Lipschitz: the weak derivatives of $y(w_*)_x$ and $y(w_*)_y$
are bounded, and these two functions extend by zero on the axis.
Thus $f_*$ is Lipschitz on $\overline V$. If $s(z,z')$ is the cycloidal
distance of \cite[(2.1)]{DF} and
$\ell=|x-x'|+|y-y'|$, boundedness of $V$ gives
\[
 \ell=s(z,z')\bigl(\sqrt y+\sqrt{y'}+\sqrt\ell\bigr)
       \le C_Vs(z,z').
\]
It follows that $f_*\in C_s^\lambda(\overline V)$ for every
$0<\lambda<1$.

For these parameters, the weight in \cite[Definition 2.1]{DF} is
$ye^{-\gamma_0|x|-2y}$ for a fixed $\gamma_0>0$.
It is comparable to $y$ on $V$. Since $w_*$, $Dw_*$ and $yD^2w_*$
are bounded, the weighted Sobolev norm of $w_*$ satisfies
\begin{align*}
 &
 \int_V\left(y^2|D^2w_*|^2+(1+y)^2|Dw_*|^2
          +(1+y)|w_*|^2\right)ye^{-\gamma_0|x|-2y} dx dy\\
 &\qquad\le C\int_V y dx dy<\infty.
\end{align*}
The weak derivatives here have no free-boundary measure, because $w_*$
and its gradient match zero across the contact curve for $y>0$.
Hence $w_*$ belongs to the weighted space $H^2(V,\cdot)$ of
\cite[Definition 2.1]{DF} and to $C(\overline V)$, as required.

However, $w_*$ is not $C^1$ at the origin, which lies in the relative
interior of the degenerate boundary of $V$. The estimate in
\cite[Section 5, (5.2)]{DF} bounds $|w_*|$, $|Dw_*|$ and
$y|D^2w_*|$ at positive variance, and all these bounds hold here. They
do not imply continuity of $Dw_*$ at the axis. The space
$C_s^{1,1}(\overline U)$ of \cite[Definition 2.2]{DF}, with
$U=B_{R_0/2}(0)\cap\HH$, also contains the requirement
$C^1(\overline U)$, and this requirement fails for $w_*$. This
comparison concerns a local obstacle problem with a prescribed
Lipschitz source. It does not identify $w_*$ with the global put value,
and it is consistent with the weighted derivative estimate itself.

\end{document}